\newcommand{\todo}[1]{\textcolor{red}{TODO: #1}}
\newcommand{\xl}[1]{\textcolor{magenta}{XL: #1}}
\newcommand{\JH}[1]{\textcolor{blue}{JH: #1}}
\pgfplotsset{compat=1.18}
\newcommand{\Mod}[1]{\ (\mathrm{mod}\ #1)}
\definecolor{uuuuuu}{rgb}{0.27,0.27,0.27}
\definecolor{sqsqsq}{rgb}{0.1255,0.1255,0.1255}
\newtheorem{definition}{Definition} [section]
\newtheorem{theorem}[definition]{Theorem}
\newtheorem{lemma}[definition]{Lemma}
\newtheorem{conjecture}[definition]{Conjecture}
\newtheorem{claim}[definition]{Claim}
\newtheorem{problem}[definition]{Problem}
\newtheorem{fact}[definition]{Fact}
\newcommand{\uproduct}{\mathbin{\;{\rotatebox{90}{\textnormal{$\small\Bowtie$}}}}}
\newcommand{\RNum}[1]{\uppercase\expandafter{\romannumeral #1\relax}}
\newcommand{\rep}[2]{{#1}^{(#2)}}
\newcommand{\blow}[2]{#1(\!(#2)\!)}
\newcommand{\multiset}[1]{\{\hspace{-0.25em}\{\hspace{0.1em}#1\hspace{0.1em}\}\hspace{-0.25em}\}}
\def\qed{\hfill \rule{4pt}{7pt}}
\def\pf{\noindent {\it Proof }}
\begin{document}
\title{\bf\Large Toward a rainbow Corr\'{a}di--Hajnal Theorem \RNum{1}}
\author{
Jinghua Deng, 
Jianfeng Hou, 
Caiyun Hu,
Xizhi Liu, 
\vspace{-1.5em} 
}
\date{\today}
\maketitle
\begin{abstract}
    We study an anti-Ramsey extension of the classical Corr\'{a}di--Hajnal Theorem: how many colors are needed to color the complete graph on $n$ vertices in order to guarantee a rainbow copy of $t K_{3}$, that is, $t$ vertex-disjoint triangles. 
    We provide a conjecture for large $n$, consisting of five classes of different extremal constructions, corresponding to five subintervals of $\left[1,\, \tfrac{n}{3}\right]$ for the parameter $t$. 
    In this work, we establish this conjecture for the first interval, $t \in \left[1,\, \tfrac{2n-6}{9}\right]$. 
    In particular, this improves upon a recent result of Lu--Luo--Ma~[arXiv:2506.07115] which established the case $t \le \tfrac{n - 57}{15}$. 
\end{abstract}

\renewcommand\thefootnote{} 
\footnotetext{\textit{Keywords:} anti-Ramsey problem, the Corr\'{a}di--Hajnal Theorem, rainbow triangle-tiling, stability method \\[0.15em]
\textit{MSC2020:} 05C35, 05C65, 05D05}
\addtocounter{footnote}{-1} 
\renewcommand\thefootnote{\arabic{footnote}} 
\section{Introduction}\label{SEC:Introduction}
\subsection{Definitions and main result}
For a graph $G$, let $V(G)$ and $E(G)$ denote its vertex set and edge set, respectively.
The size of $V(G)$ is denoted by $v(G)$. 
We use $|G|$ to denote the number of edges in $G$. 
The complete graph on $n$ vertices is denoted by $K_{n}$. 
An edge-colored graph $G$ is is said to be \emph{rainbow} if all its edges have distinct colors. 

Given a graph $F$ and an integer $t \ge 1$, let $t F$ denote the graph consisting of $t$ vertex-disjoint copies of $F$.
Motivated by the classical \emph{anti-Ramsey problem} initiated by Erd\H{o}s--Simonovits--S\'{o}s~\cite{ESS75}, as well as results on rainbow matchings by Schiermeyer~\cite{Sch04} and \"{O}zkahya--Young~\cite{OY13}, the following problem was recently proposed in~\cite{DHLY25} and systematically investigated for small values of $t$. 
\begin{problem}\label{PROB:antiRmasey-tF}
    Given a graph $F$ and a positive integer $n$. 
    Determine, for each $t \in \left[1, n/v(F)\right]$, the minimum value $N$, denoted by $\mathrm{ar}(n,tF)$, such that every edge-coloring of the complete graph $K_{n}$ using exactly $N$ colors contains a rainbow copy of $tF$. 
\end{problem}

Problem~\ref{PROB:antiRmasey-tF} is closely related to the classical Tur\'{a}n problem. 
Given a family $\mathcal{F}$ of graphs, we say a graph $H$ is \emph{$\mathcal{F}$-free} if it does not contain any member of $\mathcal{F}$ as a subgraph. 
The \emph{Tur\'{a}n number} $\mathrm{ex}(n, \mathcal{F})$ of $\mathcal{F}$ is the maximum number of edges in an $\mathcal{F}$-free graph on $n$ vertices. 
Determine the value of $\mathrm{ex}(n, \mathcal{F})$ for various families has been a central topic in Combinatorics since the seminal work of Tur\'{a}n~\cite{TU41}. 
Given a graph $F$, let $F_-$ denote the family of graphs obtained from $F$ by removing exactly one edge. 
For the case $t = 1$, it was observed by Erd\H{o}s--Simonovits--S\'{o}s~\cite{ESS75} that 
\begin{align*}
    \mathrm{ex}(n, F_-)+2 
    \le \mathrm{ar}(n, F) 
    \le \mathrm{ex}(n, F_-)+o(n^2).  
\end{align*}
%
%
The exact value of $\mathrm{ar}(n, F)$ is known only for a few classes of graph including the complete graph~\cite{ESS75,MN02}, paths~\cite{SS84}, cycles~\cite{ESS75,A83,MN05}, and trees~\cite{JW04}. 
We refer the reader to the survey~\cite{FMO10} for more results on this topic.

For $t \ge 2$, there are only a few results on Problem~\ref{PROB:antiRmasey-tF}. 
The observation of Erd\H{o}s--Simonovits--S\'{o}s was extended to general $t$ in~{\cite[Fact~2.1]{DHLY25}} that, for every $t \in \left[1, \tfrac{n}{v(F)}\right]$,  
\begin{align*}
    \mathrm{ex}(n, tF) + 2 
    \le \mathrm{ar}(n, (t+1)F) 
    \le \mathrm{ex}(n, (t+1)F) + 1.  
\end{align*}
A theorem of Jiang--Pikhurko~\cite{JP09} on doubly edge-critical graphs determines $\mathrm{ar}(n,2F)$ for large $n$ when $F$ is an edge-critical graph, where edge-critical means there exists an edge whose removal will decrease the chromatic number. 
The value of $\mathrm{ar}(n, tF)$ for relative small $t$ was systematically studied in~\cite{DHLY25} and determined for several classes of graphs and hypergraphs. 

Determining $\mathrm{ar}(n,(t + 1)F)$, even asymptotically, for the entire interval $[0, n/v(F)]$ appears to be a challenging problem in general. 
Partly due to the difficulty of determining the asymptotic value of $\mathrm{ex}(n, tF)$, which is known for very limited examples (see e.g.~\cite{GH12,ABHP15,HHLZ25,CLSW25}). 
It appears that $K_2$ stands as the only example for which Problem~\ref{PROB:antiRmasey-tF} has been completely solved~\cite{OY13,Sch04,CLT09,FKSS09}. 
A natural next step toward Problem~\ref{PROB:antiRmasey-tF} is naturally the case $F = K_{3}$.  
In this work, we investigate Problem~\ref{PROB:antiRmasey-tF} for the case when $F = K_3$. 
Below we provide a conjecture\footnote[1]{While preparing the draft, we learned that Liu--Ning--Tian~\cite{LNT25} independently proposed a similar conjecture. We include a discussion of the differences between the two conjectures in Section~\ref{SEC:remark}.} for the value of $\mathrm{ar}(n,tK_3)$ for large $n$.

\begin{conjecture}\label{Conj:Our-conjecture}
    Suppose that $n$ is sufficiently large and $t$ lies in the interval $\left[0, n/3\right]$. Then
    \begin{align*}
        \mathrm{ar}(n,(t+2)K_3) = \Xi(n,t)+2,
    \end{align*}
    where
    \begin{align*}
        \Xi(n,t) \coloneqq 
        \begin{cases}
            \binom{t}{2}+t(n-t)+\lceil\frac{n-t}{2}\rceil\lfloor\frac{n-t}{2}\rfloor &\text{if}\quad t\in\left[0,\frac{2n-6}{9}\right],\\[2ex]
            \binom{2t+1}{2}+\lceil\frac{n}{2}\rceil\lfloor\frac{n}{2}\rfloor &\text{if}\quad t\in\left[\frac{2n-6}{9},\frac{n-\sqrt{2n-3}-3}{4}\right],\\[2ex]
            \binom{2t+2}{2}+(2t + 2)(n-2t-2) &\text{if}\quad t\in\left[\frac{n-\sqrt{2n-3}-3}{4},\frac{ 5n+\sqrt{ 3n^2-2n+4}-20}{22}\right],\\[2ex]
            \binom{6t-n+6}{2}+(n-3t-3)(3t + 3) &\text{if}\quad t\in\left[\frac{5n+\sqrt{ 3n^2-2n+4}-20}{22},\frac{ 2n-\sqrt{16n-7}-3}{6}\right],\\[2ex]
            \binom{3t + 5}{2}+(n-3t-6) &\text{if}\quad t\in\left[\frac{ 2n-\sqrt{16n-7}-3}{6},\frac{n}{3}\right].
        \end{cases}
    \end{align*}
\end{conjecture}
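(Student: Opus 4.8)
The statement splits into a lower and an upper bound, and the first observation is that on the first interval $\Xi(n,t)$ equals the Tur\'{a}n number $\mathrm{ex}(n,(t+1)K_3)$, whose extremal graph is $H_0 := K_t \vee T_2(n-t)$ (a clique of size $t$ joined to a balanced complete bipartite graph on the remaining $n-t$ vertices), since $\binom{t}{2}+t(n-t)+\lceil\frac{n-t}{2}\rceil\lfloor\frac{n-t}{2}\rfloor=\binom{t}{2}+t(n-t)+\lfloor(n-t)^2/4\rfloor$. For the lower bound the plan is to invoke the general inequality $\mathrm{ex}(n,(t+1)K_3)+2\le \mathrm{ar}(n,(t+2)K_3)$ recorded in the excerpt: rainbow-colouring $H_0$ and giving all remaining edges one new colour yields $\Xi(n,t)+1$ colours with no rainbow $(t+2)K_3$, because any such copy would contain $t+1$ vertex-disjoint triangles inside $H_0$, contradicting that $H_0$ is $(t+1)K_3$-free. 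Hence $\mathrm{ar}(n,(t+2)K_3)\ge \Xi(n,t)+2$.

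The work lies in the matching upper bound: every colouring of $K_n$ with $N:=\Xi(n,t)+2$ colours contains a rainbow $(t+2)K_3$. Suppose not. I would pick one edge from each colour class to form a rainbow representing subgraph $R$ with $|E(R)|=N$. Writing $\nu_{\triangle}$ for the maximum number of vertex-disjoint triangles, $R$ cannot contain $(t+2)K_3$ (it is rainbow), so $\nu_{\triangle}(R)\le t+1$; and if $\nu_{\triangle}(R)\le t$ then $R$ is $(t+1)K_3$-free, forcing $|E(R)|\le \mathrm{ex}(n,(t+1)K_3)=\Xi(n,t)<N$, a contradiction. Thus \emph{every} representing subgraph satisfies $\nu_{\triangle}(R)=t+1$ exactly, while carrying only two more edges than $\mathrm{ex}(n,(t+1)K_3)$.

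The heart of the argument is a stability step turning this into near-extremal structure. I would show that $R$, and hence the colouring, must be within $o(n^2)$ edges of $H_0$: there is a partition $V(K_n)=A\cup B_1\cup B_2$ with $|A|$ close to $t$ and $B_1,B_2$ near-balanced, such that almost all edges inside $A$, between $A$ and $B_1\cup B_2$, and across $B_1,B_2$ receive pairwise distinct colours, while edges inside $B_1$ and inside $B_2$ use essentially one colour. A clean-up phase then upgrades this to the exact structure $|A|=t$ with the join-and-cross bipartite graph rainbow, and the crucial claim is that the edges interior to $B_1\cup B_2$ use \emph{at most one} colour. The contradiction is a colour count: the $t$ vertices of $A$ yield $t$ disjoint rainbow triangles through distinct apices on opposite sides of $B_1,B_2$, and a \emph{second} colour interior to $B_1\cup B_2$ would let me build two further vertex-disjoint, colour-disjoint rainbow triangles (each an interior edge plus an opposite-side apex), producing a rainbow $(t+2)K_3$; so only one interior colour is available and $N\le \Xi(n,t)+1$, contradicting $N=\Xi(n,t)+2$.

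The main obstacle is the stability step. Because $R$ sits only two edges above $\mathrm{ex}(n,(t+1)K_3)$ yet already realises $\nu_{\triangle}=t+1$, it is \emph{sparse} relative to $\mathrm{ex}(n,(t+2)K_3)$, so the off-the-shelf stability theorem for $(t+2)K_3$-free graphs does not apply directly; one instead needs a stability statement adapted to the regime ``$\nu_{\triangle}=t+1$ with about $\mathrm{ex}(n,(t+1)K_3)$ edges'', and must exploit the whole colouring by re-choosing representatives among colour classes of size at least two, rather than a single $R$. Converting the approximate partition into the exact one, and controlling colour collisions among the $O(t)=O(n)$ edges used to assemble the final triangles, is where the hypothesis $t\le \tfrac{2n-6}{9}$ enters: it guarantees that the balanced-bipartite construction is the genuine optimum in this range and leaves enough room in $B_1,B_2$ to route the two augmenting triangles around repeated colours.
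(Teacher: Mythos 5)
Your proposal does not prove the stated result. The statement is the full conjecture, covering five intervals of $t$ with five different extremal constructions, and your argument addresses only the first interval. Even there the lower bound is the only part you actually complete; for the fifth interval your template (``rainbow the extremal graph, give every remaining edge one new colour'') does not produce the claimed value $\Xi(n,t)+1$, because $E_5(n,t)$ is not a Tur\'{a}n-extremal graph of the form clique-join-something: the paper's matching colouring there assigns $n-3t-6$ \emph{distinct} extra colours to the edges outside the clique $K_{3t+5}$ according to $\min\{i,j\}$, and one must check separately that this admits no rainbow $(t+2)K_3$. For intervals two through five the upper bound is open even in the paper, which proves only the first interval (Theorem~\ref{THM:main-first-interval}).

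Restricted to the first interval, your outline has the right opening (a representative graph $R$ with $N=\Xi(n,t)+2$ edges must satisfy $\nu_\triangle(R)=t+1$, matching how the paper sets up its maximal tiling triple with $|\mathcal T|=t+1$), but the two load-bearing steps are announced rather than executed, and you yourself concede that off-the-shelf stability for $(t+2)K_3$-free graphs does not apply in this regime. The paper's actual route is concretely different: it runs the Allen--B\"{o}ttcher--Hladk\'{y}--Piguet tiling-triple machinery (the ideal partition $(\mathcal T_1,\dots,\mathcal T_4)$ and the function $h$) to get Theorem~\ref{THM:STABILITY}, then proves Theorem~\ref{THM:tripartite-graph} by exploiting the \emph{colouring} itself --- not a single representative graph --- to force a rainbow $K_{t_1-10,\,m+t_1-10,\,m+t_1-10}$ inside $H[V(\mathcal T_1)\cup V(\mathcal M)]$ (e.g.\ the analysis of $\chi(v_1v_2)$ in Claims~\ref{CLM:M-bipartite-case-xyz-T1}--\ref{CLAIM:number-T-case-T1}). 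Crucially, its endgame never upgrades to your exact structure $|A|=t$ with a single interior colour: instead it deletes a $(t-11)$-vertex part, checks that the remaining graph still has more than $\mathrm{ex}(n-(t-11),12K_3)+1$ edges, invokes Theorem~\ref{THM:Anti-ramsey-(t+2)K3-t-small} to extract a rainbow $13K_3$ there, and then routes a rainbow $(t-11)K_3$ through the tripartite part avoiding the $39$ colours already used. Your proposed ``two augmenting triangles from a second interior colour'' argument would still have to guarantee that all $3(t+2)$ edges carry distinct colours simultaneously, which is exactly the collision problem the paper's vertex-deletion trick is designed to avoid; as written, that step is a gap, not a proof.
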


There are several partial results on Conjecture~\ref{Conj:Our-conjecture}. 
Yuan--Zhang~\cite{YZ19} determined the value of $\mathrm{ar}(n,tK_3)$ for the case when $t$ is a constant and $n$ is sufficiently large. 
Wu--Zhang--Li--Xiao~\cite{WZLX23} extended their result to $t \le (1+o(1))\sqrt{n/2}$ and removed the restriction that $n$ is large. 
They also provided some lower bound and upper bound for $\mathrm{ar}(n,tK_3)$ in general. 
A general result from~\cite{DHLY25} determines $\mathrm{ar}(n,tK_3)$ for $t$ in the region $[0, \delta n]$, where $\delta > 0$ is some small constant. 
Very recently, their result was improved by Lu--Luo--Ma~\cite{LLM25} to the interval $\left[0, \tfrac{n-57}{15}\right]$ and removed the restriction that $n$ is large. 
For our purpose in this work, the following result from~\cite{DHLY25} suffices.  

\begin{theorem}[\cite{DHLY25}]~\label{THM:Anti-ramsey-(t+2)K3-t-small}
    There exist constants $\delta>0$ and $N_{\ref{THM:Anti-ramsey-(t+2)K3-t-small}}=N_{\ref{THM:Anti-ramsey-(t+2)K3-t-small}}(\delta)$ such that for every $n\ge N_{\ref{THM:Anti-ramsey-(t+2)K3-t-small}}$ and $t\in [0,\delta n]$,
    \begin{align*}
        \mathrm{ar}(n,(t+2)K_3)
        = \binom{t}{2}+t(n-t)+ \left\lfloor\frac{(n-t)^2}{4} \right\rfloor+2.
    \end{align*}
\end{theorem}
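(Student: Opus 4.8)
Write $M_t:=\binom{t}{2}+t(n-t)+\lfloor (n-t)^2/4\rfloor$, which is the Tur\'an number $\mathrm{ex}(n,(t+1)K_3)$ for $n$ large, attained by $T:=K_t\vee H$ with $H$ a balanced complete bipartite graph on the other $n-t$ vertices. The lower bound is immediate from the general inequality $\mathrm{ex}(n,(t+1)K_3)+2\le \mathrm{ar}(n,(t+2)K_3)$ quoted above (equivalently: color $T$ rainbow and all remaining edges with one extra color; a rainbow $(t+2)K_3$ could use at most one non-$T$ edge, and deleting it leaves $(t+1)K_3\subseteq T$, a contradiction). So the whole task is the matching upper bound $\mathrm{ar}(n,(t+2)K_3)\le M_t+2$.

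\textbf{Setup.} Suppose $K_n$ is colored with $N=M_t+2$ colors and, for contradiction, has no rainbow $(t+2)K_3$. Let $M$ be a maximum rainbow triangle-tiling. Choosing one edge per color gives a rainbow representing graph on $N>\mathrm{ex}(n,(t+1)K_3)$ edges, hence containing a rainbow $(t+1)K_3$; thus $|M|\ge t+1$, while $|M|\le t+1$ since a rainbow tiling of size $t+2$ is exactly a rainbow $(t+2)K_3$. So $|M|=t+1$. Put $U=V(M)$ ($|U|=3t+3$), $W=V\setminus U$, and let $C_M$ be the $3(t+1)$ colors on $M$; call a color \emph{fresh} if it is not in $C_M$. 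If some triangle inside $W$ had three distinct fresh colors, appending it to $M$ would yield a rainbow $(t+2)K_3$; hence choosing one $W$-edge per fresh color occurring in $W$ gives a triangle-free rainbow graph, so at most $\lfloor (n-3t-3)^2/4\rfloor$ fresh colors occur only inside $W$. Partitioning the colors accordingly,
\begin{equation*}
N \;\le\; 3(t+1) \;+\; \bigl(\text{fresh colors on edges meeting }U\bigr) \;+\; \Bigl\lfloor \tfrac{(n-3t-3)^2}{4}\Bigr\rfloor .
\end{equation*}
Everything now hinges on the middle term.

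\textbf{Main obstacle (stability).} Bounding the fresh colors meeting $U$ by the trivial $\binom{3t+3}{2}+(3t+3)(n-3t-3)$ overpays by about $tn$: it treats all $3t+3$ tiling vertices as universal, whereas the extremal configuration admits only $t$. The crux is to recover the missing $\approx tn$ via structure. Form the rainbow subgraph $F^\ast$ of all fresh-colored edges, $|F^\ast|=N-3(t+1)=M_t-3t-1$, within $O(t)=o(n^2)$ of $\mathrm{ex}(n,(t+1)K_3)$. Moreover $F^\ast$ is \emph{almost} $(t+1)K_3$-free: a rainbow $(t+1)K_3$ in $F^\ast$ that is vertex-disjoint from some triangle of $M$ would complete to a rainbow $(t+2)K_3$. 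I expect the principal difficulty to be twofold --- first, ruling out (or exploiting) the copies that meet \emph{every} triangle of $M$, through swapping moves that exchange a triangle $abc\in M$ for a fresh rainbow triangle on $\{a,b,w\}$, $w\in W$, producing a new maximum tiling and freeing $c$ toward the $W$-side; and second, feeding the resulting near-extremality into the stability version of the Corr\'adi--Hajnal / Erd\H{o}s--Gallai extremal theorem for $(t+1)K_3$. The outcome should be a core $B$ with $|B|\le t$ and a bipartition $X\cup Y$ of $V\setminus B$ such that all but $o(n^2)$ fresh colors lie on $K_B\vee(X,Y)$; the bipartite savings on the $U$-incident edges are exactly what reduces $3tn$ to the required $\approx 2tn+\tfrac{3}{2}n$.

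\textbf{From stability to the exact bound.} Granting this approximate structure, I would finish by an exact (stability-to-exact) argument. In the clean configuration $K_B\vee(X,Y)$ the number of supportable colors is maximized at $|B|=t$ and a balanced bipartition, giving exactly $M_t+1$ with the single non-structural color; since our coloring uses $N=M_t+2$, there is a \emph{surplus} witness --- a fresh-colored edge inside $X$ or inside $Y$, or an overflow at $B$ --- that is inconsistent with the extremal coloring. Using that $B$ dominates $V\setminus B$ and that the rainbow bipartite part between $X$ and $Y$ supplies many pairwise color-disjoint edges, one assembles $t+1$ vertex-disjoint rainbow triangles through $B$ and the bipartition whose colors avoid those of a triangle built on the surplus witness, and adjoins that last triangle to obtain a rainbow $(t+2)K_3$ --- the contradiction. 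Hence $N\le M_t+1$ and the theorem follows. The hypotheses $n\ge N_0$ and $t\le\delta n$ are used exactly to validate the stability estimates and the abundance counts; I expect the delicate endgame (pinning the single extra color to a usable edge, and tracking the floor terms) to be where the $\,+2\,$ versus $\,+1\,$ is actually decided.
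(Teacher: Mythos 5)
This statement is quoted from~\cite{DHLY25}; the paper gives no proof of it, so there is no in-paper argument to compare against. Judged on its own terms, your write-up is an outline rather than a proof. The lower bound and the initial bookkeeping are correct: representing each color by one edge forces a maximum rainbow tiling $M$ of size exactly $t+1$, and the fresh colors living entirely inside $W=V\setminus V(M)$ are at most $\lfloor (n-3t-3)^2/4\rfloor$ by Mantel. But the two steps that carry essentially all of the difficulty are explicitly deferred: you write ``I expect the principal difficulty to be twofold\dots'' and ``Granting this approximate structure\dots''. Bounding the fresh colors incident to $V(M)$ by roughly $2tn$ instead of the trivial $3tn$ \emph{is} the theorem; the swapping moves you gesture at (trading $abc\in M$ for a fresh triangle on $\{a,b,w\}$) and the appeal to a stability version of the $(t+1)K_3$ extremal theorem are plausible ingredients, but nothing is verified --- in particular you never rule out that every rainbow $(t+1)K_3$ in the fresh graph $F^\ast$ meets every triangle of $M$, nor do you establish the claimed core-plus-bipartition structure $K_B\vee(X,Y)$, nor the abundance/color-avoidance counts needed to assemble the final $(t+2)$ disjoint rainbow triangles. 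The stability-to-exact endgame, where the gap between $M_t+1$ and $M_t+2$ colors is decided, is likewise only described in terms of what ``should'' happen.

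For what it is worth, your plan is in the same spirit as the machinery this paper builds for the regime $t\ge\delta n$ (representative graph, maximal tiling, stability toward $K_t\vee K_{\lceil (n-t)/2\rceil,\lfloor (n-t)/2\rfloor}$, then an exact absorption step), so the route is not wrong-headed. But as submitted it contains a genuine gap: the central stability lemma and the exact finishing argument are asserted, not proved.
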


The main result of work is a confirmation of Conjecture~\ref{Conj:Our-conjecture} for the first interval. 
In particular, it improves the previous result by Lu--Luo--Ma~\cite{LLM25} for large $n$. 
\begin{theorem}\label{THM:main-first-interval}
    There exists $N_{0}$ such that the following holds for all $n \ge N_0$. 
    For every positive integer $t\le\frac{1}{9}(2n-6)-2$, 
    \begin{align*}
        \mathrm{ar}(n,(t+2)K_3)
        = \binom{t}{2}+t(n-t)+\left\lceil \frac{n-t}{2}\right\rceil \left\lfloor\frac{n-t}{2}\right \rfloor+2.
    \end{align*}
\end{theorem}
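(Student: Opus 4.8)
The lower bound is immediate: the general inequality $\mathrm{ex}(n,(t+1)K_3)+2\le \mathrm{ar}(n,(t+2)K_3)$ from~\cite{DHLY25} quoted above, combined with the classical evaluation $\mathrm{ex}(n,(t+1)K_3)=\binom{t}{2}+t(n-t)+\lceil\frac{n-t}{2}\rceil\lfloor\frac{n-t}{2}\rfloor=\Xi(n,t)$ for $t$ in our range (attained by the apex graph $K_t\vee K_{\lceil(n-t)/2\rceil,\lfloor(n-t)/2\rfloor}$), gives $\mathrm{ar}(n,(t+2)K_3)\ge \Xi(n,t)+2$. The entire difficulty is therefore the matching upper bound, and the plan is to prove it by a representing-subgraph argument reinforced by the stability method. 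For $t\le \delta n$ the statement is already Theorem~\ref{THM:Anti-ramsey-(t+2)K3-t-small}, so the genuine task is the complementary \emph{linear} range $\delta n<t\le\tfrac19(2n-6)-2$.

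Assume for contradiction that $c$ colors $K_n$ with exactly $N=\Xi(n,t)+2$ colors and admits no rainbow $(t+2)K_3$. First I would pass to a \emph{representing subgraph} $R$ containing one edge of each color class; then $R$ is rainbow and $|R|=N>\mathrm{ex}(n,(t+1)K_3)$, so $R\supseteq (t+1)K_3$, which is automatically rainbow. Since any $t+2$ vertex-disjoint triangles in a rainbow subgraph would form a rainbow $(t+2)K_3$, the maximum number of disjoint triangles over all rainbow subgraphs of $K_n$ is exactly $t+1$. Fix a maximum rainbow triangle-tiling $\mathcal T=\{T_1,\dots,T_{t+1}\}$, set $S=\bigcup_i V(T_i)$ with $|S|=3t+3$, $U=V(K_n)\setminus S$ with $|U|=n-3t-3$, and let $C_0$ (with $|C_0|=3t+3$) and $C_1$ be the colors used, respectively not used, on $\mathcal T$.

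The contribution of $U$ is controlled quickly: any triangle inside $U$ carrying three distinct colors of $C_1$ could be appended to $\mathcal T$ to yield a rainbow $(t+2)K_3$, so a representing subgraph of the $C_1$-colored edges inside $U$ is triangle-free, and Mantel's theorem bounds the number of $C_1$-colors appearing inside $U$ by $\lfloor|U|^2/4\rfloor=\lfloor(n-3t-3)^2/4\rfloor$. The crux is to bound the $C_1$-colors carried by edges meeting $S$. Comparing $N$ with $|C_0|+\lfloor(n-3t-3)^2/4\rfloor$ shows I must save on the order of $t(n-2t)$ colors among the $\Theta(tn)$ edges incident to $S$, i.e.\ prove that this many of them reuse colors already accounted for. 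These savings I would extract from local \emph{exchange} arguments: if a vertex $u\in U$ sends two edges of fresh colors to a triangle $T_i$, then $u$ replaces a vertex of $T_i$ to produce another maximum rainbow tiling, and iterating such swaps against the absence of a rainbow $(t+2)K_3$ forces heavy color repetition on the edges between $U$ and $S$. The hypothesis $t\le \tfrac19(2n-6)-2$ enters precisely here: it is the range in which the first-interval apex construction strictly dominates the second-interval construction $\binom{2t+1}{2}+\lceil n/2\rceil\lfloor n/2\rfloor$, so that the optimization underlying these savings is governed by the apex structure and the count closes at $\Xi(n,t)+1$ rather than at a competing value.

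The main obstacle is exactly this last step --- pinning down the color pattern on the $\Theta(tn)$ edges incident to $S$ when $t$ is linear in $n$. I would attack it with stability: show first that $R$ is structurally close to the apex extremal graph and that almost all edges outside it share colors, so that the whole coloring is near the extremal coloring (a rainbow $K_t\vee K_{\lceil(n-t)/2\rceil,\lfloor(n-t)/2\rfloor}$ together with one extra color on all remaining edges); then run a cleaning/absorption step to remove the $o(n^2)$ error and isolate a distinguished set of about $t$ dominating vertices. With this approximate apex structure in hand, the final count mirrors the one underlying Theorem~\ref{THM:Anti-ramsey-(t+2)K3-t-small}, and summing the two contributions gives $|C_0|+|C_1|\le\Xi(n,t)+1<N$, the desired contradiction. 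This yields $\mathrm{ar}(n,(t+2)K_3)\le\Xi(n,t)+2$ and, with the lower bound, proves Theorem~\ref{THM:main-first-interval}.
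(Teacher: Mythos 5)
Your lower bound, the reduction to the range $\delta n\le t\le \tfrac{2n-6}{9}-2$ via Theorem~\ref{THM:Anti-ramsey-(t+2)K3-t-small}, and the opening move of passing to a rainbow representative subgraph with $N$ edges (hence $(t+2)K_3$-free, hence carrying a maximum triangle tiling of size exactly $t+1$) all agree with the paper, and you correctly locate the crux: one must account for savings of order $t(n-2t)$ among the colors on edges meeting the tiling. But from that point on the proposal is an outline, not a proof. The sentences ``iterating such swaps \dots forces heavy color repetition,'' ``show first that $R$ is structurally close to the apex extremal graph,'' and ``run a cleaning/absorption step'' name precisely the content of Sections~\ref{SEC:Stability}--\ref{SEC:PF-Main-result}, and none of it is carried out. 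Your decomposition into $S=V(\mathcal T)$ and $U$ with a Mantel bound inside $U$ is too coarse to produce those savings: all of the required loss lives on the $\Theta(tn)$ edges inside $S$ and between $S$ and $U$, and a two-part partition gives no handle on them. The paper instead inherits the full Allen--B\"ottcher--Hladk\'y--Piguet machinery --- the maximal tiling triple $(\mathcal T,\mathcal M,\mathcal I)$, the ideal partition $(\mathcal T_1,\dots,\mathcal T_4)$, and the bounding function $h$ of Lemma~\ref{LEMMA:upper-bound-H-h1-ABHP} --- and proves the stability statement (Theorem~\ref{THM:STABILITY}: $t_1\ge t-1$, $m\approx\frac{n-3(t+1)}{2}$, $i<\sqrt{2n}$) by a long optimization of $h$ over its arguments; this step is purely graph-theoretic and does not use exchange arguments on the coloring at all.

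The second, equally essential, missing piece is the passage from ``approximately extremal'' to an \emph{exact} large complete tripartite subgraph $K_{t_1-10,\,m+t_1-10,\,m+t_1-10}$ (Theorem~\ref{THM:tripartite-graph}). This is where the rainbow hypothesis is really used: one analyzes the color of an edge $v_1v_2$ spanning two matching edges, observes that $\chi(v_1v_2)$ must occur on some tiling triangle, splits into cases according to whether that triangle lies in $\mathcal T_1$, refines several of the edge bounds entering $h$ by exchange arguments, and invokes the Andr\'asfai--Erd\H{o}s--S\'os theorem to upgrade triangle-freeness of $H[V'\cup V(\mathcal M)]$ to bipartiteness. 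Your sketch never produces such an exact structure, yet the closing step of the paper depends on it: one deletes $t-11$ apex vertices, applies Theorem~\ref{THM:Anti-ramsey-(t+2)K3-t-small} to find a rainbow $13K_3$ in the remainder, discards the $39$ colors it uses from the tripartite graph, and assembles a rainbow $(t+2)K_3$. Without Theorems~\ref{THM:STABILITY} and~\ref{THM:tripartite-graph} (or equivalents), the claimed final inequality $|C_0|+|C_1|\le\Xi(n,t)+1$ is unsupported, so the proposal has a genuine gap exactly at the step you yourself flag as ``the main obstacle.''
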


We remark that around the critical point $\tfrac{2n-6}{9}$, there are two near-extremal constructions, $E_{1}(n,t)$ and $E_{2}(n,t)$ defined below. This makes the stability approach used in the present work somewhat more involved to extend Theorem~\ref{THM:main-first-interval} to the entire first interval in Conjecture~\ref{Conj:Our-conjecture}. Nevertheless, we will address this remaining part, together with the second interval, in a forthcoming work.

\subsection{Extremal constructions}\label{SUBSEC:Extremal-construction}
We discuss the extremal constructions for Conjecture~\ref{Conj:Our-conjecture} in this subsection. 

\begin{figure}[H]
    \centering
\tikzset{every picture/.style={line width=0.75pt}} 

\begin{tikzpicture}[x=0.75pt,y=0.75pt,yscale=-1,xscale=1,scale=0.8]

\draw  [fill={rgb, 255:red, 255; green, 255; blue, 255 } ,fill opacity=1 ] (484.16,168.98).. controls (478.97,180.54) and (465.38,185.71).. (453.81,180.51).. controls (442.25,175.32) and (437.08,161.74).. (442.27,150.17).. controls (447.47,138.6) and (461.05,133.44).. (472.62,138.63).. controls (484.19,143.82) and (489.35,157.41).. (484.16,168.98) -- cycle ;
\draw  [fill={rgb, 255:red, 0; green, 0; blue, 0 } ,fill opacity=1 ] (487.12,101.97).. controls (481.09,115.4) and (465.31,121.4).. (451.88,115.37).. controls (438.45,109.34) and (432.45,93.56).. (438.48,80.13).. controls (444.51,66.7) and (460.29,60.7).. (473.72,66.73).. controls (487.15,72.76) and (493.15,88.54).. (487.12,101.97) -- cycle ;
\draw    (457.98,136.97) -- (454.65,116.9) ;
\draw    (465.8,136.24) -- (462.87,117.56) ;
\draw    (457.98,136.97) -- (462.87,117.56) ;
\draw    (465.8,136.24) -- (470.44,116.41) ;
\draw    (475.26,139.54) -- (470.44,116.41) ;
\draw    (449.24,141.13) -- (454.65,116.9) ;
\draw    (449.24,141.13) -- (445.7,111.32) ;
\draw    (475.26,139.54) -- (479.99,111.35) ;
\draw  [fill={rgb, 255:red, 255; green, 255; blue, 255 } ,fill opacity=1 ] (116.04,150.68).. controls (111.42,160.98) and (99.32,165.58).. (89.01,160.96).. controls (78.71,156.33) and (74.11,144.23).. (78.74,133.93).. controls (83.36,123.63) and (95.46,119.03).. (105.76,123.65).. controls (116.07,128.28) and (120.67,140.38).. (116.04,150.68) -- cycle ;
\draw  [fill={rgb, 255:red, 0; green, 0; blue, 0 } ,fill opacity=1 ] (154.99,98.18).. controls (151.1,106.86) and (140.9,110.73).. (132.22,106.84).. controls (123.54,102.94) and (119.66,92.74).. (123.56,84.06).. controls (127.45,75.38) and (137.65,71.5).. (146.33,75.4).. controls (155.01,79.3) and (158.89,89.49).. (154.99,98.18) -- cycle ;
\draw    (168.03,123.67) -- (152.6,101.95) ;
\draw    (97.16,121.94) -- (125.74,100.52) ;
\draw    (97.16,121.94) -- (122.32,92.81) ;
\draw    (105.76,123.65) -- (125.74,100.52) ;
\draw    (105.76,123.65) -- (132.03,105.38) ;
\draw    (168.03,123.67) -- (146.89,105.93) ;
\draw    (162.46,128.23) -- (146.89,105.93) ;
\draw    (112.31,128.23) -- (130.6,105.95) ;
\draw  [fill={rgb, 255:red, 255; green, 255; blue, 255 } ,fill opacity=1 ] (196.04,150.68).. controls (191.42,160.98) and (179.32,165.58).. (169.01,160.96).. controls (158.71,156.33) and (154.11,144.23).. (158.74,133.93).. controls (163.36,123.63) and (175.46,119.03).. (185.76,123.65).. controls (196.07,128.28) and (200.67,140.38).. (196.04,150.68) -- cycle ;
\draw    (175.64,121.95) -- (152.6,101.95) ;
\draw    (175.64,121.95) -- (156.24,95.35) ;
\draw    (157.42,146) -- (117.67,144.75) ;
\draw    (157.42,146) -- (115.42,152.25) ;
\draw    (159.92,153.5) -- (115.42,152.25) ;
\draw    (157.74,137.93) -- (117.67,144.75) ;
\draw    (157.74,137.93) -- (117.17,136.5) ;
\draw  [fill={rgb, 255:red, 255; green, 255; blue, 255 } ,fill opacity=1 ] (344.82,135.89).. controls (338.47,150.04) and (321.85,156.36).. (307.71,150).. controls (293.57,143.65) and (287.25,127.04).. (293.6,112.9).. controls (299.95,98.75) and (316.56,92.44).. (330.71,98.79).. controls (344.85,105.14) and (351.17,121.75).. (344.82,135.89) -- cycle ;
\draw  [fill={rgb, 255:red, 0; green, 0; blue, 0 } ,fill opacity=1 ] (338.76,64.66).. controls (333.78,75.76) and (320.75,80.71).. (309.65,75.73).. controls (298.56,70.75) and (293.6,57.71).. (298.58,46.62).. controls (303.56,35.52) and (316.6,30.56).. (327.69,35.55).. controls (338.79,40.53) and (343.75,53.56).. (338.76,64.66) -- cycle ;
\draw    (312.75,97.58) -- (309.65,75.73) ;
\draw    (322.94,96.41) -- (317.87,76.4) ;
\draw    (312.7,97.02) -- (317.87,76.4) ;
\draw    (322.94,96.41) -- (326.06,76.42) ;
\draw    (333.34,100.09) -- (326.06,76.42) ;
\draw    (302.25,101.9) -- (309.65,75.73) ;
\draw    (308.75,176.92) -- (314.08,151.92) ;
\draw    (319.42,174.25) -- (314.08,151.92) ;
\draw  [fill={rgb, 255:red, 255; green, 255; blue, 255 } ,fill opacity=1 ] (338.54,202.68).. controls (333.92,212.98) and (321.82,217.58).. (311.51,212.96).. controls (301.21,208.33) and (296.61,196.23).. (301.24,185.93).. controls (305.86,175.63) and (317.96,171.03).. (328.26,175.65).. controls (338.57,180.28) and (343.17,192.38).. (338.54,202.68) -- cycle ;
\draw    (308.75,176.92) -- (301.08,145.58) ;
\draw    (328.26,175.65) -- (325.08,152.25) ;
\draw    (319.42,174.25) -- (325.08,152.25) ;
\draw    (328.26,175.65) -- (336.08,146.92) ;
\draw    (157.41,448.86) -- (163.41,409.86) ;
\draw  [fill={rgb, 255:red, 255; green, 255; blue, 255 } ,fill opacity=1 ] (194.68,400.18).. controls (189.42,411.88) and (175.68,417.1).. (163.98,411.85).. controls (152.29,406.6) and (147.06,392.86).. (152.31,381.16).. controls (157.57,369.46) and (171.31,364.23).. (183.01,369.49).. controls (194.7,374.74) and (199.93,388.48).. (194.68,400.18) -- cycle ;
\draw  [fill={rgb, 255:red, 0; green, 0; blue, 0 } ,fill opacity=1 ] (202.01,326.06).. controls (194.63,342.48) and (175.35,349.81).. (158.93,342.44).. controls (142.51,335.07) and (135.18,315.79).. (142.55,299.37).. controls (149.92,282.95) and (169.21,275.62).. (185.63,282.99).. controls (202.04,290.36) and (209.38,309.65).. (202.01,326.06) -- cycle ;
\draw    (181.33,438.24) -- (174.06,413.7) ;
\draw    (181.33,438.24) -- (185.15,410.46) ;
\draw    (168.84,438.28) -- (163.04,411.48) ;
\draw    (168.84,438.28) -- (174.06,413.7) ;
\draw    (162.75,369.75) -- (168.08,344.75) ;
\draw    (173.42,367.08) -- (168.08,344.75) ;
\draw    (162.75,369.75) -- (155.08,338.42) ;
\draw    (182.26,368.49) -- (179.08,345.08) ;
\draw    (173.42,367.08) -- (179.08,345.08) ;
\draw    (182.26,368.49) -- (190.08,339.75) ;
\draw  [fill={rgb, 255:red, 255; green, 255; blue, 255 } ,fill opacity=1 ] (196.68,470.18).. controls (191.42,481.88) and (177.68,487.1).. (165.98,481.85).. controls (154.29,476.6) and (149.06,462.86).. (154.31,451.16).. controls (159.57,439.46) and (173.31,434.23).. (185.01,439.49).. controls (196.7,444.74) and (201.93,458.48).. (196.68,470.18) -- cycle ;
\draw    (191.87,443.7) -- (185.15,410.46) ;
\draw  [fill={rgb, 255:red, 0; green, 0; blue, 0 } ,fill opacity=1 ] (369.78,396.8).. controls (363.61,410.55) and (347.45,416.69).. (333.7,410.52).. controls (319.95,404.35) and (313.81,388.19).. (319.98,374.44).. controls (326.16,360.69) and (342.31,354.55).. (356.06,360.72).. controls (369.81,366.89) and (375.96,383.05).. (369.78,396.8) -- cycle ;
\draw    (428.5,318.5) -- (428.88,344.88) ;
\draw  [fill={rgb, 255:red, 0; green, 0; blue, 0 } ,fill opacity=1 ] (427.5,317.39).. controls (428.11,316.83) and (429.06,316.88).. (429.61,317.5).. controls (430.17,318.11) and (430.12,319.06).. (429.5,319.61).. controls (428.89,320.17) and (427.94,320.12).. (427.39,319.5).. controls (426.83,318.89) and (426.88,317.94).. (427.5,317.39) -- cycle ;
\draw  [fill={rgb, 255:red, 0; green, 0; blue, 0 } ,fill opacity=1 ] (427.88,343.77).. controls (428.49,343.21) and (429.44,343.26).. (430,343.88).. controls (430.55,344.49) and (430.5,345.44).. (429.89,346).. controls (429.27,346.55) and (428.32,346.5).. (427.77,345.89).. controls (427.21,345.27) and (427.26,344.32).. (427.88,343.77) -- cycle ;

\draw   (428.29,306.99).. controls (442.65,306.99) and (454.29,344.15).. (454.29,389.99).. controls (454.29,435.83) and (442.65,472.99).. (428.29,472.99).. controls (413.93,472.99) and (402.29,435.83).. (402.29,389.99).. controls (402.29,344.15) and (413.93,306.99).. (428.29,306.99) -- cycle ;
\draw    (428.88,344.88) -- (429.26,371.26) ;
\draw  [fill={rgb, 255:red, 0; green, 0; blue, 0 } ,fill opacity=1 ] (428.26,370.15).. controls (428.88,369.59) and (429.82,369.64).. (430.38,370.26).. controls (430.93,370.88) and (430.88,371.82).. (430.27,372.38).. controls (429.65,372.93) and (428.7,372.88).. (428.15,372.27).. controls (427.6,371.65) and (427.64,370.7).. (428.26,370.15) -- cycle ;
\draw    (429.5,434.21) -- (429.88,460.59) ;
\draw  [fill={rgb, 255:red, 0; green, 0; blue, 0 } ,fill opacity=1 ] (428.5,433.09).. controls (429.11,432.54) and (430.06,432.59).. (430.61,433.2).. controls (431.17,433.82) and (431.12,434.77).. (430.5,435.32).. controls (429.89,435.88) and (428.94,435.83).. (428.39,435.21).. controls (427.83,434.59) and (427.88,433.65).. (428.5,433.09) -- cycle ;
\draw  [fill={rgb, 255:red, 0; green, 0; blue, 0 } ,fill opacity=1 ] (428.88,459.47).. controls (429.49,458.92) and (430.44,458.97).. (431,459.58).. controls (431.55,460.2) and (431.5,461.15).. (430.89,461.7).. controls (430.27,462.26) and (429.32,462.21).. (428.77,461.59).. controls (428.21,460.98) and (428.26,460.03).. (428.88,459.47) -- cycle ;

\draw    (429.26,371.26) -- (429.3,391.37) ;
\draw    (429.46,414.1) -- (429.5,434.21) ;

\draw (112,224.4) node [anchor=north west][inner sep=0.75pt]    {$E_{1}( n,t)$};
\draw (145.5,491.4) node [anchor=north west][inner sep=0.75pt]    {$E_{4}( n,t)$};
\draw (441.9,224.4) node [anchor=north west][inner sep=0.75pt]    {$E_{3}( n,t)$};
\draw (294.5,224.4) node [anchor=north west][inner sep=0.75pt]    {$E_{2}( n,t)$};
\draw (432.1,392.25) node [anchor=north west][inner sep=0.75pt]  [rotate=-90]  {$\dotsc $};
\draw (109,67.8) node [anchor=north west][inner sep=0.75pt]    {$X$};
\draw (63,157.4) node [anchor=north west][inner sep=0.75pt]    {$Y_{1}$};
\draw (195,157.4) node [anchor=north west][inner sep=0.75pt]    {$Y_{2}$};
\draw (343,35.4) node [anchor=north west][inner sep=0.75pt]    {$X$};
\draw (349,105.8) node [anchor=north west][inner sep=0.75pt]    {$Y_{1}$};
\draw (343,178) node [anchor=north west][inner sep=0.75pt]    {$Y_{2}$};
\draw (493,72.2) node [anchor=north west][inner sep=0.75pt]    {$X$};
\draw (489,162.2) node [anchor=north west][inner sep=0.75pt]    {$Y$};
\draw (207,291.4) node [anchor=north west][inner sep=0.75pt]    {$X$};
\draw (199,374.4) node [anchor=north west][inner sep=0.75pt]    {$Y_{1}$};
\draw (201,449.4) node [anchor=north west][inner sep=0.75pt]    {$Y_{2}$};
\draw (350.5,491.4) node [anchor=north west][inner sep=0.75pt]    {$E_{5}( n,t)$};
\draw (299,381.4) node [anchor=north west][inner sep=0.75pt]    {$X$};
\draw (465,387.4) node [anchor=north west][inner sep=0.75pt]    {$Y$};
\end{tikzpicture}
\caption{Constructions $E_1(n,t),\dots,E_5(n,t)$.}
\label{FIG:Structure}
\end{figure}
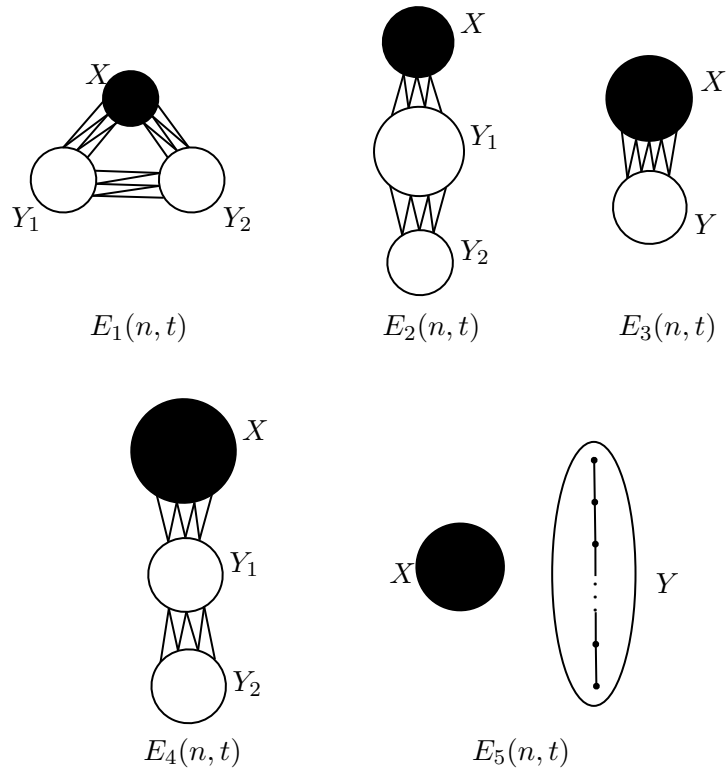

The function $\Xi(n,t)$ arises from the number of edges of the five classes of constructions $E_1(n,t), \cdots, E_5(n,t)$ (see Figure~\ref{FIG:Structure}) defined below. Specifically, 
\begin{align*}
    \Xi(n,t) 
    = \max\left\{|E_i(n,t)| \colon i \in [5] \right\} + 2. 
\end{align*}

Given $k$ pairwise disjoint sets $V_1, \cdots, V_k$, we use $K[V_1, \cdots, V_k]$ to denote the \emph{complete $k$-partite graph} with parts $V_1, \cdots, V_k$. 
Let $n$ and $t$ be integers satisfying $n \ge 3t \ge 0$. 
\begin{itemize}
    \item The vertex set of $E_1(n,t)$ consists of three parts $X,Y_1,Y_2$, with sizes given by
    \begin{align*}
        |X|=t,\quad|Y_1|=\left\lfloor\frac{n-t}{2}\right\rfloor,\quad \text{and}\quad|Y_2|=\left\lceil\frac{n-t}{2}\right\rceil.
    \end{align*}
    The edge set of $E_1(n,t)$ is define as
    \begin{align*}
        E_1(n,t)=\binom{X}{2}\cup K[X,Y_1,Y_2].
    \end{align*}

    \item The vertex set of $E_2(n,t)$ consists of three parts $X,Y_1,Y_2$, with sizes given by
    \begin{align*}
        |X|=2t+1,\quad|Y_1|=\left\lfloor\frac{n}{2}\right\rfloor,\quad \text{and}\quad|Y_2|=\left\lceil\frac{n}{2}\right\rceil-2t-1.
    \end{align*}
    The edge set of $E_2(n,t)$ is define as
    \begin{align*}
        E_2(n,t)=\binom{X}{2}\cup K[X,Y_1]\cup K[Y_1,Y_2].
    \end{align*}

    \item The vertex set of $E_3(n,t)$ consists of three parts $X,Y$, with sizes given by
    \begin{align*}
        |X|=2t+2\quad\text{and}\quad|Y|=n-2t-2.
    \end{align*}
    The edge set of $E_3(n,t)$ is define as
    \begin{align*}
        E_3(n,t)=\binom{X}{2}\cup K[X,Y].
    \end{align*}

    \item The vertex set of $E_4(n,t)$ consists of three parts $X,Y_1,Y_2$, with sizes given by
    \begin{align*}
        |X|=6t-n+6,\quad|Y_1|=n-3t-3,\quad\text{and}\quad|Y_2|=n-3t-3.
    \end{align*}
    The edge set of $E_4(n,t)$ is define as
    \begin{align*}
        E_4(n,t)=\binom{X}{2}\cup K[X,Y_1]\cup K[Y_1,Y_2].
    \end{align*}

    \item The vertex set of $E_5(n,t)$ consists of two parts $X,Y$, with sizes given by
    \begin{align*}
        |X|=3t+5\quad\text{and}\quad|Y|=n-3t-5.
    \end{align*}
    The edge set of $E_5(n,t)$ is define as
    \begin{align*}
        E_5(n,t)=\binom{X}{2}\cup P_{n-3t-6}. 
    \end{align*}
\end{itemize}

Allen--B\"{o}ttcher--Hladk\'{y}--Piguet~\cite{ABHP15}  established the exact value of $\mathrm{ex}(n,(t+1)K_3)$ for sufficiently large $n$ and all $0 \le t \le n/3$. 

\begin{theorem}[\cite{ABHP15}]\label{THM:Turan-number-(t+1)K_3}
    There exists $n_0$ such that for each $n > n_0$ and each $t\in [0,n/3]$, we have the following.
    \begin{align*}
        \mathrm{ex}(n,(t+1)K_3)=
    \begin{cases}
        \binom{t}{2}+t(n-t)+\lceil \frac{n-t}{2}\rceil \lfloor\frac{n-t}{2} \rfloor,    & 1\le t \le \frac{2n-6}{9}, \\[0.4em]
        \binom{2t+1}{2}+\lceil \frac{n}{2}\rceil \lfloor\frac{n}{2} \rfloor,    & \frac{2n-6}{9}\le t \le \frac{n-1}{4}, \\[0.4em]
        \binom{2t+1}{2}+(2t+1)(n-2t-1),   & \frac{n-1}{4}\le t \le \frac{5n-12+\sqrt{3n^2-10n+12} }{22}, \\[0.4em]
        \binom{6t-n+4}{2}+(3t+2)(n-3t-2),  & \frac{5n-12+\sqrt{3n^2-10n+12} }{22}\le t\le \frac{n}{3}.
    \end{cases}
    \end{align*}
\end{theorem}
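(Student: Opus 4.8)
The plan is to establish matching lower and upper bounds on $\mathrm{ex}(n,(t+1)K_3)$. For the lower bound I would exhibit, for each of the four intervals, the natural dense-plus-triangle-free template and verify it is $(t+1)K_3$-free. In the first regime one takes a clique $X$ of size $t$ joined completely to a balanced complete bipartite graph on the remaining $n-t$ vertices: every triangle meets $X$ (the rest is triangle-free), so at most $t$ disjoint triangles exist, and the edge count is exactly $\binom{t}{2}+t(n-t)+\lceil\frac{n-t}{2}\rceil\lfloor\frac{n-t}{2}\rfloor$. The clique-heavy templates (a clique of size $2t+1$ joined to an independent set, and the size-$(6t-n+4)$ template of the last case) are $(t+1)K_3$-free because any triangle packing uses at least two clique-vertices per triangle outside the clique, capping the packing at $t$. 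Computing and pairwise-comparing the four edge counts produces precisely the thresholds $\tfrac{2n-6}{9}$, $\tfrac{n-1}{4}$ and $\tfrac{5n-12+\sqrt{3n^2-10n+12}}{22}$ at which the dominant construction switches.

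For the upper bound, let $G$ be an edge-maximal $(t+1)K_3$-free graph on $n$ vertices and fix a maximum packing of vertex-disjoint triangles $T_1,\dots,T_s$ with $s\le t$. Set $W=\bigcup_i V(T_i)$, so $|W|=3s$, and $U=V(G)\setminus W$. By maximality of the packing, $G[U]$ is triangle-free, so Mantel's theorem gives $|E(G[U])|\le\lfloor |U|^2/4\rfloor$. Writing $|E(G)|=|E(G[W])|+e(W,U)+|E(G[U])|$ and using $|E(G[W])|\le\binom{3s}{2}$, the whole problem reduces to controlling the cross edges $e(W,U)$ and to choosing the packing size $s$.

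The heart of the argument is a local bound on the edges between a single packing triangle $T_i=\{a,b,c\}$ and $U$, proved by augmentation. If $T_i$ sends too many edges into $U$ then, using that $G[U]$ is triangle-free, one locates uncovered vertices realizing two vertex-disjoint triangles inside $G[V(T_i)\cup U]$ (for instance $\{u,a,b\}$ and $\{v,w,c\}$ with $v\sim w$ in $U$); replacing $T_i$ by this pair would enlarge the packing, a contradiction. This forces each $T_i$ into one of a few types: either \emph{poor}, with cross-degree to $U$ essentially at most $|U|+O(1)$, or \emph{rich}, almost completely joined to $U$. A second augmentation argument shows that the rich triangles interact, so their number $r$ is itself bounded in terms of $s$ and $|U|$. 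Summing the per-triangle bounds converts $e(W,U)$ into a clean expression in $s$ and $r$.

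Maximizing $\binom{3s}{2}$ plus the cross contribution (as a function of $s$ and $r$) plus $\lfloor |U|^2/4\rfloor$ over the admissible pairs $(s,r)$ recovers exactly the four templates and their edge counts, with the optimal regime changing at the thresholds identified in the lower bound. To upgrade this asymptotically tight estimate to the stated exact value for large $n$, I would insert a stability step: any $G$ within $o(n^2)$ edges of the optimum is structurally close to one template, after which a vertex-by-vertex cleaning (relocating misplaced vertices and recounting edges) pins down the exact maximum. The main obstacle is the local augmentation lemma together with the boundary analysis: the bounds on $e(T_i,U)$ and on the number of rich triangles must be sharp enough that the subsequent optimization loses none of the additive constants, and near each threshold two templates are simultaneously near-extremal, so the stability and cleaning arguments must separate them. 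It is precisely this transition region where the technical difficulty concentrates and where the large-$n$ hypothesis is genuinely needed.
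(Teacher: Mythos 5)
You should first note that the paper does not prove this statement at all: it is quoted from Allen--B\"{o}ttcher--Hladk\'{y}--Piguet~\cite{ABHP15}, and what the paper imports (Section~\ref{SEC:prelim} and the appendix) is precisely the machinery of that proof --- the lexicographically maximal tiling triple $(\mathcal{T},\mathcal{M},\mathcal{I})$, the ideal partition $(\mathcal{T}_1,\dots,\mathcal{T}_4)$, the per-part edge bounds of Lemmas~\ref{LEM:upper-bound-each-part-1}--\ref{LEM:upper-bound-each-part-3}, and the optimization of the resulting function $h$. Measured against that, your outline has a genuine quantitative gap in the upper bound: you bound the edges inside the packing by $e(G[W])\le\binom{3s}{2}$ and propose to close the argument by optimizing over $s$ and the rich/poor counts. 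This loses $\Theta(n^2)$ in exactly the regimes where the theorem is hard. Concretely, in the first regime with $s=t=\Theta(n)$ and $|U|=n-3t$, your budget is at best $\binom{3t}{2}+2t|U|+\lfloor |U|^2/4\rfloor = \tfrac{n^2}{4}+\tfrac{nt}{2}+\tfrac{3t^2}{4}+O(n)$, while the truth is $\binom{t}{2}+t(n-t)+\lfloor\tfrac{(n-t)^2}{4}\rfloor = \tfrac{n^2}{4}+\tfrac{nt}{2}-\tfrac{t^2}{4}+O(n)$; you overshoot by about $t^2$. The missing idea is the family of \emph{inter-triangle} bounds: between two packing triangles one can have at most $7$ (or $8$) of the $9$ possible cross edges, proved by augmentation through the matching and the critical-vertex/``sees'' analysis (giving $e(\mathcal{T}_1)\le 7\binom{t_1}{2}+3t_1$, etc.). Your augmentation lemma only constrains $e(T_i,U)$, never $e(T_i,T_j)$, so your step ``maximizing $\binom{3s}{2}$ plus the cross contribution\dots recovers exactly the four templates'' cannot go through.

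Two further calibration errors compound this. First, your poor/rich dichotomy is wrong: a triangle ``almost completely joined to $U$'' (cross-degree $\approx 3|U|$) is impossible as soon as $U$ spans a single edge $vw$, since then all three triangle vertices adjacent to both $v$ and $w$ yield an immediate augmentation; the correct ceiling is $\approx 2|U|$ (in the paper's notation $4m+2i$ where $U$ splits into a maximal matching of size $m$ and an independent set of size $i$, via Claim-type arguments such as Claim~\ref{CLM:matching-sees-same-vertex}), and this value is \emph{attained} by the extremal $E_1(n,t)$ --- so the extremal triangles are neither ``poor'' ($\le |U|+O(1)$) nor ``rich'' in your sense, and classifying them as rich overcounts by another $t|U|=\Theta(n^2)$. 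Second, Mantel on $U$ overcounts $e(U)$ by $i^2/4$ relative to the true bound $e(\mathcal{M})+e(\mathcal{M},\mathcal{I})\le m^2+im$ with $e(\mathcal{I})=0$; that $i$ is small (here $i<\sqrt{2n}$) is a \emph{conclusion} of the optimization, as in Theorem~\ref{THM:STABILITY}, not something you may assume, so the matching-plus-independent-set refinement of $U$ is not optional. (A minor point on the lower bound: your one-line justification ``two clique-vertices per triangle caps the packing at $t$'' fails for $\Gamma_4$, where $\lfloor|X|/2\rfloor=\lfloor(6t-n+4)/2\rfloor>t$ in its regime; there one must combine $3a+2b\le|X|$ with $b\le|Y_1\cup Y_2|=n-3t-2$ to get $a+b\le t$.) Your concluding stability-and-cleaning step is a reasonable substitute for the boundary analysis of $h$, but without the inter-triangle bounds and the $(\mathcal{M},\mathcal{I})$ refinement the preceding counting is too lossy for stability to have anything to stabilize around.
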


\begin{figure}[h]
\centering

\tikzset{every picture/.style={line width=0.75pt}} 

\begin{tikzpicture}[x=0.7pt,y=0.7pt,yscale=-0.9,xscale=0.9, scale = 0.8]

\draw    (165.05,173.7) -- (138.5,129.89) ;
\draw    (165.05,173.7) -- (132.5,135.16) ;
\draw    (157.05,176.07) -- (132.5,135.16) ;
\draw  [fill={rgb, 255:red, 255; green, 255; blue, 255 }  ,fill opacity=1 ] (135.94,209.5) .. controls (135.94,189.34) and (152.28,173) .. (172.44,173) .. controls (192.6,173) and (208.94,189.34) .. (208.94,209.5) .. controls (208.94,229.66) and (192.6,246) .. (172.44,246) .. controls (152.28,246) and (135.94,229.66) .. (135.94,209.5) -- cycle ;
\draw    (174.86,172.98) -- (138.5,129.89) ;
\draw    (174.86,172.98) -- (142.86,121.7) ;
\draw    (135.55,208.28) -- (96,207.75) ;
\draw    (135.55,208.28) -- (95.77,216.05) ;
\draw    (136.88,216.94) -- (95.77,216.05) ;
\draw    (137.32,200.05) -- (96,207.75) ;
\draw    (137.32,200.05) -- (94.5,199.5) ;
\draw  [fill={rgb, 255:red, 255; green, 255; blue, 255 }  ,fill opacity=1 ] (324.09,157.15) .. controls (315.91,175.57) and (294.4,183.75) .. (276.05,175.41) .. controls (257.69,167.08) and (249.45,145.38) .. (257.63,126.96) .. controls (265.81,108.53) and (287.32,100.35) .. (305.67,108.69) .. controls (324.02,117.03) and (332.27,138.72) .. (324.09,157.15) -- cycle ;
\draw  [fill={rgb, 255:red, 0; green, 0; blue, 0 }  ,fill opacity=1 ] (263.36,48.05) .. controls (263.23,32.86) and (275.44,20.55) .. (290.62,20.55) .. controls (305.81,20.55) and (318.23,32.86) .. (318.36,48.05) .. controls (318.49,63.24) and (306.28,75.55) .. (291.09,75.55) .. controls (275.91,75.55) and (263.49,63.24) .. (263.36,48.05) -- cycle ;
\draw    (285.8,105.87) -- (281.48,73.48) ;
\draw    (297,106.38) -- (291.09,74.85) ;
\draw    (285.8,105.87) -- (291.09,74.85) ;
\draw    (307.2,109.37) -- (300.28,73.87) ;
\draw    (307.2,109.37) -- (308.59,69.62) ;
\draw    (297,106.38) -- (300.28,73.87) ;
\draw    (283.08,217.87) -- (287.25,177.89) ;
\draw    (283.08,217.87) -- (278.42,175.86) ;
\draw  [fill={rgb, 255:red, 255; green, 255; blue, 255 }  ,fill opacity=1 ] (272.86,233.05) .. controls (272.77,223.11) and (280.76,215.05) .. (290.71,215.05) .. controls (300.65,215.05) and (308.77,223.11) .. (308.86,233.05) .. controls (308.94,242.99) and (300.95,251.05) .. (291.01,251.05) .. controls (281.07,251.05) and (272.94,242.99) .. (272.86,233.05) -- cycle ;
\draw    (297.44,216.82) -- (296.58,178.01) ;
\draw    (290.03,216.09) -- (287.25,177.89) ;
\draw    (276.92,221.78) -- (278.42,175.86) ;
\draw    (290.03,216.09) -- (296.58,178.01) ;
\draw  [fill={rgb, 255:red, 0; green, 0; blue, 0 }  ,fill opacity=1 ] (561.33,54.85) .. controls (561.33,34.69) and (577.67,18.35) .. (597.83,18.35) .. controls (617.99,18.35) and (634.33,34.69) .. (634.33,54.85) .. controls (634.33,75.01) and (617.99,91.35) .. (597.83,91.35) .. controls (577.67,91.35) and (561.33,75.01) .. (561.33,54.85) -- cycle ;
\draw    (581.19,88.02) -- (568.98,122.69) ;
\draw    (574.15,83.26) -- (557.72,119.73) ;
\draw    (622.33,81.58) -- (640.18,109.39) ;
\draw    (629.06,73.6) -- (640.18,108.39) ;
\draw    (589.25,90.55) -- (568.98,122.69) ;
\draw    (581.19,88.02) -- (563.42,119.41) ;
\draw    (568.04,76.27) -- (557.72,119.73) ;
\draw    (574.15,83.26) -- (563.42,119.41) ;
\draw    (622.33,81.58) -- (633.08,112.43) ;
\draw    (614.33,85.41) -- (627.68,118.4) ;
\draw    (629.06,73.6) -- (649.73,111.77) ;
\draw    (614.33,85.41) -- (633.08,112.43) ;
\draw    (624.14,125.42) -- (573.39,130.88) ;
\draw    (564.21,161.54) -- (560.87,203.19) ;
\draw    (564.21,161.54) -- (566.35,204.78) ;
\draw    (559.18,161.85) -- (560.87,203.19) ;
\draw    (569.55,158.52) -- (566.35,204.78) ;
\draw    (569.55,158.52) -- (571.61,209.76) ;
\draw    (633.24,65.2) -- (649.73,111.77) ;
\draw    (641.47,174.63) -- (638.81,194.74) ;
\draw    (635.74,173.64) -- (638.81,194.74) ;
\draw    (647.27,173.51) -- (644.2,194.87) ;
\draw    (641.47,174.63) -- (644.2,194.87) ;
\draw    (622.9,133.82) -- (573.39,130.88) ;
\draw    (575.26,135.78) -- (622.94,133.82) ;
\draw   (561.24,119.29) .. controls (568.98,119.29) and (575.26,128.91) .. (575.26,140.78) .. controls (575.26,152.65) and (568.98,162.27) .. (561.24,162.27) .. controls (553.49,162.27) and (547.22,152.65) .. (547.22,140.78) .. controls (547.22,128.91) and (553.49,119.29) .. (561.24,119.29) -- cycle ;
\draw   (561.28,203.02) .. controls (569.02,203.02) and (575.3,212.64) .. (575.3,224.51) .. controls (575.3,236.38) and (569.02,246) .. (561.28,246) .. controls (553.54,246) and (547.26,236.38) .. (547.26,224.51) .. controls (547.26,212.64) and (553.54,203.02) .. (561.28,203.02) -- cycle ;
\draw   (641.59,108.69) .. controls (652.56,108.69) and (661.45,123.61) .. (661.45,142.01) .. controls (661.45,160.41) and (652.56,175.33) .. (641.59,175.33) .. controls (630.62,175.33) and (621.73,160.41) .. (621.73,142.01) .. controls (621.73,123.61) and (630.62,108.69) .. (641.59,108.69) -- cycle ;
\draw    (597.83,91.52) -- (572.51,127.37) ;
\draw    (589.25,90.35) -- (572.51,127.37) ;
\draw    (606.05,89.23) -- (627.68,118.4) ;
\draw   (641.64,193.69) .. controls (652.61,193.69) and (661.5,208.61) .. (661.5,227.01) .. controls (661.5,245.41) and (652.61,260.33) .. (641.64,260.33) .. controls (630.67,260.33) and (621.78,245.41) .. (621.78,227.01) .. controls (621.78,208.61) and (630.67,193.69) .. (641.64,193.69) -- cycle ;
\draw    (574.86,141.75) -- (622.52,150.2) ;
\draw    (574.86,141.75) -- (621.73,142.01) ;
\draw    (575.26,135.78) -- (621.73,142.01) ;
\draw    (574.22,147.08) -- (624.52,158.08) ;
\draw    (553.78,158.66) -- (555.68,204.78) ;
\draw    (574.22,147.08) -- (622.55,150.2) ;
\draw    (559.1,161.98) -- (555.68,204.78) ;
\draw    (553.78,158.66) -- (550.93,210.02) ;
\draw    (623.76,211.26) -- (574.7,221.06) ;
\draw    (621.77,220.89) -- (574.7,221.06) ;
\draw    (575.32,225.94) -- (621.77,220.89) ;
\draw    (575.31,231.92) -- (622.42,237.26) ;
\draw    (575.31,231.92) -- (622.09,228.92) ;
\draw    (575.32,225.94) -- (622.09,228.92) ;
\draw    (573.32,237.33) -- (624.93,244.99) ;
\draw    (573.32,237.33) -- (622.42,237.26) ;
\draw    (630.55,169.94) -- (633.74,196.65) ;
\draw    (652.96,169.05) -- (649.89,196.93) ;
\draw    (647.27,173.51) -- (649.89,196.93) ;
\draw    (635.74,173.64) -- (633.74,196.65) ;
\draw    (630.55,169.94) -- (628.96,201.1) ;
\draw    (652.96,169.05) -- (654.29,201.1) ;
\draw    (275.92,108.59) -- (274.14,70.08) ;
\draw    (275.92,108.59) -- (281.48,73.48) ;
\draw    (267.35,113.91) -- (274.14,70.08) ;
\draw    (315.35,114.91) -- (308.59,69.62) ;
\draw    (303.61,220.79) -- (305.28,175.68) ;
\draw    (297.44,216.82) -- (305.28,175.68) ;
\draw    (276.92,221.78) -- (270.25,171.95) ;
\draw    (303.61,220.79) -- (312.92,171.49) ;
\draw  [fill={rgb, 255:red, 0; green, 0; blue, 0 }  ,fill opacity=1 ] (401.29,104.08) .. controls (409.47,85.66) and (430.98,77.48) .. (449.34,85.82) .. controls (467.69,94.16) and (475.94,115.85) .. (467.76,134.28) .. controls (459.58,152.7) and (438.07,160.88) .. (419.71,152.54) .. controls (401.36,144.2) and (393.11,122.51) .. (401.29,104.08) -- cycle ;
\draw  [fill={rgb, 255:red, 255; green, 255; blue, 255 }  ,fill opacity=1 ] (462.03,220.18) .. controls (462.16,235.37) and (449.95,247.68) .. (434.76,247.68) .. controls (419.57,247.68) and (407.16,235.37) .. (407.03,220.18) .. controls (406.9,204.99) and (419.1,192.68) .. (434.29,192.68) .. controls (449.48,192.68) and (461.9,204.99) .. (462.03,220.18) -- cycle ;
\draw    (439.58,155.88) -- (443.91,194.3) ;
\draw    (428.38,155.28) -- (434.29,192.68) ;
\draw    (439.58,155.88) -- (434.29,192.68) ;
\draw    (418.18,151.73) -- (425.11,193.84) ;
\draw    (418.18,151.73) -- (416.79,198.88) ;
\draw    (428.38,155.28) -- (425.11,193.84) ;
\draw    (449.46,152.66) -- (451.24,198.34) ;
\draw    (449.46,152.66) -- (443.91,194.3) ;
\draw    (457.94,147.45) -- (451.24,198.34) ;
\draw    (410.08,146.19) -- (416.79,198.88) ;
\draw  [fill={rgb, 255:red, 255; green, 255; blue, 255 }  ,fill opacity=1 ] (23.28,209.5) .. controls (23.28,189.34) and (39.62,173) .. (59.78,173) .. controls (79.93,173) and (96.28,189.34) .. (96.28,209.5) .. controls (96.28,229.66) and (79.93,246) .. (59.78,246) .. controls (39.62,246) and (23.28,229.66) .. (23.28,209.5) -- cycle ;
\draw  [fill={rgb, 255:red, 0; green, 0; blue, 0 }  ,fill opacity=1 ] (144.68,111.51) .. controls (144.82,127.25) and (132.17,140.01) .. (116.43,140.01) .. controls (100.69,140.01) and (87.82,127.25) .. (87.68,111.51) .. controls (87.55,95.77) and (100.2,83.01) .. (115.94,83.01) .. controls (131.68,83.01) and (144.55,95.77) .. (144.68,111.51) -- cycle ;
\draw    (140.77,191.38) -- (94.5,199.5) ;
\draw    (136.88,216.94) -- (93.1,224.5) ;
\draw    (139.55,225.61) -- (93.1,224.5) ;
\draw    (139.55,225.61) -- (88.75,232.08) ;
\draw    (145.34,233.9) -- (88.75,232.08) ;
\draw    (157.05,176.07) -- (125.59,138.25) ;
\draw    (186.14,175.52) -- (142.86,121.7) ;
\draw    (149.95,180.61) -- (125.59,138.25) ;
\draw    (67.69,173.7) -- (94.19,129.89) ;
\draw    (67.69,173.7) -- (100.18,135.16) ;
\draw    (75.68,176.07) -- (100.18,135.16) ;
\draw    (57.89,172.98) -- (94.19,129.89) ;
\draw    (57.89,172.98) -- (89.84,121.7) ;
\draw    (75.68,176.07) -- (107.08,138.25) ;
\draw    (46.64,175.52) -- (89.84,121.7) ;
\draw    (82.76,180.61) -- (107.08,138.25) ;

\draw (143.67,83.8) node [anchor=north west][inner sep=0.75pt]    {$X$};
\draw (16.67,158.4) node [anchor=north west][inner sep=0.75pt]    {$Y_{1}$};
\draw (192.67,158.4) node [anchor=north west][inner sep=0.75pt]    {$Y_{2}$};
\draw (320.67,28.73) node [anchor=north west][inner sep=0.75pt]    {$X$};
\draw (330.67,126.47) node [anchor=north west][inner sep=0.75pt]    {$Y_{1}$};
\draw (310.33,225) node [anchor=north west][inner sep=0.75pt]    {$Y_{2}$};
\draw (474,99.53) node [anchor=north west][inner sep=0.75pt]    {$X$};
\draw (463,213.53) node [anchor=north west][inner sep=0.75pt]    {$Y_{1}$};
\draw (574,263.4) node [anchor=north west][inner sep=0.75pt]    {$\Gamma _{4}( n,k)$};
\draw (522,127.82) node [anchor=north west][inner sep=0.75pt]    {$Y_{1}$};
\draw (662,127.19) node [anchor=north west][inner sep=0.75pt]    {$Y_{2}$};
\draw (522,216.19) node [anchor=north west][inner sep=0.75pt]    {$Y_{3}$};
\draw (662,229.74) node [anchor=north west][inner sep=0.75pt]    {$Y_{4}$};
\draw (636,28.2) node [anchor=north west][inner sep=0.75pt]    {$X$};
\draw (267.73,263.4) node [anchor=north west][inner sep=0.75pt]    {$\Gamma _{2}( n,k)$};
\draw (88.73,263.4) node [anchor=north west][inner sep=0.75pt]    {$\Gamma _{1}( n,k)$};
\draw (410.06,263.4) node [anchor=north west][inner sep=0.75pt]    {$\Gamma _{3}( n,k)$};

\end{tikzpicture}
\caption{The extremal graphs.}\label{FIG:extramal-graph}
\end{figure}
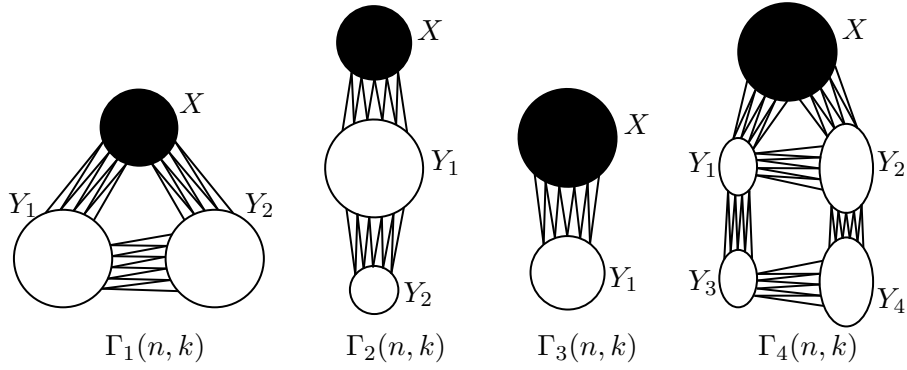

\textbf{Remark: }Figure~\ref{FIG:extramal-graph} illustrates the extremal graphs of Theorem~\ref{THM:Turan-number-(t+1)K_3}, where $ \Gamma_i(n,t) \cong E_i(n,t) $ for $ i \in \{1,2\} $. $ \Gamma_3(n,t) = \binom{X}{2} \cup K[X,Y_1] $ with $ |X| = 2t+1 $ and $ |Y_1| = n-2t-1 $. $ \Gamma_4(n,t) $ consists of disjoint vertex sets $ X, Y_1, Y_2, Y_3, Y_4 $ satisfying $ |Y_1| = |Y_3| $, $ |Y_2| = |Y_4| $, $ |Y_1| + |Y_2| = n-3t-2 $, and $ |X| = 6t - n + 4 $. All edges within $ X $, between $ X $ and $ Y_1 \cup Y_2 $, between $ Y_1 \cup Y_4 $, and between $ Y_2 \cup Y_3 $ are included in $\Gamma_4(n,t)$.

Define $e_i(n,t)\coloneqq |E_i(n,t)|$ for $i \in [5]$. To demonstrate the tightness of the bound $\mathrm{ar}(n,(t+2)K_3)$ established in Conjecture~\ref{Conj:Our-conjecture} and Theorem \ref{THM:main-first-interval}, we construct the following surjective example. Let $G$ be a copy of $E_i(n,t)$ in $K_n$ and $\chi\colon K_n\to\mathbb [e_i(n,t)+1]$ be an edge-coloring satisfying
\begin{itemize}
    \item  $\chi(G) = [e_i(n,t)]$, that is, $G$ is rainbow, and 
    \item  $\chi(e)= e_i(n,t) + 1$ for every $e \in K_n \setminus G$.
\end{itemize}
When $i\in[4]$, it is easy to see that $K_n$ does not contain a rainbow copy of $(t + 2)K_3$ under $\chi$. 

For $ i = 5 $, let $ G $ be a copy of $ K_{3t+5} $ in $ K_n $, with $ V(K_n) \setminus V(G) = \{v_1, v_2, \dots, v_{n-3t-5}\} $. Consider an edge-coloring $ \chi: K_n \to [e_5(n,t) + 1] $ satisfying the following conditions:
\begin{itemize}
  \item$ \chi(G) =\left[ \binom{3t+5}{2} \right]$, meaning $ G $ is a rainbow subgraph,
  \item$ \chi(xv_i) = \binom{3t+5}{2} + i $ for all $ xv_i \in V(G) \times \{v_1, v_2, \ldots, v_{n-3t-5}\} $, and
 \item $ \chi(v_iv_j) = \binom{3t+5}{2} + \min\{i,j\} $ for all $ v_iv_j \in E(H-G)$.
\end{itemize}
It is easy to see that $K_n$ does not contain a rainbow copy of $(t + 2)K_3$ under $\chi$.

In the remainder of this paper, we say a graph $H \subseteq K_n$ is a \emph{representative graph} of a surjective edge coloring $\chi \colon K_n \to [N]$  if  $H$ is rainbow under $\chi$ and $|H | = N$.  
\subsection{Setup}
In this subsection, we present the notations and assumptions that will be used throughout the remainder of this paper, unless otherwise stated.

For $t\ge 0$, let $H$ be an $n$-vertex $(t+1)K_3$-free graph. Given a graph $F$, a collection of vertex-disjoint copies of $F$ in $H$ is called an \emph{$F$-tiling} in $H$. A 3-tuple of families $(\mathcal{T},\mathcal{M},\mathcal{I})$ is a \emph{tiling triple} of $H$ if 
\begin{itemize}
    \item $\mathcal T$ is a $K_3$-tiling in $H$,
    \item $\mathcal M$ is a $K_2$-tiling (matching) in $H$,
    \item $\mathcal I$ is a $K_1$-tiling (independent set) in $H$, and
    \item $V(\mathcal T)\cup V(\mathcal M)\cup V(\mathcal I)=V(H)$ is a partition of $V(H)$.
\end{itemize}

As proposed in~\cite{ABHP15}, we additionally require that  $(\mathcal{T},\mathcal{M},\mathcal{I})$ is lexicographically maximal among all tiling triples of $H$ for the subsequent analysis. Such a tiling triple will be referred to as a \emph{maximal tiling triple} of $H$. 

\begin{figure}[h]
    \centering
\tikzset{every picture/.style={line width=0.5pt}} 

\begin{tikzpicture}[x=0.8pt,y=0.8pt,yscale=-1,xscale=1,scale=0.85]

\draw [color={rgb, 255:red, 0; green, 0; blue, 0 }  ,draw opacity=1 ][line width=1.5]  [dash pattern={on 1.69pt off 2.76pt}]  (565.36,60.51) -- (563.07,137.71) ;
\draw [color={rgb, 255:red, 0; green, 0; blue, 0 }  ,draw opacity=1 ][line width=1.5]  [dash pattern={on 1.69pt off 2.76pt}]  (565.36,60.51) -- (522.12,75.66) ;
\draw [color={rgb, 255:red, 0; green, 0; blue, 0 }  ,draw opacity=1 ][line width=1.5]  [dash pattern={on 1.69pt off 2.76pt}]  (445.07,81.69) -- (428.71,138.08) ;
\draw [color={rgb, 255:red, 0; green, 0; blue, 0 }  ,draw opacity=1 ][line width=1.5]  [dash pattern={on 1.69pt off 2.76pt}]  (340.17,61.09) -- (387.65,76.05) ;
\draw [color={rgb, 255:red, 0; green, 0; blue, 0 }  ,draw opacity=1 ][line width=1.5]  [dash pattern={on 1.69pt off 2.76pt}]  (371.86,31.18) -- (387.76,76.03) ;
\draw [color={rgb, 255:red, 0; green, 0; blue, 0 }  ,draw opacity=1 ][line width=1.5]  [dash pattern={on 1.69pt off 2.76pt}]  (445.07,81.69) -- (387.76,76.03) ;
\draw  [fill={rgb, 255:red, 0; green, 0; blue, 0 }  ,fill opacity=1 ][line width=1.5]  (390.69,75.66) .. controls (390.69,77.26) and (389.39,78.56) .. (387.79,78.56) .. controls (386.19,78.56) and (384.89,77.26) .. (384.89,75.66) .. controls (384.89,74.06) and (386.19,72.76) .. (387.79,72.76) .. controls (389.39,72.76) and (390.69,74.06) .. (390.69,75.66) -- cycle ;
\draw [line width=1.5]    (428.71,138.08) -- (387.76,76.03) ;
\draw  [fill={rgb, 255:red, 0; green, 0; blue, 0 }  ,fill opacity=1 ][line width=1.5]  (431.64,137.71) .. controls (431.64,139.31) and (430.34,140.61) .. (428.74,140.61) .. controls (427.14,140.61) and (425.84,139.31) .. (425.84,137.71) .. controls (425.84,136.11) and (427.14,134.81) .. (428.74,134.81) .. controls (430.34,134.81) and (431.64,136.11) .. (431.64,137.71) -- cycle ;
\draw  [fill={rgb, 255:red, 0; green, 0; blue, 0 }  ,fill opacity=1 ][line width=1.5]  (349.74,137.71) .. controls (349.74,139.31) and (348.44,140.61) .. (346.84,140.61) .. controls (345.23,140.61) and (343.94,139.31) .. (343.94,137.71) .. controls (343.94,136.11) and (345.23,134.81) .. (346.84,134.81) .. controls (348.44,134.81) and (349.74,136.11) .. (349.74,137.71) -- cycle ;
\draw [line width=1.5]    (346.81,138.08) -- (387.76,76.03) ;
\draw [line width=1.5]    (346.81,138.08) -- (428.71,138.08) ;
\draw  [color={rgb, 255:red, 45; green, 72; blue, 133 }  ,draw opacity=1 ][fill={rgb, 255:red, 45; green, 72; blue, 133 }  ,fill opacity=1 ][line width=1.5]  (443.73,80.13) .. controls (444.71,79.51) and (446.09,79.69) .. (446.83,80.55) .. controls (447.57,81.41) and (447.38,82.62) .. (446.41,83.24) .. controls (445.43,83.87) and (444.05,83.68) .. (443.31,82.82) .. controls (442.57,81.96) and (442.76,80.76) .. (443.73,80.13) -- cycle ;
\draw  [color={rgb, 255:red, 136; green, 4; blue, 22 }  ,draw opacity=1 ][fill={rgb, 255:red, 136; green, 4; blue, 22 }  ,fill opacity=1 ][line width=1.5]  (338.51,62.33) .. controls (337.7,61.53) and (337.79,60.31) .. (338.71,59.63) .. controls (339.62,58.94) and (341.02,59.04) .. (341.83,59.85) .. controls (342.64,60.66) and (342.55,61.87) .. (341.63,62.56) .. controls (340.72,63.24) and (339.32,63.14) .. (338.51,62.33) -- cycle ;
\draw  [color={rgb, 255:red, 136; green, 4; blue, 22 }  ,draw opacity=1 ][fill={rgb, 255:red, 136; green, 4; blue, 22 }  ,fill opacity=1 ][line width=1.5]  (370.2,32.42) .. controls (369.39,31.61) and (369.48,30.4) .. (370.39,29.71) .. controls (371.31,29.03) and (372.71,29.13) .. (373.51,29.93) .. controls (374.32,30.74) and (374.24,31.96) .. (373.32,32.64) .. controls (372.4,33.33) and (371,33.23) .. (370.2,32.42) -- cycle ;
\draw [color={rgb, 255:red, 136; green, 4; blue, 22 }  ,draw opacity=1 ][fill={rgb, 255:red, 136; green, 4; blue, 22 }  ,fill opacity=1 ][line width=1.5]    (371.86,31.18) -- (340.17,61.09) ;
\draw [color={rgb, 255:red, 0; green, 0; blue, 0 }  ,draw opacity=1 ][line width=1.5]  [dash pattern={on 1.69pt off 2.76pt}]  (167.17,58.73) -- (214.26,77.07) ;
\draw [color={rgb, 255:red, 0; green, 0; blue, 0 }  ,draw opacity=1 ][line width=1.5]  [dash pattern={on 1.69pt off 2.76pt}]  (197.17,31.86) -- (214.37,77.04) ;
\draw [color={rgb, 255:red, 0; green, 0; blue, 0 }  ,draw opacity=1 ][line width=1.5]  [dash pattern={on 1.69pt off 2.76pt}]  (241.65,36.86) -- (214.37,77.04) ;
\draw [color={rgb, 255:red, 0; green, 0; blue, 0 }  ,draw opacity=1 ][line width=1.5]  [dash pattern={on 1.69pt off 2.76pt}]  (265.86,68.06) -- (214.37,77.04) ;
\draw  [fill={rgb, 255:red, 0; green, 0; blue, 0 }  ,fill opacity=1 ] (217.27,76.64) .. controls (217.27,78.25) and (215.98,79.54) .. (214.37,79.54) .. controls (212.77,79.54) and (211.48,78.25) .. (211.48,76.64) .. controls (211.48,75.04) and (212.77,73.74) .. (214.37,73.74) .. controls (215.98,73.74) and (217.27,75.04) .. (217.27,76.64) -- cycle ;
\draw [line width=1.5]    (254.94,138.3) -- (214.37,77.04) ;
\draw  [fill={rgb, 255:red, 0; green, 0; blue, 0 }  ,fill opacity=1 ] (257.84,137.9) .. controls (257.84,139.5) and (256.55,140.8) .. (254.94,140.8) .. controls (253.34,140.8) and (252.05,139.5) .. (252.05,137.9) .. controls (252.05,136.3) and (253.34,135) .. (254.94,135) .. controls (256.55,135) and (257.84,136.3) .. (257.84,137.9) -- cycle ;
\draw  [fill={rgb, 255:red, 0; green, 0; blue, 0 }  ,fill opacity=1 ] (176.7,137.9) .. controls (176.7,139.5) and (175.41,140.8) .. (173.81,140.8) .. controls (172.2,140.8) and (170.91,139.5) .. (170.91,137.9) .. controls (170.91,136.3) and (172.2,135) .. (173.81,135) .. controls (175.41,135) and (176.7,136.3) .. (176.7,137.9) -- cycle ;
\draw [line width=1.5]    (173.81,138.3) -- (214.37,77.04) ;
\draw [line width=1.5]    (173.81,138.3) -- (254.94,138.3) ;
\draw  [color={rgb, 255:red, 45; green, 72; blue, 133 }  ,draw opacity=1 ][fill={rgb, 255:red, 45; green, 72; blue, 133 }  ,fill opacity=1 ][line width=1.5]  (240.31,35.3) .. controls (241.27,34.69) and (242.65,34.88) .. (243.39,35.74) .. controls (244.13,36.59) and (243.95,37.79) .. (242.99,38.41) .. controls (242.03,39.03) and (240.65,38.83) .. (239.91,37.98) .. controls (239.17,37.12) and (239.35,35.92) .. (240.31,35.3) -- cycle ;
\draw  [color={rgb, 255:red, 45; green, 72; blue, 133 }  ,draw opacity=1 ][fill={rgb, 255:red, 45; green, 72; blue, 133 }  ,fill opacity=1 ][line width=1.5]  (264.56,66.56) .. controls (265.53,65.94) and (266.91,66.13) .. (267.65,66.99) .. controls (268.38,67.85) and (268.2,69.04) .. (267.24,69.66) .. controls (266.28,70.28) and (264.9,70.09) .. (264.16,69.23) .. controls (263.42,68.37) and (263.6,67.18) .. (264.56,66.56) -- cycle ;
\draw [color={rgb, 255:red, 45; green, 72; blue, 133 }  ,draw opacity=1 ][fill={rgb, 255:red, 45; green, 72; blue, 133 }  ,fill opacity=1 ][line width=1.5]    (265.86,68.06) -- (241.65,36.86) ;
\draw  [color={rgb, 255:red, 136; green, 4; blue, 22 }  ,draw opacity=1 ][fill={rgb, 255:red, 136; green, 4; blue, 22 }  ,fill opacity=1 ][line width=1.5]  (165.58,60.01) .. controls (164.76,59.2) and (164.84,57.99) .. (165.75,57.32) .. controls (166.66,56.64) and (168.05,56.74) .. (168.86,57.55) .. controls (169.67,58.36) and (169.59,59.56) .. (168.69,60.24) .. controls (167.78,60.92) and (166.39,60.81) .. (165.58,60.01) -- cycle ;
\draw  [color={rgb, 255:red, 136; green, 4; blue, 22 }  ,draw opacity=1 ][fill={rgb, 255:red, 136; green, 4; blue, 22 }  ,fill opacity=1 ][line width=1.5]  (195.53,33.08) .. controls (194.72,32.28) and (194.8,31.07) .. (195.71,30.39) .. controls (196.61,29.72) and (198.01,29.82) .. (198.82,30.63) .. controls (199.63,31.44) and (199.55,32.64) .. (198.64,33.32) .. controls (197.74,34) and (196.34,33.89) .. (195.53,33.08) -- cycle ;
\draw [color={rgb, 255:red, 136; green, 4; blue, 22 }  ,draw opacity=1 ][fill={rgb, 255:red, 136; green, 4; blue, 22 }  ,fill opacity=1 ][line width=1.5]    (197.17,31.86) -- (167.22,58.78) ;
\draw [color={rgb, 255:red, 0; green, 0; blue, 0 }  ,draw opacity=1 ][line width=1.5]  [dash pattern={on 1.69pt off 2.76pt}]  (586.64,90.82) -- (563.07,137.71) ;
\draw [color={rgb, 255:red, 0; green, 0; blue, 0 }  ,draw opacity=1 ][line width=1.5]  [dash pattern={on 1.69pt off 2.76pt}]  (586.64,90.82) -- (522.1,76.03) ;
\draw  [fill={rgb, 255:red, 0; green, 0; blue, 0 }  ,fill opacity=1 ][line width=1.5]  (525.02,75.66) .. controls (525.02,77.26) and (523.72,78.56) .. (522.12,78.56) .. controls (520.52,78.56) and (519.22,77.26) .. (519.22,75.66) .. controls (519.22,74.06) and (520.52,72.76) .. (522.12,72.76) .. controls (523.72,72.76) and (525.02,74.06) .. (525.02,75.66) -- cycle ;
\draw [line width=1.5]    (563.05,138.08) -- (522.1,76.03) ;
\draw  [fill={rgb, 255:red, 0; green, 0; blue, 0 }  ,fill opacity=1 ][line width=1.5]  (565.97,137.71) .. controls (565.97,139.31) and (564.67,140.61) .. (563.07,140.61) .. controls (561.47,140.61) and (560.17,139.31) .. (560.17,137.71) .. controls (560.17,136.11) and (561.47,134.81) .. (563.07,134.81) .. controls (564.67,134.81) and (565.97,136.11) .. (565.97,137.71) -- cycle ;
\draw  [fill={rgb, 255:red, 0; green, 0; blue, 0 }  ,fill opacity=1 ][line width=1.5]  (484.07,137.71) .. controls (484.07,139.31) and (482.77,140.61) .. (481.17,140.61) .. controls (479.57,140.61) and (478.27,139.31) .. (478.27,137.71) .. controls (478.27,136.11) and (479.57,134.81) .. (481.17,134.81) .. controls (482.77,134.81) and (484.07,136.11) .. (484.07,137.71) -- cycle ;
\draw [line width=1.5]    (481.14,138.08) -- (522.1,76.03) ;
\draw [line width=1.5]    (481.14,138.08) -- (563.05,138.08) ;
\draw  [color={rgb, 255:red, 45; green, 72; blue, 133 }  ,draw opacity=1 ][fill={rgb, 255:red, 45; green, 72; blue, 133 }  ,fill opacity=1 ][line width=1.5]  (585.31,89.27) .. controls (586.28,88.64) and (587.67,88.83) .. (588.4,89.69) .. controls (589.14,90.55) and (588.95,91.75) .. (587.98,92.38) .. controls (587.01,93) and (585.62,92.81) .. (584.88,91.95) .. controls (584.15,91.1) and (584.34,89.89) .. (585.31,89.27) -- cycle ;
\draw  [color={rgb, 255:red, 136; green, 4; blue, 22 }  ,draw opacity=1 ][fill={rgb, 255:red, 136; green, 4; blue, 22 }  ,fill opacity=1 ][line width=1.5]  (563.7,61.75) .. controls (562.89,60.95) and (562.98,59.73) .. (563.89,59.05) .. controls (564.81,58.36) and (566.21,58.46) .. (567.01,59.27) .. controls (567.82,60.08) and (567.74,61.29) .. (566.82,61.98) .. controls (565.9,62.66) and (564.5,62.56) .. (563.7,61.75) -- cycle ;

\draw (429.33,154.05) node [anchor=north west][inner sep=0.75pt]    {$K_{3} \ in\ \mathcal{T}_{2}$};
\draw (184.33,154.07) node [anchor=north west][inner sep=0.75pt]    {$K_{3} \ in\ \mathcal{T}_{1}$};
%
\end{tikzpicture}
\caption{Members in $\mathcal T_1$ and $\mathcal T_2$.}\label{FIG:Members-T1-T2}
\end{figure}
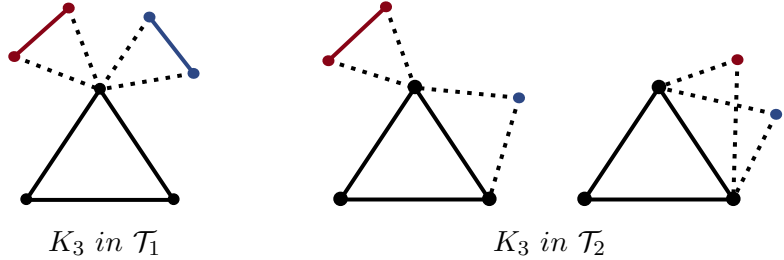

For a triangle $xyz$, an edge $uv$ and a vertex $w$ in $H$, we say
\begin{itemize}
    \item $uv$ \emph{sees} $xyz$ if there exists a vertex $v_1\in \{x,y,z\}$ such that $uv_1,vv_1\in H$. In this case, we also say $uv$ \emph{sees}  $v_1$ of $xyz$. 
    \item $w$ \emph{sees} $xyz$ if there exist two vertices $v_1,v_2\in \{x,y,z\}$ such that $wv_1,wv_2\in H$.
\end{itemize}
In what follows, we adopt the convention that if an edge in $\mathcal M$ sees a triangle $xyz\in \mathcal T_1$, then  it specifically sees the vertex $x$ of $xyz$. We refer to $x$ as the \emph{critical vertex} of $ xyz$, while $y$  and $z$ \emph{non-critical vertices} of $ xyz$. Let $V'$ be the set of all non-critical vertices of the triangles in $\mathcal{T}_1$, and $V''=V(\mathcal{T}_1)\setminus V'$.

Below, we partition $\mathcal T$ further into four subfamilies, as illustrated in Figure~\ref{FIG:Members-T1-T2} for $\mathcal T_1$ and $\mathcal T_2$.
\begin{itemize}
    \item $\mathcal T_1$ consists of members in $\mathcal T$ that are seen by at least two members in $\mathcal M$.
    \item $\mathcal T_2$ consists of members in $\mathcal T \setminus \mathcal{T}_{1}$ that are seen by at least two members in $\mathcal T$ or one member in $\mathcal M$ and at least one member in $\mathcal I$.
    \item $\mathcal T_3$ and $\mathcal T_4$ are obtained through the following process: We start with $D$ equal to the set of all triangles in $\mathcal{T}\setminus (\mathcal T_1\cup\mathcal  T_2)$, and $S = \emptyset$. If there is a triangle in $D$ that sends at most $8(|D| - 1)$ edges to the other triangles in $D$, we move it to $S$. 
    We repeat until $D$ contains no more such triangles. We then set $\mathcal T_3 \coloneqq S$, and $\mathcal T_4 \coloneqq D$. The construction of $\mathcal T_3$ ensures that $e(H[\mathcal T_3]) + e(H[\mathcal T_3, \mathcal T_4])$ is small enough. In $\mathcal T_4$, there is not only a high edge-density but also a “minimum-degree” condition between triangles. 
\end{itemize}

We say such a partition $(\mathcal{T}_1, \mathcal{T}_2, \mathcal{T}_3, \mathcal{T}_4)$ is an \emph{ideal partition} of $\mathcal{T}$. 

In the remainder of this paper, we set 
\begin{enumerate}[label=(\roman*)]
    \item $N \coloneqq \mathrm{ex}(n,(t+1)K_3)+2$, 
    \item $m \coloneqq |\mathcal{M}|$, 
    \item $i \coloneqq |\mathcal I|$, 
    \item $t_j \coloneqq |\mathcal{T}_j|$ and $\bar{t}_{j} \coloneqq |\mathcal{T}| - t_j$ for $j \in [4]$. 
\end{enumerate}

\textbf{Organization of the paper}: We present some preliminary results in the next section. In Section \ref{SEC:Stability}, 
we establish bounds on the cardinalities of the components in the maximal tiling triple $(\mathcal{T}, \mathcal{M}, \mathcal{I})$ for $(t+2)K_3$-free graphs $H$ with $n$ vertices and $N$ edges. In Section~\ref{SEC:H-Structure}, we prove that any representative graph $H$ under an edge-coloring of $K_n$ with $N$ colors avoiding rainbow $(t+2)K_3$ must contain a large rainbow complete tripartite subgraph. In Section~\ref{SEC:PF-Main-result}, we provide a complete proof of Theorem~\ref{THM:main-first-interval}.


\section{Preliminaries}\label{SEC:prelim}
Let $H$ be a graph. For every vertex set $S \subseteq V(H)$, let $H[S]$ denote the \emph{induced subgraph} of $H$ on $S$. The number of edges in $H[S]$ is denoted by $e_H(S)$ for simplicity. We use $H - S$ to denote the induced subgraph of $H$ on $V(H)\setminus S$. 
Given two vertex sets $S, T \subseteq V(H)$, let $e_{H}(S,T)$ be the number of edges with one end in $S$ and the other in $T$. For  subgraphs $H_1, H_2$ in $H$, we  we define $e(H_1, H_2)\coloneqq e(V(H_1), V(H_2))$ for notational convenience.  Given  vertex-disjoint subsets $S_1,\dots,S_k \subseteq V(H)$, let $H[S_1,\dots,S_k]=H\cap K[S_1,\dots,S_k]$ denote the \emph{induced $k$-partite subgraph} of $H$. 

In their proof of Theorem~\ref{THM:Turan-number-(t+1)K_3}, Allen-Böttcher-Hladký-Piguet~\cite{ABHP15} considered the maximal tiling triple $(\mathcal T,\mathcal M,\mathcal I)$ of  a $(t+1)K_3$-free graph $H$ with an ideal partition $(\mathcal T_1,\mathcal T_2,\mathcal T_3,\mathcal T_4)$ of $\mathcal T$, and completely characterized the edge structure both between and within all components of $(\mathcal T_1,\mathcal T_2,\mathcal T_3,\mathcal T_4,\mathcal M,\mathcal I)$ (see Appendix).

Define
\begin{align*}
     f\coloneqq &f\left(\tau_{1}, \tau_{2}, \tau_{3}, \tau_{4}, \mu, \iota\right)\\ =& 4 \mu \tau_{1}+2 \iota \tau_{1}+7\binom{\tau_1}{2}+3 \tau_{1}+2 \iota \tau_{2}+8\binom{\tau_2}{2}+3 \tau_{2} +8\binom{\tau_3}{2}+8 \tau_{3} \tau_{4} \\ &+3 \tau_{3}  +7 \tau_{1} \tau_{2}+(2+3 \mu) \tau_{2}+7 \tau_{1}\left(\tau_{3}+\tau_{4}\right) +(3+3 \mu) \tau_{3} \\ &+8 \tau_{2}\left(\tau_{3}+\tau_{4}\right)+(2+\iota) \tau_{3}, 
\end{align*}
 and 
\begin{align*}
    h \coloneqq h(\tau_1,\tau_2,\tau_3,\tau_4,\mu,\iota)=f+\iota\mu +\mu^2+(3\mu+3)\tau_4+(\iota+2)\tau_4+\binom{3\tau_4}{2}.
\end{align*}
Remark that $h$ represents an upper bound on the number of edges within or between certain parts of the maximal tiling triple in $H$. This bound heavily depends on Lemmas~\ref{LEM:upper-bound-each-part-1},~\ref{LEM:upper-bound-each-part-2} and \ref{LEM:upper-bound-each-part-3}. For instance, the term 
$4 \mu \tau_{1}$ in $f$ corresponds to the maximum number of edges between $\mathcal T_1$ and $\mathcal M$ with $|\mathcal T_1|=\tau_1$ and $|\mathcal M|=\mu$, as ensured by Lemma~\ref{LEM:upper-bound-each-part-1}~\ref{itm:mt1}. In our proof, some bounds provided by those lemmas  can be further refined, allowing for stronger results. Our proof relies crucially on the following lemma, established by  Allen-Böttcher-Hladký-Piguet~\cite{ABHP15}. 

\begin{lemma}[\cite{ABHP15}]\label{LEMMA:upper-bound-H-h1-ABHP}
    Let $H$ be a $(t+1)K_3$-free graph  with $n$ vertices, $(\mathcal T, \mathcal M, \mathcal I)$ be a maximal tiling triple of $H$ and $(\mathcal T_1,\mathcal T_2,\mathcal T_3,\mathcal T_4)$  be an ideal partition of $\mathcal T$. Then 
    \begin{align*}
        |H |\le h(t_1,t_2,t_3,t_4,m,i). 
    \end{align*}
\end{lemma}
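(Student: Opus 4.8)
The plan is to prove the bound by decomposing the edge set of $H$ according to the six-part partition $(\mathcal{T}_1,\mathcal{T}_2,\mathcal{T}_3,\mathcal{T}_4,\mathcal{M},\mathcal{I})$ of $V(H)$ induced by the maximal tiling triple and its ideal partition, and then bounding the contribution of each within-part and each between-part block separately. Writing the six vertex classes as $\mathcal{X}_1,\dots,\mathcal{X}_6$, we have
\begin{align*}
    |H| = \sum_{j=1}^{6} e_H(V(\mathcal{X}_j)) + \sum_{1 \le j < k \le 6} e_H(V(\mathcal{X}_j), V(\mathcal{X}_k)),
\end{align*}
so it suffices to produce an upper bound for each of the $6 + \binom{6}{2} = 21$ blocks and to check that their sum equals $h(t_1,t_2,t_3,t_4,m,i)$. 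This matches the reading of $h$ as ``an upper bound on the number of edges within or between certain parts of the maximal tiling triple.''

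The individual block bounds are exactly what Lemmas~\ref{LEM:upper-bound-each-part-1}, \ref{LEM:upper-bound-each-part-2} and \ref{LEM:upper-bound-each-part-3} supply, and each is proved by the same mechanism: if a block carried more edges than claimed, then either the $(t+1)K_3$-free hypothesis would be violated, or one could re-tile to obtain a tiling triple that is lexicographically larger than $(\mathcal{T},\mathcal{M},\mathcal{I})$. For the within-triangle edges I would use the trivial count of $3$ per triangle, contributing the $3\tau_1,3\tau_2,3\tau_3$ terms (the $\mathcal{T}_4$ contribution being absorbed below). For two triangles lying in classes among $\mathcal{T}_1,\mathcal{T}_2,\mathcal{T}_3$ I would invoke the constant cross bounds ($7$ whenever a $\mathcal{T}_1$ triangle is involved, $8$ otherwise), which account for $7\binom{\tau_1}{2}$, $8\binom{\tau_2}{2}$, $8\binom{\tau_3}{2}$, $7\tau_1\tau_2$, $7\tau_1(\tau_3+\tau_4)$, $8\tau_2(\tau_3+\tau_4)$ and $8\tau_3\tau_4$. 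For blocks meeting $\mathcal{M}$ or $\mathcal{I}$ I would apply the ``sees'' bounds: an independent vertex sees at most two vertices of any triangle and a matching edge sees a triangle in a bounded way, producing $4\mu\tau_1$, $2\iota\tau_1$, $(2+3\mu)\tau_2$, $2\iota\tau_2$, $(3+3\mu)\tau_3$ and $(2+\iota)\tau_3$, together with $\iota\mu$ for the $\mathcal{I}$--$\mathcal{M}$ block and $\mu^2$ for the edges inside $\mathcal{M}$ (here one uses that three cross edges between two matching edges would yield an improving triangle, so each pair contributes at most two cross edges, giving $2\binom{\mu}{2}+\mu=\mu^2$). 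Since $\mathcal{T}_4$ is the dense class for which no good constant is available, all its internal edges are absorbed into the trivial estimate $\binom{3\tau_4}{2}$, while its interactions with $\mathcal{M}$ and $\mathcal{I}$ give $(3\mu+3)\tau_4$ and $(\iota+2)\tau_4$; these are precisely the terms distinguishing $h$ from $f$. Summing all $21$ block bounds reproduces $f$ together with $\iota\mu$, $\mu^2$ and the four $\mathcal{T}_4$-specific terms, which is by definition $h(t_1,t_2,t_3,t_4,m,i)$.

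The main obstacle is not the final summation, which is pure bookkeeping, but the verification of the per-block constants, and in particular the asymmetry between $\mathcal{T}_3$ and $\mathcal{T}_4$. The peeling definition of $\mathcal{T}_3$ (repeatedly removing from $D$ a triangle sending at most $8(|D|-1)$ edges to the rest of $D$) is engineered exactly so that the $\mathcal{T}_3$-internal and $\mathcal{T}_3$--$\mathcal{T}_4$ blocks obey the clean $8\binom{\tau_3}{2}+8\tau_3\tau_4$ estimate, whereas $\mathcal{T}_4$ inherits only the trivial bound; making these two estimates interlock without double counting is the delicate point. Likewise, the sharpening of the $\mathcal{T}_1$ constants from $8$ to $7$ and from $6$ to $4$ rests on the defining property of $\mathcal{T}_1$ (being seen by at least two matching edges), so one must confirm that the re-tiling arguments underlying Lemmas~\ref{LEM:upper-bound-each-part-1}--\ref{LEM:upper-bound-each-part-3} respect lexicographic maximality in the correct priority order, maximizing $|\mathcal{T}|$ before $|\mathcal{M}|$. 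Once these block bounds are in hand, the lemma follows immediately by addition.
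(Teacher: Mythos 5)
Your proposal is correct and follows exactly the route the paper intends: Lemma~\ref{LEMMA:upper-bound-H-h1-ABHP} is imported from~\cite{ABHP15}, and the paper's own justification is precisely that $h$ is the sum of the $21$ within/between block bounds supplied by Lemmas~\ref{LEM:upper-bound-each-part-1}--\ref{LEM:upper-bound-each-part-3}, with the trivial bound $\binom{3\tau_4}{2}$ absorbing $e(\mathcal T_4)$ and the combined $\mathcal{T}_j$--$\mathcal{M}$ and $\mathcal{T}_j$--$\mathcal{I}$ estimates split between $f$ and $h-f$. Your block-by-block accounting reproduces the definition of $h$ term for term, so nothing further is needed.
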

Back to $h$, the following equalities can be derived through straightforward calculations. These results demonstrate that $h$ attains its maximum value at the boundary.
\begin{align}
    h(\tau_1-x,\tau_2+x,\tau_3,\tau_4,\mu,\iota)-h&=\frac{1}{2} x (x+2 \tau_2+2 \tau_3+2 \tau_4-2 \mu+3)\label{Eq:h1-2-1};\\
    h(\tau_1+x,\tau_2,\tau_3-x,\tau_4,\mu,\iota)-h&=\frac{1}{2} x (x-2 \tau_2-2\tau_3-2\tau_4+2 \mu+2 \iota-9)\label{Eq:h1-3-1};\\
    h(\tau_1,\tau_2+x,\tau_3-x,\tau_4,\mu,\iota)-h&=(\iota-3) x\label{Eq:h1-3-2};\\
    h(\tau_1+x,\tau_2,\tau_3,\tau_4-x,\mu,\iota)-h&=x (x-\tau_2-\tau_3-2 \tau_4+\mu+\iota-4)\label{Eq:h1-4-1};\\
    h(\tau_1,\tau_2+x,\tau_3,\tau_4-x,\mu,\iota)-h&=\frac{1}{2} x (x-2 \tau_4+2\iota-5)\label{Eq:h1-4-2};\\
    h(\tau_1,\tau_2,\tau_3-x,\tau_4+x,\mu,\iota)-h&=\frac{1}{2} x (x+2 \tau_4-1)\label{Eq:h1-4-3}.
\end{align}

A straightforward calculation demonstrates the following two facts. 
\begin{fact}\label{FACT:upper-bound-f+im+m^2}
    For positive integers $n,t,m,i$ with $2m+i=n-3(t+1)$, $t\le\frac{1}{9}(2n-6)-2$ and $n$ large enough. The following hold:
   \begin{enumerate}[label=\alph*)]
       \item\label{eq:f1+im+m2-t2=t+1} $h(0,t+1,0,0,m,i)=f(0,t+1,0,0,m,i)+im+m^2\le e_1(n,t)-\frac{n}{9}-\frac{49}{12}$;
       \item\label{eq:f1+im+m2-t1=t+1} $h(t+1,0,0,0,m,i)=f(t+1,0,0,0,m,i)+im+m^2\le e_1(n,t)+\frac14 (-i^2+2n-2 t-2)$.
   \end{enumerate}
\end{fact}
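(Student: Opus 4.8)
The plan is to prove both inequalities by directly substituting the prescribed arguments into the definitions of $f$ and $h$, eliminating one free variable through the linear relation $2m+i=n-3(t+1)$, and reducing each claim to a polynomial inequality in $n$ and $t$ on the interval $1\le t\le \tfrac{1}{9}(2n-6)-2$. Throughout I write $s\coloneqq n-3(t+1)=2m+i$ and use identities like $8\binom{t+1}{2}=4t(t+1)$ to keep everything in closed form; the only genuinely quadratic dependence is on $m$ (equivalently on $i$), which I handle by completing the square.

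For part b), substitute $\tau_1=t+1$ and $\tau_2=\tau_3=\tau_4=0$. The surviving terms of $f$ are $4\mu(t+1)+2\iota(t+1)+7\binom{t+1}{2}+3(t+1)$, and $h$ adds $\iota\mu+\mu^2$. Replacing $\iota$ by $s-2\mu$ makes the linear term $4\mu(t+1)$ cancel against $-2\cdot 2\mu(t+1)$, while $\iota\mu+\mu^2=m(i+m)$ collapses to $\tfrac14(s^2-i^2)$. Hence $h(t+1,0,0,0,m,i)$ has the form $A(n,t)+\tfrac{s^2}{4}-\tfrac{i^2}{4}$, and the target $e_1(n,t)+\tfrac14(-i^2+2n-2t-2)$ carries the same $-\tfrac{i^2}{4}$. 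The $i$-dependence cancels on both sides and part b) reduces to the single polynomial inequality $A(n,t)+\tfrac{s^2}{4}\le e_1(n,t)+\tfrac{n-t-1}{2}$, which I expect to hold with a small nonnegative slack (at most $\tfrac14$) governed by the parity of $n-t$ in the floor term of $e_1$.

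For part a), substitute $\tau_2=t+1$ and $\tau_1=\tau_3=\tau_4=0$, giving $h(0,t+1,0,0,m,i)=(t+1)(2i+3m+4t+5)+im+m^2$. Using $2i+3m=2s-m$ and $im+m^2=m(s-m)$, this rewrites as $(t+1)(2s+4t+5)+m\bigl(n-4t-4-m\bigr)$, whose $m$-dependence is a downward parabola. Bounding $m(n-4t-4-m)\le\tfrac14(n-4t-4)^2$ and simplifying with $s=n-3t-3$ yields the closed form $\tfrac{n^2}{4}+2t^2+5t+3$ for the maximum of $h$. Comparing against $e_1(n,t)-\tfrac{n}{9}-\tfrac{49}{12}$ then reduces part a) to showing $\phi(t)\coloneqq\tfrac{nt}{2}-\tfrac{9t^2}{4}-\tfrac{11t}{2}-\tfrac{n}{9}-\tfrac{85}{12}\ge 0$; since $\phi$ is concave in $t$, one checks nonnegativity at the two ends of the admissible range, the lower end $t=1$ being immediate for $n$ large.

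The main obstacle is the upper end $t\approx\tfrac{1}{9}(2n-6)-2$, which lies just below the critical value $\tfrac{2n-6}{9}$ at which $E_1(n,t)$ and $E_2(n,t)$ have equal size; there the slack $\phi$ is only of order $\tfrac{n}{9}$, and evaluating $\phi$ at the \emph{real} number $\tfrac{1}{9}(2n-6)-2$ is in fact slightly negative (it equals $-\tfrac{101}{12}$). Consequently the argument cannot treat $t$ as a real variable: it must exploit that $t$ is an integer strictly below this value, so that $t\le\bigl\lfloor\tfrac{1}{9}(2n-6)-2\bigr\rfloor$ raises $\phi(t)$ by a term of order $n$ (the derivative of $\phi$ near the endpoint is of size $\tfrac{n}{2}$), together with the exact floor value of $\lceil\tfrac{n-t}{2}\rceil\lfloor\tfrac{n-t}{2}\rfloor$ in $e_1(n,t)$ and the integrality loss in $m(n-4t-4-m)$ when $n-4t-4$ is odd. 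Controlling these corrections at the endpoint is the crux; away from it the inequality is comfortable, and this is precisely the delicacy alluded to in the remark following Theorem~\ref{THM:main-first-interval} about the two near-extremal constructions at $\tfrac{2n-6}{9}$.
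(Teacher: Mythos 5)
Your treatment of part b) is correct and is precisely the ``straightforward calculation'' the paper (which prints no proof of this Fact) has in mind: after substituting $\tau_1=t+1$ and eliminating $i$ via $2m+i=n-3(t+1)$, the $-\tfrac{i^2}{4}$ terms cancel on both sides and the remaining polynomial inequality holds with slack $0$ or $\tfrac14$ according to the parity of $n-t$, exactly as you predict.

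For part a), however, your own computation exposes a gap that your proposed fix does not close. The reduction to $\phi(t)\ge 0$ with $\phi(t)=\tfrac{nt}{2}-\tfrac{9t^2}{4}-\tfrac{11t}{2}-\tfrac{n}{9}-\tfrac{85}{12}$ is right, and so is the value $\phi\bigl(\tfrac{2n-24}{9}\bigr)=-\tfrac{101}{12}$; but the rescue you propose --- that $t$ is an integer \emph{strictly} below $\tfrac19(2n-6)-2=\tfrac{2n-24}{9}$ --- is unavailable whenever $n\equiv 3\pmod 9$, since then the endpoint is itself an admissible integer value of $t$. Nor can the two parity corrections you invoke contribute: taking $n$ even makes $m=\tfrac{n-4t-4}{2}$ and $i=t+1$ admissible positive integers attaining the exact maximum of $m(n-4t-4-m)$, and $n-t$ is then even so the floor in $e_1$ gives nothing back. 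Concretely, $n=1002$, $t=220$, $m=59$, $i=221$ satisfies every hypothesis, yet $h(0,221,0,0,59,221)=348904$ while $e_1(1002,220)-\tfrac{1002}{9}-\tfrac{49}{12}=349011-\tfrac{1385}{12}<348896$. So part a) as literally stated fails at the top of the admissible range (the sharp bound there is $h(0,t+1,0,0,m,i)\le e_1(n,t)-\tfrac{n}{9}+\tfrac{13}{3}$, attained in this example), and no argument can prove it as written; the constant must be weakened. This is harmless downstream --- every invocation of part a) in the paper only needs $h(0,t+1,0,0,m,i)<e_1(n,t)+2$ --- but your proof should either record the corrected constant or restrict $t$ away from the endpoint, rather than assert that integrality of $t$ saves the stated bound.
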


\begin{fact}\label{LEM:EQ-f1-max-t1-t2}
    Given non-negative integers $\tau_{1}, \tau_{2}, \tau_{3},\tau_{4},\mu,\iota$, then
    \begin{align*}
        f\left(\tau_{1}, \tau_{2}, \tau_{3}, \tau_{4}, \mu, \iota\right) \leq \max \left\{f\left(\tau_{1}+\tau_{2}, 0, \tau_{3}, \tau_{4}, \mu, \iota\right), f\left(0, \tau_{1}+\tau_{2}, \tau_{3}, \tau_{4}, \mu, \iota\right)\right\}, 
    \end{align*}
    with the equality holds only if  $\tau_{1}\tau_{2}=0$. Moreover, 
    \begin{align*}
        h(\tau_1,\tau_2,\tau_3,\tau_4,\mu,\iota)\le h(\tau_1,\tau_2,0,\tau_3+\tau_4,\mu,\iota).
    \end{align*}
\end{fact}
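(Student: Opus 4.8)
The plan is to treat both inequalities as one-variable statements obtained by moving mass between two coordinates while freezing the remaining four.

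For the first inequality, I would fix $s \coloneqq \tau_1 + \tau_2$ together with $\tau_3,\tau_4,\mu,\iota$, and regard $f$ as the single-variable function $\phi(\tau_1) \coloneqq f(\tau_1,\, s-\tau_1,\, \tau_3,\tau_4,\mu,\iota)$ on the interval $[0,s]$. Inspecting the definition of $f$, the only terms that are not affine in the pair $(\tau_1,\tau_2)$ are $7\binom{\tau_1}{2}$, $8\binom{\tau_2}{2}$, and the cross term $7\tau_1\tau_2$; every other summand (including all the interactions $7\tau_1(\tau_3+\tau_4)$, $8\tau_2(\tau_3+\tau_4)$, $4\mu\tau_1$, $(2+3\mu)\tau_2$, etc.) is linear in $\tau_1$ or in $\tau_2$, and hence cannot influence convexity. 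Substituting $\tau_2 = s-\tau_1$ into the quadratic part $\tfrac{7}{2}\tau_1^2 + 7\tau_1\tau_2 + 4\tau_2^2$ and simplifying, it collapses to $\tfrac{1}{2}\tau_1^2 - s\tau_1 + 4s^2$. Since the leading coefficient $\tfrac12$ is strictly positive, $\phi$ is a strictly convex quadratic on $[0,s]$; its maximum over that interval is therefore attained at an endpoint and is strictly larger than its value at every interior point. This yields $f(\tau_1,\tau_2,\tau_3,\tau_4,\mu,\iota) \le \max\{f(s,0,\tau_3,\tau_4,\mu,\iota),\, f(0,s,\tau_3,\tau_4,\mu,\iota)\}$, with equality only when $\tau_1 \in \{0,s\}$, i.e.\ $\tau_1\tau_2 = 0$. (Equivalently, one can read this off from identity~\eqref{Eq:h1-2-1}, since the terms distinguishing $h$ from $f$ do not involve $\tau_1,\tau_2$, so the $h$- and $f$-differences coincide under this exchange; checking that at least one of the two endpoint differences is nonnegative recovers the same conclusion.)

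For the ``moreover'' part, I would invoke the already-recorded identity~\eqref{Eq:h1-4-3} with $x = \tau_3$, which moves all of $\tau_3$ into $\tau_4$ and gives
\begin{align*}
    h(\tau_1,\tau_2,0,\tau_3+\tau_4,\mu,\iota) - h(\tau_1,\tau_2,\tau_3,\tau_4,\mu,\iota) = \tfrac{1}{2}\,\tau_3\,(\tau_3 + 2\tau_4 - 1).
\end{align*}
For non-negative integers this quantity is non-negative: it vanishes when $\tau_3 = 0$, and when $\tau_3 \ge 1$ one has $\tau_3 + 2\tau_4 - 1 \ge 0$. This is precisely $h(\tau_1,\tau_2,\tau_3,\tau_4,\mu,\iota) \le h(\tau_1,\tau_2,0,\tau_3+\tau_4,\mu,\iota)$.

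The computations are routine; the only genuinely load-bearing point is the sign of the leading coefficient $\tfrac12$ in the collapsed quadratic, since a negative coefficient would place the maximum in the interior and reverse the claim. Thus the main (mild) obstacle is simply the correct isolation of the quadratic-in-$(\tau_1,\tau_2)$ part of $f$ and the verification that its restriction to $\tau_1+\tau_2 = s$ is convex rather than concave.
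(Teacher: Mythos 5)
Your proof is correct and matches the paper's intent: the paper gives no explicit argument (calling it a straightforward calculation and supplying the exchange identities \eqref{Eq:h1-2-1} and \eqref{Eq:h1-4-3} for exactly this purpose), and your collapsed quadratic $\tfrac12\tau_1^2 - s\tau_1 + 4s^2$ and the sign analysis $\tfrac12\tau_3(\tau_3+2\tau_4-1)\ge 0$ both check out. Nothing further is needed.
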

Let
\begin{align*}
    g(\tau_1,\tau_2,\tau_3,\tau_4,\mu,\iota)\coloneqq  f+\iota\mu +\mu^2+(3\mu+3)\tau_4+(\iota+2)\tau_4+8\binom{\tau_4}{2}+10\tau_4-28. 
\end{align*}
Then 
\begin{align}
    g(\tau_1,\tau_2+x,\tau_3,\tau_4-x,\mu,\iota)- g&=(\iota-10) x\label{Eq:gs1-4-2};\\
    g(\tau_1,\tau_2,\tau_3-x,\tau_4+x,\mu,\iota)- g&=7 x\label{Eq:gs1-3-4}.
\end{align}
Moreover, Allen et al.~\cite{ABHP15} established a sharper upper bound on the number of edges in $H$ by analyzing the structural properties of $\mathcal{T}_4$.
\begin{lemma}[{\cite[Lemma~6.1]{ABHP15}}]\label{LEM:T4-sparse}
    Let $H$ be a $(t+1)K_3$-free graph  with $n$ vertices, $(\mathcal T, \mathcal M, \mathcal I)$ be a maximal tiling triple of $H$ and $(\mathcal T_1,\mathcal T_2,\mathcal T_3,\mathcal T_4)$  be an ideal partition of $\mathcal T$. If  $e(\mathcal T_{4}) \le8\binom{t_4}{2}+10 t_{4}-28$, then 
    \begin{align*}
        |H |\le  g\left(t_{1}, t_{2}, t_{3}, t_{4}, m, i\right).
    \end{align*}
\end{lemma}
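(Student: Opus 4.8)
The plan is to exploit the fact that the upper bound $h$ from Lemma~\ref{LEMMA:upper-bound-H-h1-ABHP} is assembled additively: it is the sum, over the parts of the partition $(\mathcal T_1,\mathcal T_2,\mathcal T_3,\mathcal T_4,\mathcal M,\mathcal I)$, of separate estimates for the edges lying within and between these parts, supplied by the structural lemmas of Allen--B\"ottcher--Hladk\'y--Piguet (Lemmas~\ref{LEM:upper-bound-each-part-1},~\ref{LEM:upper-bound-each-part-2} and~\ref{LEM:upper-bound-each-part-3}). Among all the summands comprising $h$, the single term $\binom{3\tau_4}{2}$ is the only one that accounts for edges internal to $\mathcal T_4$; it arises from the trivial estimate $e(\mathcal T_4)\le\binom{3t_4}{2}$ for the $3t_4$ vertices spanned by $\mathcal T_4$. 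Every other summand --- the whole of $f$, together with $\iota\mu+\mu^2+(3\mu+3)\tau_4+(\iota+2)\tau_4$ --- bounds an edge set disjoint from the interior of $\mathcal T_4$.

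Granting this, I would first isolate from the proof of Lemma~\ref{LEMMA:upper-bound-H-h1-ABHP} the finer estimate
\begin{align*}
    |H|-e(\mathcal T_4)\le h(t_1,t_2,t_3,t_4,m,i)-\binom{3t_4}{2},
\end{align*}
which simply omits the trivial $\mathcal T_4$-internal term. I would then substitute the sharper hypothesis $e(\mathcal T_4)\le 8\binom{t_4}{2}+10t_4-28$ for the trivial bound on the internal edges, obtaining
\begin{align*}
    |H|\le h(t_1,t_2,t_3,t_4,m,i)-\binom{3t_4}{2}+8\binom{t_4}{2}+10t_4-28.
\end{align*}
By the definitions of $h$ and $g$, the right-hand side is exactly $g(t_1,t_2,t_3,t_4,m,i)$, since $g$ is obtained from $h$ precisely by replacing the summand $\binom{3\tau_4}{2}$ with $8\binom{\tau_4}{2}+10\tau_4-28$. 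This yields the claim.

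The main point requiring care --- and the step I expect to be the genuine obstacle --- is the first display: that the bound on $|H|-e(\mathcal T_4)$ really is independent of how many edges run inside $\mathcal T_4$. To justify it I would revisit the component-wise bounds of the structural lemmas and confirm that each term other than $\binom{3\tau_4}{2}$ counts only edges lying wholly outside $V(\mathcal T_4)$ or crossing between $\mathcal T_4$ and another part. In particular the cross-contributions $7\tau_1(\tau_3+\tau_4)$, $8\tau_2(\tau_3+\tau_4)$ and $8\tau_3\tau_4$ inside $f$, together with the boundary terms $(3\mu+3)\tau_4$ and $(\iota+2)\tau_4$, must be seen to depend only on the maximality of the tiling triple and the ``minimum-degree'' condition defining $\mathcal T_4$, and not on $e(\mathcal T_4)$ itself. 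Once this clean separation is confirmed, the substitution above goes through verbatim.
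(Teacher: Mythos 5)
Your proposal is correct and is exactly the intended argument: the paper gives no proof of this lemma (it is quoted directly from~\cite[Lemma~6.1]{ABHP15}), and $g$ is by construction obtained from $h$ by swapping the unique $\mathcal T_4$-internal summand $\binom{3\tau_4}{2}$ for the hypothesized bound $8\binom{\tau_4}{2}+10\tau_4-28$, with every other summand of $h$ bounding an edge set disjoint from the interior of $V(\mathcal T_4)$ via Lemmas~\ref{LEM:upper-bound-each-part-1}--\ref{LEM:upper-bound-each-part-3}. Your identification of the one point needing care --- that the component-wise bounds really are additive and independent of $e(\mathcal T_4)$ --- is the right sanity check, and it holds here.
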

The next lemma can be derived from Lemmas~6.1 and~6.2 and the proof of Lemma~6.2 in~\cite{ABHP15}.
\begin{lemma}[\cite{ABHP15}]\label{LEM:T4-dense}
    Let $H$ be a $(t+1)K_3$-free graph  with $n$ vertices, $(\mathcal T, \mathcal M, \mathcal I)$ be a maximal tiling triple of $H$ and $(\mathcal T_1,\mathcal T_2,\mathcal T_3,\mathcal T_4)$  be an ideal partition of $\mathcal T$. Suppose that $\sum_{i=1}^4t_i=t$ and $e(\mathcal T_{4}) \ge8\binom{t_4}{2}+10 t_{4}-27$. The following statements hold.  
    \begin{enumerate}[label=(\roman*)]
        \item\label{itm:t4-dense-few} If $t_{4}<\frac{2 m+i}{3}$, then  
            \begin{align*}
            |H |\le  h\left(t_{1}, t_{2}, t_{3}, t_{4}, m, i\right).
            \end{align*}
         \item\label{itm:t4-dense-more-t1=1} If $t_{4} =\frac{2 m+i}{3}$ and $t\le \frac{2n-6}{9}-2$, then  
            \begin{align*}
            |H |\le e_1(n,t)-\frac{1}{2000}n^2+O(n).
            \end{align*}
         \item\label{itm:t4-dense-more-t1>1} If $t_{4} > \frac{2 m+i}{3}$ and  $n/5\le t\le3n/10$, then  
        \begin{align*}
            |H |\le \binom{2t+1}{2}+(2t+1)(n-2t-1)-\frac{n^2}{100}+O(n).
        \end{align*}
     \end{enumerate}
\end{lemma}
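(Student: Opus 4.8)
The plan is to dispatch the three cases separately, leaning on the structural analysis of the dense core $\mathcal{T}_4$ due to Allen--B\"{o}ttcher--Hladk\'{y}--Piguet. Part \ref{itm:t4-dense-few} needs no new argument: its conclusion $|H|\le h(t_1,t_2,t_3,t_4,m,i)$ is precisely the unconditional bound of Lemma~\ref{LEMMA:upper-bound-H-h1-ABHP}, so the hypotheses $e(\mathcal{T}_4)\ge 8\binom{t_4}{2}+10t_4-27$ and $t_4<\tfrac{2m+i}{3}$ only serve to mark off the regime in which no improvement over $h$ is claimed. (Together with Lemma~\ref{LEM:T4-sparse} for the complementary sparse case, this explains why the dichotomy at the threshold $8\binom{t_4}{2}+10t_4-28$ is natural.)

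For parts \ref{itm:t4-dense-more-t1=1} and \ref{itm:t4-dense-more-t1>1} the key reduction is to set $W:=V(\mathcal{T}_4)\cup V(\mathcal{M})\cup V(\mathcal{I})$, a vertex set of size $3t_4+2m+i$, where $2m+i=n-3t$ since $\sum_i t_i=t$. Because $\mathcal{T}_1\cup\mathcal{T}_2\cup\mathcal{T}_3$ already supplies $t-t_4$ pairwise disjoint triangles disjoint from $W$ and $H$ is $(t+1)K_3$-free, the induced subgraph $H[W]$ must be $(t_4+1)K_3$-free. This converts the local density of $\mathcal{T}_4$ into a global tiling restriction on $W$. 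I would then invoke the dense-case structure of \cite[Lemma~6.2]{ABHP15} and its proof: the hypothesis $e(\mathcal{T}_4)\ge 8\binom{t_4}{2}+10t_4-27$ enforces the ``minimum-degree'' condition between the triangles of $\mathcal{T}_4$, which pins down the near-extremal shape of $H[W]$. Writing $|H|=e_H(V(H)\setminus W)+e_H(V(H)\setminus W,\,W)+e(H[W])$, the first two contributions are governed by the $f$-type terms, while $e(H[W])$ is controlled by the $(t_4+1)K_3$-free bound. The point is that once $|V(\mathcal{T}_4)|\ge|V(\mathcal{M})\cup V(\mathcal{I})|$, i.e.\ $t_4\ge\tfrac{2m+i}{3}$, one has $|W|\le 6t_4$ and hence $t_4/|W|\ge\tfrac16$, a fraction large enough that $(t_4+1)K_3$-freeness drops $e(H[W])$ a positive proportion below the clique term $\binom{3t_4}{2}$ built into $h$. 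Feeding in the boundary relation $t_4=\tfrac{2m+i}{3}$ (so $|W|=6t_4$) with $t\le\tfrac{2n-6}{9}-2$ for \ref{itm:t4-dense-more-t1=1}, respectively $t_4>\tfrac{2m+i}{3}$ with $n/5\le t\le 3n/10$ for \ref{itm:t4-dense-more-t1>1}, and comparing against $e_1(n,t)$, respectively $\binom{2t+1}{2}+(2t+1)(n-2t-1)$, should yield the quadratic deficits $-\tfrac{1}{2000}n^2$ and $-\tfrac{1}{100}n^2$.

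I expect the main obstacle to be extracting the \emph{sharp} quadratic deficit rather than a merely positive one. The crude estimate $e(H[\mathcal{T}_4])\le\binom{3t_4}{2}$ inside $h$ is far too generous in the dense regime, and replacing it requires the precise book/near-clique description of $\mathcal{T}_4$ forced by the min-degree condition, followed by a careful optimization of the interface edges $e(\mathcal{T}_4,\mathcal{M}\cup\mathcal{I})$ and the internal edges $e(\mathcal{M}\cup\mathcal{I})$ subject to $2m+i=n-3t$. Tracking these contributions accurately enough to certify explicit constants such as $\tfrac{1}{2000}$ and $\tfrac{1}{100}$, rather than an unspecified $\Theta(n^2)$ saving, is where the bookkeeping concentrates; the redistribution of triangles among $\mathcal{T}_1,\dots,\mathcal{T}_4$ needed to normalize the configuration before the extremal comparison should be handled by the difference formulas \eqref{Eq:h1-2-1}--\eqref{Eq:h1-4-3} together with Fact~\ref{LEM:EQ-f1-max-t1-t2}.
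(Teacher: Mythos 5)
The first thing to say is that the paper does not actually prove this lemma: it is imported from~\cite{ABHP15}, with only the remark that it ``can be derived from Lemmas~6.1 and~6.2 and the proof of Lemma~6.2'' there, so there is no internal argument to compare you against. Your reading of part~\ref{itm:t4-dense-few} is correct and is the only part that needs no work: its conclusion is literally the unconditional bound of Lemma~\ref{LEMMA:upper-bound-H-h1-ABHP}, and the hypotheses merely delimit the regime. Your observation that $H[W]$ is $(t_4+1)K_3$-free for $W=V(\mathcal{T}_4)\cup V(\mathcal{M})\cup V(\mathcal{I})$ is also correct.

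However, for parts~\ref{itm:t4-dense-more-t1=1} and~\ref{itm:t4-dense-more-t1>1} the mechanism you propose has a genuine quantitative gap: the bound $e(H[W])\le \mathrm{ex}(|W|,(t_4+1)K_3)$ is \emph{weaker}, not stronger, than what $h$ already allots to the within-$W$ edges, so it cannot produce any deficit, let alone the constants $\tfrac{1}{2000}$ and $\tfrac{1}{100}$. Concretely, at the boundary $3t_4=2m+i$ with $i=0$ one has $|W|=6t_4$ and $\mathrm{ex}(6t_4,(t_4+1)K_3)=\binom{t_4}{2}+5t_4^2+\lceil\tfrac{5t_4}{2}\rceil\lfloor\tfrac{5t_4}{2}\rfloor=\tfrac{47}{4}t_4^2+O(t_4)$, whereas the within-$W$ budget in $h$, namely $m^2+im+3mt_4+it_4+\binom{3t_4}{2}+O(n)$, is at most $\tfrac{45}{4}t_4^2+O(t_4)$; your route therefore \emph{exceeds} $h$ by $\Omega(t_4^2)$. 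The failure is starkest in part~\ref{itm:t4-dense-more-t1>1}: taking $t_1=t_2=t_3=0$ and $t=t_4=3n/10$ gives $W=V(H)$, and your bound degenerates to the trivial Tur\'{a}n bound $\mathrm{ex}(n,(t+1)K_3)=\binom{2t+1}{2}+(2t+1)(n-2t-1)$, i.e.\ exactly the stated benchmark with no $-n^2/100$ term. The quadratic saving cannot come from $(t_4+1)K_3$-freeness of $H[W]$ alone; it has to come from the maximality of the tiling triple and the definition of the ideal partition, which cap $e(\mathcal{T}_4,\mathcal{M}\cup\mathcal{I})$ at roughly $(3m+i)t_4$ rather than the $\approx(6m+2i)t_4$ that a generic $(t_4+1)K_3$-free graph on $W$ could realize (a triangle of $\mathcal{T}_4$ is, by construction, not seen by two $\mathcal{M}$-edges, nor by an $\mathcal{M}$-edge together with an $\mathcal{I}$-vertex). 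That incompatibility between the Tur\'{a}n extremal configuration on $W$ and the tiling structure is precisely the content of {\cite[Lemma~6.2]{ABHP15}}, which your sketch defers to as a black box — so the portion of your argument that goes beyond the citation does not work, and the portion that would deliver the claimed constants is not supplied.
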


We remark that in the original paper \cite{ABHP15}, the bound on $|H |$ involves a more complex expression, requiring the introduction of additional auxiliary functions. However, the simplified version presented in Lemma \ref{LEM:T4-dense} above suffices for our purposes.

We end this section with the classical Andr\'{a}sfai--Erd\H{o}s--S\'{o}s Theorem for triangles. 

\begin{theorem}[\cite{AES74}]\label{THM:AES-Thm}
    Every $n$-vertex triangle-free graph with minimum degree greater than $2n/5$ must be bipartite.
\end{theorem}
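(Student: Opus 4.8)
The natural plan is a proof by contradiction built around a \emph{shortest odd cycle}. Suppose $G$ is an $n$-vertex triangle-free graph with $\delta(G)>2n/5$ that is not bipartite. Being non-bipartite, $G$ contains an odd cycle; let $C=v_0v_1\cdots v_{\ell-1}v_0$ (indices modulo $\ell$) be one of minimum length, so $\ell$ is odd and, since $G$ is triangle-free, $\ell\ge 5$. The first observation I would record is that $C$ is induced: any chord would split $C$ into two shorter cycles whose lengths sum to $\ell+2$, so exactly one of them is odd and strictly shorter than $C$, contradicting minimality. In particular $N(v_i)\cap V(C)=\{v_{i-1},v_{i+1}\}$ for every $i$.

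The key step is to show that every vertex $x\in V(G)$ has at most two neighbours on $C$. Suppose $x$ is adjacent to two vertices $v_i,v_j\in V(C)$. The two arcs of $C$ between $v_i$ and $v_j$ have lengths summing to the odd number $\ell$, so exactly one of them is odd; concatenating that odd arc with the path $v_i x v_j$ of length $2$ produces an odd cycle, whose length is at least $\ell$ by minimality. Hence the odd arc has length at least $\ell-2$, which forces the complementary even arc to have length exactly $2$, so $v_i$ and $v_j$ lie at distance $2$ on $C$. Since each vertex of $C$ has precisely two vertices at distance $2$, and for odd $\ell\ge 5$ those two are never themselves at distance $2$, no $x$ can have three pairwise-distance-$2$ neighbours on $C$; this yields the bound of two. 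I expect this neighbourhood analysis to be the main obstacle, since it hinges on carefully tracking the arc parities and ruling out both the triangle and the three-neighbour configurations.

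With the bound in hand the conclusion follows from a double count of the edges between $C$ and $V(G)\setminus V(C)$. On one hand, each of the $\ell$ vertices of $C$ has exactly two neighbours on $C$ (by inducedness) and degree greater than $2n/5$, so it sends more than $2n/5-2$ edges outside $C$, giving
\begin{align*}
e\bigl(V(C),V(G)\setminus V(C)\bigr) > \ell\left(\frac{2n}{5}-2\right).
\end{align*}
On the other hand, each of the $n-\ell$ vertices outside $C$ contributes at most two such edges, so
\begin{align*}
e\bigl(V(C),V(G)\setminus V(C)\bigr) \le 2(n-\ell).
\end{align*}
Combining the two inequalities gives $\ell\left(\tfrac{2n}{5}-2\right)<2(n-\ell)$, which simplifies to $\tfrac{2n}{5}\,\ell<2n$, i.e. $\ell<5$, contradicting $\ell\ge 5$. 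Hence $G$ contains no odd cycle and is therefore bipartite.
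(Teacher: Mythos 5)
Your proof is correct: the shortest-odd-cycle argument (inducedness of $C$, every vertex having at most two neighbours on $C$ with any two such neighbours at distance exactly $2$, then the double count forcing $\ell<5$) is the standard elementary proof of the triangle case of the Andr\'{a}sfai--Erd\H{o}s--S\'{o}s theorem, and all the steps, including the parity analysis of the arcs and the final inequality, check out. The paper does not prove this statement itself --- it is quoted from \cite{AES74} as a known black box --- so there is no internal argument to compare against; your write-up would serve as a self-contained proof of the cited result.
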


\section{A stability result}\label{SEC:Stability}
In this section, we prove the following stability theorem that characterizes the approximate structure of a $(t+2)K_3$-free graph $H$ with $n$ vertices and $N$ edges. 
\begin{theorem}\label{THM:STABILITY}
    For every $\delta > 0$ there exists $N_{\ref{THM:STABILITY}} = N_{\ref{THM:STABILITY}}(\delta)$ such that the following holds for every $n \ge N_{\ref{THM:STABILITY}}$. 
    Let $H$ be a $(t+2)K_{3}$-free graph on $n$ vertices with $N$ edges, where $t$ is an integer in the interval $\left[ \delta n,~\frac{2n-6}{9} - 2 \right]$. 
    Suppose that $(\mathcal T, \mathcal M, \mathcal I)$ is a maximal tiling triple of $H$ and $(\mathcal T_1,\mathcal T_2,\mathcal T_3,\mathcal T_4)$ is an ideal partition of $\mathcal T$. Then 
    \begin{enumerate}[label=(\roman*)]
        \item\label{THM:STABILITY-a} $t_1 \in [t-1,~t+1]$, 
        \item\label{THM:STABILITY-b} $m \in \left(\frac{n-3(t+1)}2 - \frac{\sqrt{2n}}{2},~\frac{n-3(t+1)}2\right]$, and 
        \item\label{THM:STABILITY-c} $i < \sqrt{2 n}$. 
    \end{enumerate}
\end{theorem}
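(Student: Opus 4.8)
The first task is to pin down $|\mathcal T| = t_1+t_2+t_3+t_4$. Since $(\mathcal T,\mathcal M,\mathcal I)$ is lexicographically maximal, $|\mathcal T|$ equals the maximum number of vertex-disjoint triangles in $H$. If $|\mathcal T|\le t$, then $H$ is $(t+1)K_3$-free and Theorem~\ref{THM:Turan-number-(t+1)K_3} (first regime, applicable as $t\le\frac{2n-6}{9}$) gives $|H|\le \mathrm{ex}(n,(t+1)K_3)=e_1(n,t)<N$, a contradiction; and $(t+2)K_3$-freeness forces $|\mathcal T|\le t+1$. Hence $\sum_j t_j=t+1$ and $2m+i=n-3(t+1)$. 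The engine of the whole argument is then the chain $e_1(n,t)+2=|H|\le h(t_1,t_2,t_3,t_4,m,i)$ from Lemma~\ref{LEMMA:upper-bound-H-h1-ABHP}, refined to $|H|\le g(\dots)$ or to the sharper bounds of Lemma~\ref{LEM:T4-dense} according to the edge density of $\mathcal T_4$, always compared against the extremal value $e_1(n,t)$.

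\textbf{Step 2: $t_2+t_3+t_4\le 2$ (conclusion (i)).} This is the heart of the proof; the bound $t_1\le t+1$ is trivial, so I must show $t_1\ge t-1$. The guiding principle is that any triangle kept outside $\mathcal T_1$ is costly by an amount of order $m=\tfrac{n-3(t+1)-i}{2}=\Theta(n)$. Concretely, relations~\eqref{Eq:h1-2-1} and~\eqref{Eq:h1-3-1} (reassigning $\mathcal T_2$- and $\mathcal T_3$-triangles to $\mathcal T_1$), together with the sparse-$\mathcal T_4$ comparison $h(t+1,0,0,0,m,i)-g(t_1,0,0,t_4,m,i)=(m+i)t_4-\binom{t_4}{2}-12t_4+28$ furnished by Lemma~\ref{LEM:T4-sparse}, all certify that moving the triangles of $\mathcal T_2\cup\mathcal T_3\cup\mathcal T_4$ into $\mathcal T_1$ raises the bound by $\Theta(n)$ per triangle so long as $t<n/4$ (using $(m+i)-\tfrac{t_4-1}{2}=\tfrac{n-4t-3+i}{2}+O(1)$). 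Against the budget $h(t+1,0,0,0,m,i)\le e_1(n,t)+\tfrac14(2n-2t-2)$ from Fact~\ref{FACT:upper-bound-f+im+m^2}\ref{eq:f1+im+m2-t1=t+1}, the inequality $e_1(n,t)+2\le|H|$ caps the number of triangles outside $\mathcal T_1$ by essentially $\tfrac{(n-t)/2}{m}=\tfrac{n-t}{\,n-3t-3-i\,}$, which is strictly below $3$ for every $t\le\frac{2n-6}{9}-2<\frac n4$ and large $n$; hence at most two. (Executing this as a genuine joint estimate over $t_2,t_3,t_4$ requires keeping the cross terms in~\eqref{Eq:h1-2-1}--\eqref{Eq:h1-4-1} lower order, which is legitimate once the dense case below is ruled out, so that $t_3,t_4$ are not simultaneously large.) Finally I exclude a dense $\mathcal T_4$: by Lemma~\ref{LEM:T4-dense} the cases~\ref{itm:t4-dense-more-t1=1} and~\ref{itm:t4-dense-more-t1>1} give $|H|\le e_1(n,t)-\Omega(n^2)$, impossible, while any remaining linear-sized dense $\mathcal T_4$ drives $H$ toward an $E_2$-type graph with $e_2(n,t)=\binom{2t+1}{2}+\lceil\tfrac n2\rceil\lfloor\tfrac n2\rfloor$ edges, and $e_2(n,t)<e_1(n,t)+2$ exactly because $t\le\frac{2n-6}{9}-2$ lies below the crossover $t=\tfrac{2n}{9}$ of $e_1$ and $e_2$. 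Combining both prongs yields $t_2+t_3+t_4\le 2$.

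\textbf{Step 3: the bounds on $i$ and $m$ (conclusions (iii) and (ii)).} With $t_2+t_3+t_4\le 2$ in hand, the relevant increments in~\eqref{Eq:h1-2-1},~\eqref{Eq:h1-3-1} and~\eqref{Eq:h1-4-1} for moving these at most two triangles into $\mathcal T_1$ are nonnegative (their correction terms are $O(1)$ while $m=\Theta(n)$), so $|H|\le h(t+1,0,0,0,m,i)$. Fact~\ref{FACT:upper-bound-f+im+m^2}\ref{eq:f1+im+m2-t1=t+1} then gives $e_1(n,t)+2\le e_1(n,t)+\tfrac14(-i^2+2n-2t-2)$, i.e.\ $i^2\le 2n-2t-10<2n$, so $i<\sqrt{2n}$, which is (iii). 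The identity $2m+i=n-3(t+1)$ now forces $m\le\tfrac{n-3(t+1)}2$ (as $i\ge0$) and $m=\tfrac{n-3(t+1)}2-\tfrac i2>\tfrac{n-3(t+1)}2-\tfrac{\sqrt{2n}}2$, which is (ii).

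\textbf{The main obstacle.} I expect the decisive difficulty to be the treatment of $\mathcal T_4$ in Step~2. The generic bound $h$ charges $\mathcal T_4$ as a clique on $3t_4$ vertices and therefore does \emph{not} by itself forbid a large dense $\mathcal T_4$; one is forced into the density dichotomy of Lemmas~\ref{LEM:T4-sparse}--\ref{LEM:T4-dense} and must argue that any linear-sized dense $\mathcal T_4$ makes $H$ resemble the second construction $E_2$, whose edge count is sub-extremal only thanks to the slack $t\le\frac{2n-6}{9}-2$. This is precisely the point at which the threshold $\tfrac{2n-6}{9}$ enters, and it is also what pins the constant in conclusion (i) to the sharp value $2$ rather than a larger absolute constant: near the crossover $E_2$ becomes competitive with $E_1$, which is exactly why the present method stops just short of the full first interval.
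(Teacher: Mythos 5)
Your overall architecture is the paper's: force $|\mathcal T|=t+1$, bound $|H|$ by $h$ (or by $g$, or by Lemma~\ref{LEM:T4-dense}, according to the density of $\mathcal T_4$), and compare with $N=e_1(n,t)+2$. Step~1 is correct, your identity $h(t+1,0,0,0,m,i)-g(t_1,0,0,t_4,m,i)=(m+i)t_4-\binom{t_4}{2}-12t_4+28$ checks out, and Step~3 would go through once Step~2 is secured. The genuine gap is in Step~2.

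Your central device there is a linear budget: each triangle outside $\mathcal T_1$ costs $\Theta(n)\approx m$, the surplus over $e_1(n,t)$ is at most $\tfrac14(2n-2t-2)$ by Fact~\ref{FACT:upper-bound-f+im+m^2}~\ref{eq:f1+im+m2-t1=t+1}, hence at most $\tfrac{n-t}{n-3t-3-i}<3$ such triangles. This linearizes a convex quadratic. By \eqref{Eq:h1-2-1} the change in $h$ from reassigning $x$ triangles between $\mathcal T_1$ and $\mathcal T_2$ is $\tfrac12x\bigl(x+2\tau_2+2\tau_3+2\tau_4-2\mu+3\bigr)$, so the marginal cost of the $k$-th triangle kept outside $\mathcal T_1$ is about $m-k$, not $m$; along the line $\sum_j\tau_j=t+1$ the function $h$ is therefore maximized at an \emph{endpoint}, and you must check $\bar t_1=t+1$ (i.e.\ $t_1=0$) as well as $\bar t_1=3$. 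That endpoint is not covered by your budget, which bounds only $h(t+1,0,0,0,m,i)$; it requires Fact~\ref{FACT:upper-bound-f+im+m^2}~\ref{eq:f1+im+m2-t2=t+1}, which you never invoke. More seriously, the assertion $m=\tfrac{n-3(t+1)-i}{2}=\Theta(n)$ that underlies your "cost $\Theta(n)$ per triangle" is part of conclusion~(ii): a priori $i$ can be as large as $n-3(t+1)$, in which case $m=o(n)$, the increments in \eqref{Eq:h1-2-1} change sign, and the maximum of $h$ sits at $t_1=0$. This is exactly why the paper proves $i<\sqrt{2n}$ \emph{before} $t_1\ge t-1$ in each case, using $|H|=N$ together with Fact~\ref{FACT:upper-bound-f+im+m^2}~\ref{eq:f1+im+m2-t2=t+1} to force $f(t+1,0,0,0,m,i)\ge f(0,t+1,0,0,m,i)$; your Steps~2 and~3 run these deductions in the reverse order, so Step~3 cannot supply the hypothesis Step~2 silently uses. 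Finally, even after $i$ is controlled, for $t>(n-18)/5$ the relevant quadratics $\alpha_j(\bar t_1)$ under the plain $h$-bound have their vertex inside $[3,t+1]$, so "maximized at $\bar t_1=3$" is not automatic; the paper needs the $g$-bound (with its $-(i-3)(t_3+t_4)$ and $7t_4$ corrections) and Lemma~\ref{LEM:T4-dense} in that range to handle the $\mathcal T_2$-triangles as well, not only to exclude a dense $\mathcal T_4$ as your sketch suggests.
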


\begin{proof}[Proof of Theorem~\ref{THM:STABILITY}]
Let $n$ be a sufficiently large integer. Let $H$ be a $(t + 2)K_3$-free graph with $n$ vertices and $N$ edges, $(\mathcal T, \mathcal M, \mathcal I)$ be a maximal tiling triple of $H$,  and $(\mathcal T_1,\mathcal T_2,\mathcal T_3,\mathcal T_4)$ be an ideal partition of $\mathcal T$. Then $|\mathcal T|=t+1$. It follows from $\sum_{j=1}^4 t_j=t+1$ and $n =3(t + 1)+2m+i$ that 
\begin{align}\label{EQ:m=f(n,t,i)}
    m=\frac{n-3(t + 1)-i}{2}\le \frac{n-3(t + 1)}{2}\quad \text{ and }\quad i=n-3(t+1)-2m.
\end{align}
By Lemma \ref{LEMMA:upper-bound-H-h1-ABHP}, we have $|H |\le h(t_1,t_2,t_3,t_4,m,i)$.

\textbf{Case 1.} $\delta n\le t\le (n-18)/5$.

It follows from Lemma \ref{LEMMA:upper-bound-H-h1-ABHP}, \eqref{Eq:h1-4-1} and \eqref{Eq:h1-4-3} that 
\begin{align*}
    |H|&\le h(t_1,t_2,0,t_3+t_4,m,i) \\
    &\le h(t_1+t_3+t_4,t_2,0,0,m,i)-(t_3+t_4)(m+i-t_2-t_3-t_4-4).
\end{align*}

By the fact $t\le (n-18)/5$ and \eqref{EQ:m=f(n,t,i)}, we have  
\begin{align*}
    m+i-t_2-t_3-t_4-4\ge \frac{n-5(t+1)-8}2\ge 0, 
\end{align*}
which together with Fact~\ref{LEM:EQ-f1-max-t1-t2} gives 
\begin{align}\label{EQ:upper-bound-|H |-samll-t}
    |H | &\le f(t_1+t_3+t_4,t_2,0,0,m,i)+im+m^2 \notag \\
    &\le\max\{f(t+1,0,0,0,m,i),f(0,t+1,0,0,m,i)\}+im+m^2. 
\end{align}

Using the fact that $|H |=N$ and Fact~\ref{FACT:upper-bound-f+im+m^2}~\ref{eq:f1+im+m2-t2=t+1}, we have $f(t+1,0,0,0,m,i)\ge f(0,t+1,0,0,m,i)$. Thus, by \eqref{EQ:upper-bound-|H |-samll-t} and Fact~\ref{FACT:upper-bound-f+im+m^2}~\ref{eq:f1+im+m2-t1=t+1}, we have 
\begin{align*}
    |H |\le  f(t+1,0,0,0,m,i)+im+m^2\le e_1(n,t)+\frac14 (-i^2 + 2 n-2 t-2).
\end{align*}

Applying the assumption $|H |=N$ once again, we obtain  $\frac14 (-i^2 + 2 n-2 t-2)\ge 2$. It follows from \eqref{EQ:m=f(n,t,i)} that 
\begin{align*}
     i<\sqrt{2n} \text{ and thus } \frac{n-3(t+1)-\sqrt{2n}}2<m\le\frac{n-3(t+1)}2. 
\end{align*}

In order to bound $t_1$, we give another upper bound on $|H |$. Recall that $|H |\le h(t_1,t_2,0,t_3+t_4,m,i)$ by Lemma \ref{LEMMA:upper-bound-H-h1-ABHP} and  Fact~\ref{LEM:EQ-f1-max-t1-t2}. 
It follows from \eqref{Eq:h1-4-2} and $\overline{t}_1=t+1-t_1$ that 
\begin{align*}
    |H |&\le  \max\{h(t_1,0,0,\overline{t}_1,m,i),h(t_1,\overline{t}_1,0,0,m,i)\}. 
\end{align*}

We claim that $t_1\ge t-1$. Suppose to the contrary that $\overline{t}_1\ge 3$. If $h(t_1,0,0,\overline{t}_1,m,i)\le h(t_1,\overline{t}_1,0,0,m,i)$, then 
\begin{align}\label{EQ:upper-bound-|H |-first-case-t1>=3}
    |H |\le h(t_1,\overline{t}_1,0,0,m,i)=\frac{1}{4} \left(-i^2+n^2+2 n t+2 n-t^2-4 t-3\right)+\alpha_1(\overline{t}_1),
\end{align}
where $\alpha_1(\overline{t}_1)=\frac12\overline{t}_1^2-\frac{1}{2} \overline{t}_1(  n-i-3 t-6)$. Recall that $\overline{t}_1\le t+1\le  (n-13)/5$. So, $\alpha_1(\overline{t}_1)$ reaches its  maximum when $\overline{t}_1=3$. So substituting $\alpha_1(3)$ into \eqref{EQ:upper-bound-|H |-first-case-t1>=3} yields 
\begin{align*}
    |H |&{\le}
    \frac{1}{4} \left(-(i-3)^2+n^2+2 n (t-2)-t^2+14 t+60\right)\\
    &\le \frac{1}{4} \left(n^2+2 n (t-2)-t^2+14 t+60\right)\\
    &\le e_1(n,t)-n+4t+O(1)<N,
\end{align*}
a contradiction.  Otherwise, 
\begin{align*}
    |H |\le h(t_1,0,0,\overline{t}_1,m,i)
    =\frac{ 1 }4\left(-i^2 + 2 n + n^2-4 t + 2 n t-t^2-3\right)+\alpha_2(\overline{t}_1),
\end{align*}
where $\alpha_2(\overline{t}_1)=\overline{t}_1^2-\frac{1}{2}\overline{t}_1(n+i-3t-11)$. Using $3\le \overline{t}_1\le t+1\le(n-13)/5$,  $\alpha_2(\overline{t}_1)$ reaches its  maximum when $\overline{t}_1=3$. Thus,  
\begin{align*}
    |H |
    &\le \frac{1}4\left(-i(i+6)+99 +n^2+2n(t-2)+14 t-t^2\right)\\
    &\le\frac{1}4\left(99 + n^2 + 2 n (t-2) + 14 t-t^2\right)\\
    &\le e_1(n,t)-n+4 t+O(1)<N,
\end{align*}
also a contradiction. 
Thus, we have $t_1\ge t-1.$\\

It remains to analyze the case when $t > (n-18)/5$. We proceed by dividing the remaining proof into two cases based on the number of edges in  $H[V(\mathcal{T}_4)]$.

{\bf {Case 2}}:  $t>(n-18)/5$ and $e(\mathcal T_4)\le8\binom{t_4}{2}+10t_4-28$.

By Lemma~\ref{LEM:T4-sparse}, we have 
    \begin{align*}
        |H |\le g(t_1,t_2,t_3,t_4,m,i).
    \end{align*}
Combining \eqref{Eq:gs1-4-2} with  \eqref{Eq:gs1-3-4},  one can  verify that
    \begin{align}\label{EQ:differ-gs-t2=t+1-t1}
        g\left(t_{1}, t_{2}, t_{3},t_{4}, m, i\right)-g(t_1,\overline{t}_1,0,0,m,i)=7t_4-(t_3+t_4)(i-3).
    \end{align}
Thus, we have
   \begin{align}
       |H | &\le g(t_1,\overline{t}_1,0,0,m,i)+7t_4-(t_3+t_4)(i-3)\notag\\
       &=f\left(t_{1},\overline{t}_1,0,0,m, i\right)+i m+m^2+7t_4-(t_3+t_4)(i-3)-28\label{EQ:upper-bound-|H |-gs1-f}\\
       &\le \max\{f\left(t+1,0,0,0,m, i\right),f\left(0,t+1,0,0,m, i\right)\}\notag\\
       &\quad+i m+m^2+7t_4-(t_3+t_4)(i-3)-28\label{EQ:upper-bound-|H |-gs1-max-f},
   \end{align}
where the last inequality holds by Fact~\ref{LEM:EQ-f1-max-t1-t2}. 

\textbf{Subcase 2.1.} $i\le12$.

In this case, it suffices to prove $t_1\ge t-1$. Suppose to the contrary that $\overline{t}_1\ge 3$. It follows from \eqref{EQ:m=f(n,t,i)} and $i\le12$ that 
\begin{align*}
    m\ge\frac{1}{2}(n-3(t+1))-6 \text{ and } 7t_4-(t_3+t_4)(i-3)\le 3t_3+10t_4\le 10\overline{t}_1
\end{align*}
By \eqref{EQ:upper-bound-|H |-gs1-f}, we have 
\begin{align*}
    |H | &\le f(t_1,\overline{t}_1,0,0,m,i)+i m+m^2+10\overline{t}_1-28\notag\\
    &\le \frac14(n^2-i^2-t^2+ 2 n t + 2 n -4 t -115)+\alpha_3(\overline{t}_1),
\end{align*}
where $\alpha_3\left(\overline{t}_1\right)=\frac{1}{2}\left(\overline{t}_1^2+(3t-n+i+26)\overline{t}_1\right)$.
Recall that $3\le \overline{t}_1\le t+1\le(2n-6)/9-1$, which means  $\alpha_3(\overline{t}_1)$ attains its maximum when  $\overline{t}_1=3$. Thus, 
\begin{align*}
    |H |
    &\le\frac{1}{4} \left(-(i-3)^2+n^2+2 n t-t^2+14t-4n+68\right)\\
    &\le\frac{1}{4} \left(n^2+2 n t-t^2+14t-4n+68\right)\\
    &\le e_1(n,t)-n + 4 t+O(1)
    <N,
\end{align*}
a contradiction.  

\textbf{Subcase 2.2.} $i\ge13$.

It follows from  $i\ge13$ that $7t_4-(t_3+t_4)(i-3)\le-10t_3-3t_4\le0$. Observe that $f\left(t+1,0,0,0,m, i\right)\ge f\left(0,t+1,0,0,m, i\right)$ by \eqref{EQ:upper-bound-|H |-gs1-f} and Fact~\ref{FACT:upper-bound-f+im+m^2}~\ref{eq:f1+im+m2-t2=t+1}. Thus, by \eqref{EQ:upper-bound-|H |-gs1-max-f} and Fact~\ref{FACT:upper-bound-f+im+m^2}~\ref{eq:f1+im+m2-t2=t+1}, we have
\begin{align*}
    |H |\le  f\left(t+1,0,0,0,m, i\right)+i m+m^2
    \le  e_1(n,t)+\frac14 (-i^2+2n-2 t-2).
\end{align*}
Using $|H|=N$, we have $\frac14(-i^2+2n-2 t-2)\ge2$, which together with \eqref{EQ:m=f(n,t,i)} gives
\begin{align*}
    i<\sqrt{2n} \quad \text{and then}\quad \frac{n-3(t+1)-\sqrt{2n}}2<m\le\frac{n-3(t+1)}2. 
\end{align*}
To derive a lower bound for  $t_1$, assume for a contradiction that  $\overline{t}_1\ge 3$. Then  by \eqref{EQ:differ-gs-t2=t+1-t1}, 
    \begin{align*}
       |H |
       &\le g(t_1,\overline{t}_1,0,0,m,i)\le\frac{1}{4} \left(-i^2+n^2+2 n t+2 n-t^2-4 t-115\right)+\alpha_4(\overline{t}_1),
    \end{align*}
where $\alpha_4(\overline{t}_1)=\frac{1}{2}\overline{t}_1^2-\frac{1}{2} \overline{t}_1(n-i-3 t-6)$ which attains its maximum when $\overline{t}_1=3$. Thus,
    \begin{align*}
        |H |&\le-\frac{1}{4}(i-3)^2+\frac{1}{4} \left(n^2+2 n (t-2)-t^2+14 t-52\right)\\
        &\le\frac{1}{4} \left(n^2+2 n (t-2)-t^2+14 t-52\right)\\
        &=e_1(n,t)-n + 4t+O(1)<N,
    \end{align*}
a contradiction.

       
\textbf{Case 3.} $t>(n-18)/5$ and  $e(\mathcal T_4)\ge8\binom{t_4}{2}+10t_4-27$.

Here, our strategy remains to bound the number of edges of $H$ via Lemma~\ref{LEM:T4-dense}. By the continuity of the functions involved, along with the condition $|H | = N$ and  Lemma~\ref{LEM:T4-dense} \ref{itm:t4-dense-more-t1=1}, we have  $3t_4\neq 2m+i.$
If $3t_4> 2m+i$ and $n/5\le t\le 3n/10$, then by Lemma~\ref{LEM:T4-dense} \ref{itm:t4-dense-more-t1>1}, we have 
\begin{align*}
     |H | &\le \binom{2(t+1)+1}{2}+(2(t+1)+1)(n-2(t+1)-1)-\frac1{100}n^2+O(n)\\
     &\le e_1(n,t)-\frac{7}{4}t^2+\frac{1}{2}t(3n-13)-\frac{1}{4}(n^2-12n+23)-\frac1{100}n^2+O(n)\\
     &=e_1(n,t)-\frac{7}{4}t^2+\frac{3}{2}nt-\frac{1}{4}n^2-\frac{1}{100}n^2+O(n)\\
     &\le e_1(n,t)-\frac{53}{4045}n^2+O(n)<N,
 \end{align*}
where the last inequality follows from $t\le\frac{1}{9}(2n-6)-2$. Note that while we have $t>(n-18)/5$,  Lemma~\ref{LEM:T4-dense} \ref{itm:t4-dense-more-t1>1} requires the stronger condition  $t\ge n/5$. Nevertheless, we can still apply the lemma by deleting at most four vertices from either $\mathcal M$ or $\mathcal I$. This modification affects only  $O(n)$ in calculating $|H |$, which is negligible for our purposes.

Thus, we have $3t_4< 2m+i$, which together with Lemma~\ref{LEM:T4-dense}~\ref{itm:t4-dense-few}  gives 
\begin{align}
     |H |\le h(t_1, t_{2}, t_{3}, t_{4}, m, i).\label{EQ:upper-bound-|H |-gl1-subcase2}
\end{align}
By \eqref{Eq:h1-4-1},\eqref{Eq:h1-4-2} and \eqref{Eq:h1-4-3},  we have that the function $ h(\tau_1,\tau_2,\tau_3,\tau_4,\mu,\iota)$ attains its maximum when either $\tau_{4}=0$ or $\tau_{4}=\frac{1}{3}(n-3(t+1)-1)$ provided   $\sum_{i=1}^4\tau_i=t+1$. 


\begin{claim}\label{CLAIM:maximum-t4=0-case-large-T4}
    The function $h$ attains its maximum when $\tau_{4}=0$.  
\end{claim}

\begin{proof}
By contradiction, it follows from \eqref{EQ:upper-bound-|H |-gl1-subcase2} that 
\begin{align*}
    |H |\le h(t_1, t_{2}, t_{3}, \frac13\left( n-3(t+1)-1\right), m, i). 
\end{align*}

If $h$ attains its maximum when $\tau_1>0$, then by \eqref{Eq:h1-2-1} and \eqref{Eq:h1-3-1}, we have  $\tau_1=2(t+1)-(n-1)/3$, $\tau_2=\tau_3=0$. Thus, 
\begin{align*}
    |H |&\le h\left(2(t+1)-\frac {n-1}3,0,0,\frac13\left( n-3(t+1)-1\right),m,i\right)\notag\\
    &= h(t+1,0,0,0,m,i)-\frac13\left( n-3(t+1)-1\right)\left(m+i-\frac13\left( n-3(t+1)-1\right)-4\right)\notag\\
    &\le h(t+1,0,0,0,m,i)-\frac13\left( n-3(t+1)-1\right)\left(\frac16(n-3(t+1))-\frac{11}3\right)\\
    &=\frac{1}{4} \left(-i^2+n^2+2 n (t+1)-t^2-4 t-3\right)-\frac{(n-3t)^2}{18}+O(n)\\
    &\le e_1(n,t)-\frac{(n-3t)^2}{18}+O(n)<N,
\end{align*}
a contradiction.  This implies that $h$ attains its maximum when $\tau_1=0$. Combining \eqref{Eq:h1-3-2}, \eqref{EQ:m=f(n,t,i)} and \eqref{EQ:upper-bound-|H |-gl1-subcase2}, we conclude that 
\begin{align*}
    |H |
    &\le h\left(0,2(t+1)-\frac {n-1}3,0,\frac13(n-3(t+1)-1),m,i\right)+3(t+1)\notag\\
    &= -\frac{1}{4} \left(i-\frac{1}{3} (9 t-2 n+11)\right)^2+\frac{1}{12} \left(5n^2+54t^2+142t-2n(8t+5)+80\right)\notag\\
    &\le\frac{1}{12} \left(5n^2+54t^2+142t-2n(8t+5)+80\right)\notag\\
    &\le e_1(n,t)+\alpha_5(t),
\end{align*}
where 
\begin{align*}
    \alpha_5(t)=\frac{19}{4}\left(t-\left(\frac{11}{57}n-\frac{74}{57}\right)\right)^2-\frac{1}{684}(7 n^2-1058n+745).
\end{align*}
Since $\frac{1}{5}(n-18)\le t\le \frac{1}{9}(2n-6)-2$, $\alpha_5(t)$ reaches its maximum when $t=\frac{1}{9}(2n-6)-2$. Thus, 
\begin{align*}
    |H|\le e_1(n,t)-\frac{1}{162} n^2+\frac{7}{6} n+\frac{281}{36}<N,
\end{align*}
a contradiction. This completes the proof of Claim \ref{CLAIM:maximum-t4=0-case-large-T4}. 
\end{proof}

By Claim \ref{CLAIM:maximum-t4=0-case-large-T4}, there exists $t'_{i}\ge t_{i}$ for $i\in [3]$ and $t'_{1}+t'_{2}+t'_{3}=t+1$ such that 
\begin{align*}
    |H |\le   h\left(t'_{1},t'_2,t'_{3},  0,  m,i\right),
\end{align*}
which together with \eqref{Eq:h1-3-2} and Fact~\ref{LEM:EQ-f1-max-t1-t2} yields
\begin{align}
    |H |&\le h\left(t'_{1},t'_2+t'_{3}, 0, 0, m,i\right)-(i-3)t'_3\label{EQ:upper-bound-|H |-gs1-t1'-t2'+t3'}\\
    &=f(t'_{1},t'_2+t'_{3}, 0, 0, m,i)+im+m^2-(i-3)t'_3\notag\\
    &\le\max\{f(t+1,0,0,0,m,i),f(0,t+1,0,0,m,i)\}+im+m^2-(i-3)t'_3\label{EQ:upper-bound-|H |-gl1-max-f-i}.
\end{align}

\textbf{Subcase 3.1.} $i\le 2$.

Now we claim that $t_1'\ge t-1$. By contradiction, $\overline{t}_1'=t_2'+t_3'\ge 3$. It follows from \eqref{EQ:upper-bound-|H |-gs1-t1'-t2'+t3'} that 
\begin{align*}
    |H |&\le  h(t_1',t_2'+t_3',0,0,m,i)+(3-i)(t_2'+t_3')\notag\\
    &=\frac14(-3-i^2 + 2 n + n^2-4 t + 2 n t-t^2)+\alpha_6(\overline{t}_1'),
\end{align*}
where $\alpha_6(\overline{t}_1')=\frac12\overline{t}_1'^2- \frac12 ( i+n -3t-12) \overline{t}_1'$ which reaches its maximum when $\overline{t}_1'=3$. Thus, 
\begin{align*}
    |H |&\le\frac{1}{4} \left(-i^2-6 i+n^2+2 n (t-2)-t^2+14 t+87\right)\\
    &\le\frac{1}{4} \left(n^2+2 n (t-2)-t^2+14 t+87\right)\\
    &\le e_1(n,t)-n+4 t+O(1)<N,
\end{align*}
a contradiction.

Recall that $3t_4<2m+i$. In the following, we have 
\begin{align*}
    t_2+t_3\le t_2'+t_3'\le2,
    \text{ and }t_2+t_3+t_4\le\frac{n-3(t+1)}3+2.
\end{align*} 
Note that by \eqref{Eq:h1-3-2} and \eqref{Eq:h1-4-2}, we have
\begin{align*}
    |H |&\le h(t_1,t_2+t_3,0,t_4,m,i)+(3-i)t_3\\&\le\max\{h(t_1,\overline{t}_1,0,0,m,i),h(t_1,0,0,\overline{t}_1,m,i)\}+(3-i)\overline{t}_1.
\end{align*}
If $h(t_1,\overline{t}_1,0,0,m,i)\ge h(t_1,0,0,\overline{t}_1,m,i)$, then  by an argument analogous to the proof of $t_1'\ge t-1$ above, we have $t_1\ge t-1$. Otherwise,  
\begin{align*}
    |H |&\le  h(t_1,0,0,\overline{t}_1,m,i)+(3-i)\overline{t}_1\notag\\
    &=\frac{1}{4} \left(-i^2+n^2+2 n t+2 n-t^2-4 t-3\right)+\alpha_{7}(\overline{t}_1),
\end{align*}
where $\alpha_{7}(\overline{t}_1)=\overline{t}_1^2-\frac{1}{2}\overline{t}_1 (3 i+n-3 t-17)$. In this case, if $\overline{t}_1\ge 3$, then $\alpha_{7}(\overline{t}_1)$  attains its maximum when $\overline{t}_1=3$; and so 
\begin{align*}
    |H |&\le\frac{1}{4} \left(-i^2-18 i+n^2+2 n (t-2)-t^2+14 t+135\right)\\
    &\le\frac{1}{4} \left(n^2+2 n (t-2)-t^2+14 t+135\right)\\
    &\le e_1(n,t)-n+4 t+O(1)<N,
\end{align*}
 a contradiction.


{\bf{Subcase 3.2}}: $i\ge3$.

It follows from $i\ge3$ that $(i-3)t_3'\ge0$. By \eqref{EQ:upper-bound-|H |-gl1-max-f-i} and Fact~\ref{FACT:upper-bound-f+im+m^2}~\ref{eq:f1+im+m2-t2=t+1}, we have  $f(t+1,0,0,0,m,i)>f(0,t+1,0,0,m,i)$, and then 
\begin{align*}
    |H |&\le f(t+1,0,0,0,m,i)+im+m^2\le e_1(n,t)+\frac14 (-i^2+2n-2 t-2).
\end{align*}
Using $|H |=N $, we have $\frac14(-i^2+2n-2 t-2)\ge2$ and then 
\begin{align*}
    i<\sqrt{2n} \text{ and then }\frac{n-3(t+1)-\sqrt{2n}}2<m\le\frac{n-3(t+1)}2. 
\end{align*}

\begin{claim}\label{CLAIM:t2+t3-le-2-case-large-T4-i3}
   $t_2+t_3\le 2$. 
\end{claim}

\begin{proof} 
By contradiction, we have $\overline{t}_1'=t_2'+t_3'\ge t_2+t_3\ge3$. By  \eqref{EQ:upper-bound-|H |-gs1-t1'-t2'+t3'},
\begin{align*}
    |H |\le  h(t_{1}',\overline{t}_1', 0, 0, m,i)=\frac14(-i^2+n^2+2 n t+2 n-t^2-4 t-3) + \alpha_8(\overline{t}_1'),
\end{align*}
where $\alpha_8(\overline{t}_1')=\frac{1}{2}\overline{t}_1'^2-\frac{1}{2} \overline{t}_1'(n-i-3 t-6)$ which reaches its  maximum when $\overline{t}_1'=3$. Thus, 
\begin{align*}
    |H |&\le\frac{1}{4} \left(-(i-3)^2+n^2+2 n (t-2)-t^2+14 t+60\right)\\
    &\le\frac{1}{4} \left(n^2+2 n (t-2)-t^2+14 t+60\right)\\
    &\le e_1(n,t)- n + 4t+O(1)<N,
\end{align*}
a contradiction. 
\end{proof}

Recall that $3t_4<2m+i=n-3(t+1)$. In the following, we have 
\begin{align}
    t_2+t_3\le t_2'+t_3'\le2\text{ and }t_2+t_3+t_4\le\frac{n-3(t+1)}{3}+2.\label{EQ:tau_4=0-t_1>0-t2+t3+t4}
\end{align}
It suffices to show $t_1\ge t-1$. Suppose to the contrary, $\overline{t}_1\ge3$.  Combining \eqref{Eq:h1-4-2} and Fact~\ref{LEM:EQ-f1-max-t1-t2}, we have 
\begin{align*}
    |H |
    \le h(t_1,t_2,0,t_4+t_3,m,i)
    \le \max\{h(t_1,\overline{t}_1,0,0,m,i),h(t_1,0,0,\overline{t}_1,m,i)\}.
\end{align*}
If $h(t_1,\overline{t}_1,0,0,m,i)\ge h(t_1,0,0,\overline{t}_1,m,i)$, then we can get a contradiction by arguments analogous to the proof of Claim \ref{CLAIM:t2+t3-le-2-case-large-T4-i3}. Otherwise,  
\begin{align*}
     |H |\le h(t_1,0,0,\overline{t}_1,m,i)
     =\frac14(-i^2+n^2+2 n t+2 n-t^2-4 t-3) + \alpha_{9}(\overline{t}_1),
\end{align*}
where $\alpha_{9}(\overline{t}_1)= \overline{t}_1^2-(i+n-3 t-11) \overline{t}_1/2$.  By \eqref{EQ:tau_4=0-t_1>0-t2+t3+t4}, we have $3\le \overline{t}_1\le n/3-(t+1)+2$ and so $\alpha_{9}(\overline{t}_1)$ attains its maximum when $\overline{t}_1=3$. Thus, 
\begin{align*}
     |H |&\le\frac{1}{4} \left(-i^2-6 i+n^2+2 n (t-2)-t^2+14 t+99\right)\\
     &\le  \frac{1}{4} \left(n^2+2 n (t-2)-t^2+14 t+99\right)\\
     &\le e_1(n,t)-n+4t+O(1)<N,
\end{align*}
 a contradiction.  This completes the proof of Theorem \ref{THM:STABILITY}. 
\end{proof}


\section{Large rainbow complete tripartite  subgraph}\label{SEC:H-Structure}
 
Our main purpose in this section is to prove the following result. 

\begin{theorem}\label{THM:tripartite-graph}
    For every $\delta > 0$, there exists $N_{\ref{THM:tripartite-graph}} = N_{\ref{THM:tripartite-graph}}(\delta)$ such that the following holds for every $n \ge N_{\ref{THM:tripartite-graph}}$. 
    Let $t \in \left[\delta n,~\frac{2n-6}{9} + 1 \right]$ be an integer and $\chi \colon K_n \rightarrow [N]$ be a surjective edge coloring that admits no rainbow copy of $(t+2)K_3$. 
    Let $H \subseteq K_{n}$ be a representative graph of $\chi$. 
    Suppose that $(\mathcal{T}, \mathcal{M}, \mathcal{I})$ is a maximal tiling triple of $H$ and $(\mathcal{T}_1, \mathcal{T}_2, \mathcal{T}_3, \mathcal{T}_4)$ is an ideal partition  of $\mathcal{T}$ that satisfy~\ref{THM:STABILITY-a},~\ref{THM:STABILITY-b}, and~\ref{THM:STABILITY-c} in Theorem~\ref{THM:STABILITY}. Then 
    \begin{align*}
        K_{t_1-10,~m+t_1-10,~m+t_1-10}
        \subseteq H\left[V(\mathcal{T}_1) \cup V(\mathcal{M})\right]. 
    \end{align*}    
\end{theorem}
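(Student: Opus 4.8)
Since $H$ is rainbow under $\chi$ and $\chi$ admits no rainbow $(t+2)K_3$, the graph $H$ is $(t+2)K_3$-free with $|H|=N=e_1(n,t)+2$. I would first record the consequences of Theorem~\ref{THM:STABILITY}: by~\ref{THM:STABILITY-a} we have $t_1\in[t-1,t+1]$, hence $t_2+t_3+t_4=(t+1)-t_1\le 2$, so all parts other than $\mathcal T_1$, $\mathcal M$, $\mathcal I$ are negligible; by~\ref{THM:STABILITY-b},~\ref{THM:STABILITY-c} we have $m=\tfrac{n-3(t+1)}2-O(\sqrt n)$ and $i<\sqrt{2n}$. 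Recall the partition $V(\mathcal T_1)=V''\sqcup V'$ into critical vertices ($|V''|=t_1$) and non-critical vertices ($|V'|=2t_1$), and set $W\coloneqq V'\cup V(\mathcal M)$, so that $V(\mathcal T_1)\cup V(\mathcal M)=V''\sqcup W$ and $|W|=2(t_1+m)$. The target then reduces to producing a balanced complete bipartite graph $K_{m+t_1-10,\,m+t_1-10}\subseteq H[W]$ (using all but $10$ vertices of each side of $W$) both of whose sides are completely joined to a set of $t_1-10$ critical vertices.

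\textbf{The bipartite part is near-extremal and essentially triangle-free.} The plan is to show $H[W]$ is close to a balanced complete bipartite graph. For the edge count, I would bound the edges incident to $V''$ trivially by $\binom{t_1}{2}+t_1(n-t_1)$ and control the $\mathcal I$--$W$ and $\mathcal T_{\ge 2}$ contributions via the structural Lemmas~\ref{LEM:upper-bound-each-part-1}--\ref{LEM:upper-bound-each-part-3}; combined with $|H|=N$ and $t_1=t+O(1)$ this forces $e(H[W])\ge (t_1+m)^2-O(n)$, i.e. $H[W]$ is a near-extremal dense graph on $2(t_1+m)$ vertices. For triangle-freeness, the key observation is that the near-complete-bipartite structure together with the dominating critical vertices lets one pack $t_1$ vertex-disjoint ``apex triangles'' (critical vertex plus one vertex from each near-side), and these can be chosen to avoid any prescribed constantly-many vertices because $|W|\gg t_1$. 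Hence any triangle inside $H[W]$ whose vertices are not of very low degree could be combined with a fresh apex packing to yield $t_1+1$, and in fact $t+2$, disjoint triangles, contradicting $(t+2)K_3$-freeness. This shows $H[W]$ has only $O(1)$ (indeed at most $(t+1)-t_1\le 2$) disjoint triangles among high-degree vertices, so after deleting $O(1)$ vertices it is triangle-free; being still near-extremal, Mantel-type stability makes it essentially complete bipartite.

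\textbf{Exact bipartition, domination, and cleanup.} After trimming the few low-degree vertices, the remaining graph $H[W']$ is triangle-free with minimum degree exceeding $\tfrac25|W'|$, so Theorem~\ref{THM:AES-Thm} yields a genuine bipartition $W'=Y_1\sqcup Y_2$; near-extremality of the edge count forces both $|Y_1|,|Y_2|=|W|/2-O(1)$ and the bipartite graph between them to be complete. I would then show each critical vertex is adjacent to all but $O(1)$ vertices of $Y_1\cup Y_2$ (a vertex missing many neighbors on a side would drop $|H|$ below $N$, or equivalently would contradict the tightness forced by $|H|=N$). Finally, discarding the bounded set of exceptional vertices — the trimmed low-degree vertices and the critical vertices with too many non-neighbors — and absorbing these losses into the ``$-10$'' slack per part, I obtain $K_{t_1-10,\,m+t_1-10,\,m+t_1-10}\subseteq H[V(\mathcal T_1)\cup V(\mathcal M)]$, as required.

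\textbf{Main obstacle.} The delicate point is not the qualitative picture but the quantitative bookkeeping: passing from the \emph{global} edge-count near-extremality to a \emph{local, exact} structure with only a \emph{constant} number of exceptional vertices. Concretely, I expect the hardest steps to be (a) bounding the number of misplaced or low-degree vertices of $H[W]$ by $O(1)$ rather than merely $o(n)$, so that the final deficiency is the stated constant $10$ per part, and (b) guaranteeing throughout that the $t_1$ apex triangles can avoid any fixed constantly-many vertices, since this uniform ``room to reroute'' is what underlies both the triangle-freeness argument of the middle paragraph and the domination statement in the last paragraph. Both hinge on using the exact value $|H|=N$ to convert each structural inequality from Lemmas~\ref{LEM:upper-bound-each-part-1}--\ref{LEM:upper-bound-each-part-3} into near-equality with only $O(1)$ total slack.
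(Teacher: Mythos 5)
There is a genuine gap, and it is exactly the point you flag as your ``main obstacle'': your argument never uses the coloring $\chi$ beyond observing that $H$ is $(t+2)K_3$-free, and without the coloring there is no mechanism to reduce the slack from $O(n)$ to $O(1)$. Quantitatively, the global upper bound $h(t+1,0,0,0,m,i)$ exceeds $e_1(n,t)$ by roughly $m+t_1=\Theta(n)$ (Fact~\ref{fact1}), so pure edge counting against $|H|=N$ only yields $e(H[V'\cup V(\mathcal M)])\ge (t_1+m)^2-O(n)$, as you concede. But a balanced complete bipartite graph minus $O(n)$ edges need not contain $K_{s,s}$ with $s$ equal to the side length minus a constant (delete a perfect matching: one must then discard a linear number of vertices), so ``near-extremal $+$ Mantel stability'' cannot deliver the stated $K_{t_1-10,\,m+t_1-10,\,m+t_1-10}$. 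Your plan to recover constant deficiency by ``converting each structural inequality into near-equality with only $O(1)$ total slack'' is precisely what is missing, and it does not follow from $(t+2)K_3$-freeness and $|H|=N$ alone.

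The paper closes this gap with a coloring argument you do not have. It first finds $u_1v_1,u_2v_2\in\mathcal M$ spanning a $C_4$ (Claim~\ref{CLM:two-edge-contian-C4}), observes that at least one of $v_1v_2u_1,\,v_1v_2u_2$ is rainbow, and deduces that $\chi(v_1v_2)\in\chi(xyz)$ for some $xyz\in\mathcal T$ --- otherwise $v_1v_2u_1$ together with $\mathcal T$ would be a rainbow $(t+2)K_3$. This identifies a \emph{sacrificial} triangle $xyz$ that can be traded for the rainbow triangle $v_1v_2u_1$ in any rainbow-$(t+2)K_3$ construction; consequently only $u_1v_1,u_2v_2$ may see $xyz$, and no edge $y'z'$ of another $\mathcal T_1$-triangle may see $xyz$. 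These two facts save roughly $(m-2)+(t_1-2)$ in the upper bound for $|H|$, which exactly cancels the $\Theta(n)$ excess of $h$ over $e_1(n,t)$ and forces $e(\mathcal T_1\setminus\{xyz\})+e(\mathcal T_1\setminus\{xyz\},\mathcal M)+e(\mathcal M)$ to be within $9$ of its maximum (Claim~\ref{CLAIM:number-T-case-T1}); the tripartite graph is then complete minus at most $9$ edges, whence the ``$-10$''. Your triangle-freeness sketch for $H[W]$ is also not quite the paper's: Claim~\ref{CLAIM:K3-free-V'-M} proceeds by cases on $|\{\omega_1,\omega_2,\omega_3\}\cap V(\mathcal M)|$ and in two of the three cases cannot directly exhibit $t+2$ disjoint triangles, instead falling back on refined edge counts; your ``apex packing'' argument also presupposes the near-bipartite structure it is meant to establish. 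To repair your proposal you would need to import the $C_4$/color-of-$v_1v_2$ step (or an equivalent use of $\chi$); as written, the approach cannot reach the conclusion.
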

Under the condition~\ref{THM:STABILITY-a},~\ref{THM:STABILITY-b}, and~\ref{THM:STABILITY-c} in Theorem~\ref{THM:STABILITY}, the following facts is easy to check. 

\begin{fact}\label{FACT:function-q}
Let 
\begin{align*}
    q_1(t_1,t_2,t_3,t_4,m,i)\coloneqq h(t_1,t_2,t_3,t_4,m,i)-m-t_1.
\end{align*}
Then $q_1(t_1,t_2,t_3,t_4,m,i)\le e_1(n,t)+1/4$.
\end{fact}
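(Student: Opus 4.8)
The plan is to exploit the rigidity coming from \ref{THM:STABILITY-a}. Since $\sum_{j=1}^{4}t_j=|\mathcal T|=t+1$ and $t_1\ge t-1$, we have
\begin{align*}
    \bar t_1\coloneqq t_2+t_3+t_4=t+1-t_1\le 2 .
\end{align*}
Thus the configuration $(t_1,t_2,t_3,t_4)$ differs from the corner configuration $(t+1,0,0,0)$ by at most two units, and the whole argument amounts to comparing $h$ at these two points. Throughout I use the identities \eqref{Eq:h1-2-1}, \eqref{Eq:h1-3-1}, \eqref{Eq:h1-4-1} together with the relation $m=\tfrac12\!\left(n-3(t+1)-i\right)$ coming from $n=3(t+1)+2m+i$, and the bound from \ref{THM:STABILITY-b} that $m$ is of order $n/2$.

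First I would transport the mass of slots $2,3,4$ back into slot $1$, one slot at a time, passing from $(t+1,0,0,0)$ through $(t+1-t_2,t_2,0,0)$ and $(t+1-t_2-t_3,t_2,t_3,0)$ to $(t_1,t_2,t_3,t_4)$. Applying \eqref{Eq:h1-2-1} directly and the reverse forms of \eqref{Eq:h1-3-1} and \eqref{Eq:h1-4-1} at the correct intermediate configurations, and telescoping, yields the exact identity
\begin{align*}
    h(t_1,t_2,t_3,t_4,m,i)=h(t+1,0,0,0,m,i)-\Delta,
\end{align*}
where
\begin{align*}
    \Delta=\tfrac12 t_2(2m-3-t_2)+\tfrac12 t_3(2m+2i-9-t_3-2t_2)+t_4(m+i-4-t_4-t_2-t_3).
\end{align*}
Because $t_2,t_3,t_4\le\bar t_1\le 2$ and $i\ge 0$, each summand is nonnegative and at least its leading coefficient times $(m-10)$; summing gives the clean lower bound $\Delta\ge(t_2+t_3+t_4)(m-10)=\bar t_1(m-10)$ for all large $n$.

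Next I would evaluate the corner. By Fact~\ref{FACT:upper-bound-f+im+m^2}\ref{eq:f1+im+m2-t1=t+1} and the substitution $m=\tfrac12(n-3(t+1)-i)$, a direct computation in which every term linear or quadratic in $n$ and $t$ cancels leaves only a quadratic in $i$:
\begin{align*}
    h(t+1,0,0,0,m,i)-m-(t+1)\le e_1(n,t)-\tfrac14 i^2+\tfrac12 i=e_1(n,t)-\tfrac14(i-1)^2+\tfrac14\le e_1(n,t)+\tfrac14 .
\end{align*}
Combining the two displays and using $t_1=t+1-\bar t_1$, so that $-m-t_1=-m-(t+1)+\bar t_1$, I obtain
\begin{align*}
    q_1(t_1,t_2,t_3,t_4,m,i)
    &=h(t_1,t_2,t_3,t_4,m,i)-m-t_1\\
    &=\bigl(h(t+1,0,0,0,m,i)-m-(t+1)\bigr)+\bar t_1-\Delta\\
    &\le e_1(n,t)+\tfrac14+\bar t_1\bigl(11-m\bigr)\le e_1(n,t)+\tfrac14,
\end{align*}
since $\bar t_1\ge 0$ and $m\ge 11$ for large $n$. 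This is exactly the asserted inequality.

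The one genuinely delicate point is the bookkeeping in the telescoping identity: the three transport steps must be read at the correct intermediate configurations, and the reverse forms of \eqref{Eq:h1-3-1} and \eqref{Eq:h1-4-1} must be produced with the right signs, since a single sign slip would flip $\Delta$ and break the estimate. Everything else is mechanical, and conceptually the proof rests on a single observation: moving even one unit out of slot $1$ costs $\Theta(m)=\Theta(n)$ in the value of $h$, which dwarfs the $+\bar t_1$ gained in the $-t_1$ term. Hence $q_1$ is maximized at $\bar t_1=0$, and there completing the square in $i$ produces precisely the additive constant $\tfrac14$.
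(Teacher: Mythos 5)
Your proof is correct and follows essentially the same route as the paper: the paper simply asserts that $q_1(t_1,t_2,t_3,t_4,m,i)\le q_1(t+1,0,0,0,m,i)$ is "easy to check" and then evaluates the corner by completing the square in $i$, which is exactly your telescoping reduction (your $\Delta$ and the $+\,\overline{t}_1$ correction from the $-t_1$ term, dominated by $\overline{t}_1(m-10)$ under the stability conditions) followed by the same corner computation yielding $e_1(n,t)-\tfrac14(i-1)^2+\tfrac14$.
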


\begin{proof}
It is easy to check that $q_1(t_1,t_2,t_3,t_4,m,i)\le q_1(t+1,0,0,0,m,i)$. Therefore,
\begin{align*}
    q_1(t_1,t_2,t_3,t_4,m,i)
    &\le \frac{1}{4} \left(-(i - 1)^2+n^2+2 n t-t (t+2)\right)\notag\\
    &\le\frac{1}{4} \left(n^2+2 n t-t (t+2)\right)
    \le e_1(n,t)+\frac{1}{4},
\end{align*}
completing the proof. 
\end{proof}

\begin{fact}\label{fact1}
    We have $|H |\le  e_1(n,t)+\frac{1}{2} ( n- t-1)$.  Moreover, if  $\overline{t}_1>0$, then 
    \begin{align*}
        h(t_1,t_2,t_3,t_4,m,i)\le e_1(n,t)+t+6. 
    \end{align*}
\end{fact}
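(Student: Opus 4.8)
The plan is to derive both bounds from the single estimate $|H|\le h(t_1,t_2,t_3,t_4,m,i)$ of Lemma~\ref{LEMMA:upper-bound-H-h1-ABHP}, reducing everything to a calculation with the function $h$ under the stability constraints. Throughout I use $\sum_{j=1}^4 t_j=t+1$ together with $2m+i=n-3(t+1)$ to substitute $m=\tfrac{n-3(t+1)-i}{2}$; condition~\ref{THM:STABILITY-a} forces $\overline{t}_1=(t+1)-t_1\in\{0,1,2\}$ and condition~\ref{THM:STABILITY-c} gives $i<\sqrt{2n}$. The first step is to apply the second inequality of Fact~\ref{LEM:EQ-f1-max-t1-t2} to move all of $\mathcal T_3$ into $\mathcal T_4$, so that $h(t_1,t_2,t_3,t_4,m,i)\le h(t_1,t_2,0,t_3+t_4,m,i)$. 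Since the increment \eqref{Eq:h1-4-2} governing the transfer of mass between the second and fourth coordinates is an upward-opening quadratic in the shift, $h$ along the segment joining the two extreme configurations is maximized at an endpoint, whence
\begin{align*}
    h(t_1,t_2,t_3,t_4,m,i)\le \max\left\{\,h(t_1,\overline{t}_1,0,0,m,i),\ h(t_1,0,0,\overline{t}_1,m,i)\,\right\}.
\end{align*}

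For the moreover part I assume $\overline{t}_1>0$, hence $\overline{t}_1\in\{1,2\}$. I reuse the exact identities already derived in the proof of Theorem~\ref{THM:STABILITY}: after substituting $m$,
\begin{align*}
    h(t_1,\overline{t}_1,0,0,m,i)=\tfrac14\bigl(-i^2+n^2+2nt+2n-t^2-4t-3\bigr)+\alpha_1(\overline{t}_1),
\end{align*}
with $\alpha_1(\overline{t}_1)=\tfrac12\overline{t}_1^2-\tfrac12\overline{t}_1(n-i-3t-6)$, and the companion identity for $h(t_1,0,0,\overline{t}_1,m,i)$ carrying $\alpha_2(\overline{t}_1)=\overline{t}_1^2-\tfrac12\overline{t}_1(n+i-3t-11)$. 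Writing the common leading term as $e_1(n,t)+\epsilon+\tfrac14(-i^2+2n-2t-3)$ with $\epsilon\in\{0,\tfrac14\}$, I note that $\alpha_1$ and $\alpha_2$ are upward parabolas whose vertices are of order $\Theta(n)$ (since $n-3t\ge\tfrac{n-3}{3}$ and $i<\sqrt{2n}$), so on $\{1,2\}$ they are decreasing and largest at $\overline{t}_1=1$. Setting $\overline{t}_1=1$ and discarding the nonpositive remainders $-\tfrac14(i-1)^2$ and $-\tfrac14 i^2-\tfrac12 i$ collapses the two estimates to $e_1(n,t)+t+\tfrac{13}{4}$ and $e_1(n,t)+t+6$ respectively; the fourth-coordinate boundary is the binding one, giving $h\le e_1(n,t)+t+6$.

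The first inequality then follows by splitting on $\overline{t}_1$. If $\overline{t}_1=0$, Fact~\ref{FACT:upper-bound-f+im+m^2}~\ref{eq:f1+im+m2-t1=t+1} gives $h(t+1,0,0,0,m,i)\le e_1(n,t)+\tfrac14(-i^2+2n-2t-2)\le e_1(n,t)+\tfrac12(n-t-1)$ after dropping $-i^2\le 0$. If $\overline{t}_1\ge 1$, the moreover part gives $h\le e_1(n,t)+t+6$, and since $t\le\tfrac{2n-6}{9}+1$ one checks $t+6\le\tfrac12(n-t-1)$ for $n$ large. Combining with $|H|\le h$ yields $|H|\le e_1(n,t)+\tfrac12(n-t-1)$ in both cases.

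The step I expect to be the main obstacle is the moreover part. It is more than a substitution: one must first certify that, under the stability constraints, a maximizer of $h$ lies at one of the two boundary configurations—this is precisely where the convexity of \eqref{Eq:h1-4-2} and the bound $\overline{t}_1\le 2$ from condition~\ref{THM:STABILITY-a} are indispensable, since without $\overline{t}_1\le 2$ the parabolas $\alpha_1,\alpha_2$ need not be monotone on the relevant range. One then has to keep the constants sharp so that the worse endpoint lands on exactly $+6$. Granting this, the first inequality is a short corollary.
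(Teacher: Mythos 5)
Your proposal is correct and follows essentially the same route as the paper: both bound $|H|$ by $h$ via Lemma~\ref{LEMMA:upper-bound-H-h1-ABHP}, push $h$ to the boundary configurations $(t,1,0,0,m,i)$ and $(t,0,0,1,m,i)$ using the increment identities, and finish with the same arithmetic yielding $t+\tfrac{13}{4}$ and $t+6$. The only (immaterial) difference is organizational: the paper funnels mass into $t_1$ while retaining one unit in $t_2$ or $t_4$ via \eqref{EQ:h1-half-2-1} and \eqref{EQ:h1-half-1-4}, whereas you reach the same endpoints through Fact~\ref{LEM:EQ-f1-max-t1-t2}, the convexity of \eqref{Eq:h1-4-2}, and monotonicity of $\alpha_1,\alpha_2$ on $\{1,2\}$ under condition~\ref{THM:STABILITY-a}.
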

\begin{proof}
By \eqref{Eq:h1-2-1}, \eqref{Eq:h1-4-1} and \eqref{Eq:h1-4-3}, we have $h(t_1,t_2,t_3,t_4,m,i)\le h(t+1,0,0,0,m,i)$ under the condition~\ref{THM:STABILITY-a},~\ref{THM:STABILITY-b}, and~\ref{THM:STABILITY-c} in Theorem~\ref{THM:STABILITY}. Combining this with Fact~\ref{FACT:upper-bound-f+im+m^2}~\ref{eq:f1+im+m2-t1=t+1}, we conclude that $|H|\le e_1(n,t)+\frac{1}{2} ( n- t-1)$. 
  
For the moreover part, if $t_2 > 0$ and $t_4 > 0$, then by \eqref{Eq:h1-4-1}and \eqref{Eq:h1-4-3}, we have
\begin{align}
    h(t_1+t_2-1,1,t_3,t_4,m,i)-h&=\frac{1}{2} (t_2-1) (2 m-(t_2-1)+2t_3+2t_4+3)\ge0;\label{EQ:h1-half-2-1}\\
    h(t_1+t_4-1,t_2,t_3,1,m,i)-h&=(t_4 -1)(i+m-t_2-t_3-(t_4-1)-4)\ge0.\label{EQ:h1-half-1-4}
\end{align}
Thus,
\begin{align*}
    &h(t_1,t_2,t_3,t_4,m,i)\le\max\{ h(t,1,0,0,m,i),h(t,0,0,1,m,i)\}\\
    &=\max\left\{\frac{1}{4} \left(-(i-1)^2+n^2+2 n t-t^2+2 t+12\right),\frac{1}{4} \left(-i^2-2 i+n^2+2 n t-t^2+2 t+23\right)\right\}\\
    &\le\max\left\{\frac{1}{4} \left(n^2+2 n t-t^2+2 t+12\right),\frac{1}{4} \left(n^2+2 n t-t^2+2 t+23\right)\right\}\\
    &\le e_1(n,t)+\max\left\{t+\frac{13}{4},t+6\right\}=e_1(n,t)+t+6.
\end{align*}
Here we complete the proof.
\end{proof}

\begin{proof}[Proof of Theorem \ref{THM:tripartite-graph}]

Suppose that $H$ is the representative graph of a surjective edge-coloring $\chi: K_n \rightarrow [N]$ that admits no rainbow copy of $(t+2)K_3$. Let $(\mathcal{T}, \mathcal{M}, \mathcal{I})$  be a maximal tiling triple of $H$ with an ideal partition $(\mathcal{T}_1, \mathcal{T}_2, \mathcal{T}_3, \mathcal{T}_4)$ of $\mathcal{T}$ which satisfying~\ref{THM:STABILITY-a},~\ref{THM:STABILITY-b}, and~\ref{THM:STABILITY-c} in Theorem~\ref{THM:STABILITY}. It follows from~\ref{THM:STABILITY-b} in Theorem~\ref{THM:STABILITY} and $ t < \frac{2n-6}{9} + 1$ that 
    \begin{align}\label{EQ:lower-bound-m-n/6-tripartite}
        m\ge \frac{n}{6}-o(n). 
    \end{align}
Before proceeding, we state some necessary properties of $H$. Note that by Lemma \ref{LEMMA:upper-bound-H-h1-ABHP}, 
    \begin{align*}
        |H |\le h(t_1,t_2,t_3,t_4,m,i). 
    \end{align*}

\begin{claim}\label{CLM:two-edge-contian-C4}
    There exist $u_1v_1,u_2v_2\in\mathcal M$ such that $H[\{u_1,v_1,u_2,v_2\}]$ contains a cycle of length 4. 
\end{claim}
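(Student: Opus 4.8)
The plan is to argue by contradiction: I would assume that no two edges of $\mathcal M$ span a $4$-cycle and show that this forces $H[V(\mathcal M)]$ to be so sparse that $|H|$ drops well below $N=e_1(n,t)+2$. The entire argument lives inside $V(\mathcal M)$, and the leverage comes from comparing the number of edges we can retain there against the value $m^2$ that is hard-wired into the bound $h$ of Lemma~\ref{LEMMA:upper-bound-H-h1-ABHP} (the term $\mu^2$).

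First I would record two structural facts about $G\coloneqq H[V(\mathcal M)]$, writing $\mathcal M=\{a_1b_1,\dots,a_mb_m\}$. By the lexicographic maximality of $(\mathcal T,\mathcal M,\mathcal I)$, the graph $G$ is triangle-free: a triangle inside $V(\mathcal M)$ meets at most three matching edges, and replacing those edges by the triangle produces a tiling triple with strictly more triangles, contradicting maximality. Next, fix two matching edges $ab$ and $cd$ and examine the four possible cross edges $ac,ad,bc,bd$. If two of them share an endpoint (say $ac,ad$, or $ac,bc$), then together with $cd$ or with $ab$ they close a triangle, contradicting triangle-freeness; if two of them are disjoint (say $\{ac,bd\}$ or $\{ad,bc\}$), then together with $ab,cd$ they form a $4$-cycle on $\{a,b,c,d\}$, contradicting our assumption. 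Hence every pair of matching edges carries at most one cross edge, so
\[
  e_H(V(\mathcal M))\;\le\;m+\binom{m}{2}\;=\;\frac{m(m+1)}{2},
\]
which is only about half of the generic bound $m^2$.

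Finally I would feed this improvement back into the edge count. The term $\mu^2=m^2$ in $h$ is exactly the bound used for $e_H(V(\mathcal M))$, while the remaining terms of $h$ bound edges disjoint from the interior of $V(\mathcal M)$; replacing $m^2$ by the sharper estimate above therefore gives $|H|\le h(t_1,t_2,t_3,t_4,m,i)-\binom{m}{2}$. Combining this with $h\le e_1(n,t)+\tfrac{n-t-1}{2}$ (the first part of Fact~\ref{fact1}) yields $|H|\le e_1(n,t)+\tfrac{n-t-1}{2}-\binom{m}{2}$. Since $m\ge n/6-o(n)$ by~\eqref{EQ:lower-bound-m-n/6-tripartite}, we have $\binom{m}{2}\ge \tfrac{n^2}{72}-o(n^2)$, which dwarfs the $O(n)$ slack, so $|H|<e_1(n,t)+2=N$, contradicting $|H|=N$.

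The main obstacle I anticipate is the bookkeeping in the last step: one must verify that the improvement on $e_H(V(\mathcal M))$ can be subtracted cleanly from $h$, i.e.\ that the $m^2$ appearing in $h$ genuinely isolates the interior edges of $V(\mathcal M)$ and that the other terms of $h$ continue to bound the rest of $H$ unchanged. This amounts to re-reading the derivation behind Lemma~\ref{LEMMA:upper-bound-H-h1-ABHP}, but it is the only delicate point; the combinatorial heart of the argument --- passing from at most two to at most one cross edge per pair of matching edges by using triangle-freeness and the no-$C_4$ assumption simultaneously --- is short, and the quantitative conclusion is comfortable because the loss $\binom{m}{2}=\Theta(n^2)$ overwhelms the $O(n)$ room left in the bound on $h$.
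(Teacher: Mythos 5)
Your proposal is correct and follows essentially the same route as the paper: assume no two $\mathcal M$-edges span a $C_4$, use triangle-freeness of $H[V(\mathcal M)]$ to conclude at most one cross edge per pair of matching edges, hence $e(\mathcal M)\le\binom{m}{2}+m$, and substitute this for the $m^2$ term in $h$ to lose $\binom{m}{2}=\Theta(n^2)$, contradicting $|H|=N$. The only difference is that you spell out the case analysis on the four cross edges, which the paper leaves implicit.
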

\begin{proof}
Otherwise, there are at most one edges between pair of edges in $\mathcal M$ which implies that $e(\mathcal{M})\leq\binom{m}{2}+m$. We update the bound involving  $e(\mathcal{M})$ in $h$ to $e(\mathcal{M})\leq\binom{m}{2}+m$, replacing the earlier estimate $e(\mathcal{M})\leq m^2$ given by Lemma \ref{LEM:upper-bound-each-part-1}, which together with \eqref{EQ:lower-bound-m-n/6-tripartite} and Fact \ref{fact1} yields that 
\begin{align*}
    |H |&\le f+i m+m^2+(3+3m)t_4+(2+i)t_4+\binom{3t_4}{2}-\left(m^2-\left(\binom{m}{2}+m\right)\right)\\
    &\le e_1(n,t)-\frac{1}{2}m^2+O(n)<N,
\end{align*}
a contradiction. 
\end{proof}

Recall that each triangle $xyz\in \mathcal T_1$ are seen by at least two edges in $\mathcal M$.
\begin{claim}\label{CLM:matching-sees-same-vertex}
For $ xyz\in\mathcal{T}_1$, if an $\mathcal{M}$-edge $uv$ sees the vertex $x$ of $ xyz$, then for any of the remaining $\mathcal{M}$-edge that sees $xyz$, these $\mathcal M$-edges are restricted to seeing only $x$ of $xyz$.
\end{claim}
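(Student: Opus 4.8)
The plan is to argue by contradiction through a triangle-augmentation argument, using that $H$ is $(t+2)K_3$-free with a maximal $K_3$-tiling $\mathcal{T}$ of size $t+1$. First I would record why $H$ is $(t+2)K_3$-free: since $H$ is rainbow under $\chi$ and $\chi$ admits no rainbow $(t+2)K_3$, any copy of $(t+2)K_3$ inside $H$ would itself be rainbow, which is impossible; and by the maximality of the tiling triple one has $|\mathcal{T}| = t+1$. Thus producing $t+2$ pairwise vertex-disjoint triangles in $H$ will give the desired contradiction.

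Fix $xyz \in \mathcal{T}_1$ together with the $\mathcal{M}$-edge $uv$ that sees $x$, so $ux, vx \in H$ and hence $uvx$ is a triangle. Let $u'v' \in \mathcal{M}$ be any edge distinct from $uv$ that sees $xyz$, and suppose for contradiction that it sees some $w \in \{y,z\}$ with $w \ne x$; then $u'w, v'w \in H$, so $u'v'w$ is also a triangle. The key point is that $uvx$ and $u'v'w$ are vertex-disjoint: the four vertices $u, v, u', v'$ lie in $V(\mathcal{M})$ and $\mathcal{M}$ is a matching, so $\{u,v\} \cap \{u',v'\} = \emptyset$, while $x, w \in V(\mathcal{T})$ are distinct and lie outside $V(\mathcal{M})$.

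Consequently, replacing $xyz$ in $\mathcal{T}$ by the two triangles $uvx$ and $u'v'w$ (and leaving the third vertex of $xyz$ uncovered) yields a $K_3$-tiling of size $(t+1)-1+2 = t+2$: the triangles of $\mathcal{T}\setminus\{xyz\}$ are untouched and disjoint from $V(\mathcal{M})$ by the partition $V(H) = V(\mathcal{T})\cup V(\mathcal{M})\cup V(\mathcal{I})$, and the two new triangles are disjoint from them and from each other. This is a copy of $(t+2)K_3$ in $H$, a contradiction. Hence no edge $u'v' \ne uv$ of $\mathcal{M}$ can see $y$ or $z$; since such an edge sees $xyz$, it must see $x$ and only $x$, which is exactly the assertion. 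I expect no genuine obstacle here: the only thing to check carefully is the vertex-disjointness of the augmenting triangles and that the remainder of $\mathcal{T}$ stays a valid tiling, both of which are immediate from the tiling-triple partition, and the argument is a direct instance of the augmentation principle built into the notion of a maximal tiling triple.
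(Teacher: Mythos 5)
Your proposal is correct and follows exactly the paper's argument: assume a second $\mathcal{M}$-edge $u'v'$ sees $y$ (or $z$), and replace $xyz$ in $\mathcal{T}$ by the two disjoint triangles $uvx$ and $u'v'y$ to obtain a $(t+2)K_3$ in $H$, contradicting that $H$ (being rainbow) admits no such copy. The extra details you supply — disjointness via the matching property and the tiling-triple partition — are exactly the implicit justifications in the paper's one-line proof.
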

\begin{proof}
By contradiction, if there is a matching edge $u'v'\in \mathcal M\setminus\{uv\}$ that sees  $y$ of  $ xyz$. Then $\mathcal{T}\setminus\{ xyz\}\cup\{ uvx, u'v'y\}$ is a copy of $(t + 2)K_3 $ in $H$, a contradiction. 
\end{proof}

By Claim \ref{CLM:matching-sees-same-vertex},  for  $ xyz\in\mathcal T_1$ and $uv\in\mathcal M$, then 
\begin{align}\label{EQ:bound-edge-triangle-edge-T1-M}
    e(uv, xyz)\le4,
\end{align}
and the equality holds only if $uv$ sees $ xyz$. 

\begin{claim}\label{CLM:at-most-two-triangle}
    For $\varepsilon \le 10^{-3}$, there are at most two triangles in $\mathcal{T}_1$ such that each of them is seen by less than $\varepsilon n$ $\mathcal{M}$-edges.
\end{claim}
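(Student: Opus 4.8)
The plan is to argue by contradiction: suppose there are three triangles $T^{(1)}, T^{(2)}, T^{(3)} \in \mathcal{T}_1$, each seen by fewer than $\varepsilon n$ $\mathcal{M}$-edges, and show that this forces $|H| < N$, contradicting $|H| = N$. Since $t_1 \ge t-1 \ge \delta n - 1 \ge 3$ for large $n$ by~\ref{THM:STABILITY-a}, three such triangles can indeed coexist. The engine is a refinement of the generic bound $e(\mathcal{T}_1, \mathcal{M}) \le 4 m t_1$ underlying the term $4\mu\tau_1$ of $f$. By~\eqref{EQ:bound-edge-triangle-edge-T1-M}, for any $xyz \in \mathcal{T}_1$ and $uv \in \mathcal{M}$ we have $e(uv, xyz) \le 4$, with equality only when $uv$ sees $xyz$; hence $e(uv, xyz) \le 3$ whenever $uv$ does not see $xyz$. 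Consequently, if a triangle $T \in \mathcal{T}_1$ is seen by $s < \varepsilon n$ of the $\mathcal{M}$-edges, then $e(T, \mathcal{M}) \le 4s + 3(m-s) = 3m + s < 3m + \varepsilon n$, a saving of more than $m - \varepsilon n$ over the generic value $4m$.

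Applying this to each of $T^{(1)}, T^{(2)}, T^{(3)}$ gives a total saving of at least $3(m - \varepsilon n)$ in $e(\mathcal{T}_1,\mathcal{M})$. Because the terms of $h$ bound edges between pairwise-distinct parts of the tiling triple, this saving passes directly into Lemma~\ref{LEMMA:upper-bound-H-h1-ABHP}, sharpening it to $|H| \le h(t_1,t_2,t_3,t_4,m,i) - 3(m-\varepsilon n)$. Invoking the chain of inequalities in the proof of Fact~\ref{fact1}, namely $h(t_1,t_2,t_3,t_4,m,i) \le h(t+1,0,0,0,m,i) \le e_1(n,t) + \tfrac{1}{2}(n-t-1)$, I obtain
\[
    |H| \le e_1(n,t) + \tfrac{1}{2}(n-t-1) - 3(m - \varepsilon n).
\]

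The final step is numerical. Using the stability bound $m > \tfrac{n-3(t+1)}{2} - \tfrac{\sqrt{2n}}{2}$ from~\ref{THM:STABILITY-b}, the right-hand side is at most $e_1(n,t) + (-n + 4t + 4) + \tfrac{3\sqrt{2n}}{2} + 3\varepsilon n$. Since $t \le \tfrac{2n-6}{9}$ gives $-n + 4t \le -\tfrac{n}{9} - \tfrac{24}{9}$, the correction becomes $-\tfrac{n}{9} + 3\varepsilon n + \tfrac{3\sqrt{2n}}{2} + O(1)$, which is negative of order $n$ as soon as $3\varepsilon < \tfrac{1}{9}$; this holds with large room to spare for $\varepsilon \le 10^{-3}$. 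Hence $|H| < e_1(n,t) + 2 = N$ for $n$ large, the desired contradiction. The only delicate point is this constant-chasing: the saving of order $3m \approx \tfrac{3(n-3t)}{2}$ from the three sparsely-seen triangles must overcome the slack $\tfrac{n-t}{2}$ in the generic bound, and the inequality $3(n-3t) > n - t$ needed is exactly $t < \tfrac{n}{4}$, which is guaranteed by the hypothesis $t \le \tfrac{2n-6}{9} < \tfrac{n}{4}$; everything else is routine arithmetic.
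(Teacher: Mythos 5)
Your proposal is correct and follows essentially the same route as the paper: assume three sparsely-seen triangles, refine the bound $e(\mathcal{T}_1,\mathcal{M})\le 4mt_1$ to $4mt_1-3(m-\varepsilon n)$ via the observation that a non-seeing $\mathcal{M}$-edge contributes at most $3$ edges to a triangle of $\mathcal{T}_1$, propagate this saving through $h$ and Fact~\ref{fact1}, and conclude $|H|\le e_1(n,t)-n+4t+6\varepsilon n+O(\sqrt{n})<N$ using the stability bound on $m$ and $t\le \tfrac{2n-6}{9}+1$. The only cosmetic difference is that you keep the $\tfrac{3\sqrt{2n}}{2}$ error term explicit where the paper absorbs it into an extra $\varepsilon n$; the argument is the same.
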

\begin{proof}
We consider the number of edges between $\mathcal{T}_1$ and $\mathcal{M}$. For $xyz\in \mathcal{T}_1$, by \eqref{EQ:bound-edge-triangle-edge-T1-M}, then $e( xyz,\mathcal{M})\leq 4m$. Moreover, if there are fewer than $\varepsilon n$ $\mathcal{M}$-edges that see $xyz$, then $e(xyz,\mathcal{M})\leq 4m-(m-\varepsilon n)$. Thus, 
if there are at least three triangles in $\mathcal{T}_1$ are each seen by less than $\varepsilon n$ $\mathcal{M}$-edges, then $$e(\mathcal{T}_1,\mathcal{M})\leq 4mt_1-3(m -\varepsilon n).$$ 
We update the bound on $e(\mathcal{T}_1,\mathcal{M})$ in $h$, replacing the earlier estimate $e(\mathcal{T}_1,\mathcal{M})\leq 4mt_1$ given by Lemma \ref{LEM:upper-bound-each-part-1}, yields that 
\begin{align*}
    |H |&\le f(t_1,t_2,t_3,t_4,m,i)+i m+m^2+(3+3m)t_4+(2+i)t_4+\binom{3t_4}{2}-3(m-\varepsilon n)\\
    &\le e_1(n,t) +\frac{1}{2} ( n- t-1)-3\left(\frac{1}2(n-3(t+1))-2\varepsilon n\right)\\
    &\le e_1(n,t)+4t-n+6\varepsilon n<N,
\end{align*}
 a contradiction. Note that the second inequality follows from  Fact \ref{fact1}. 
\end{proof}

\begin{claim}\label{CLM:structure-edge-between-T1}
    For any two triangles $ x_1y_1z_1, x_2y_2z_2\in\mathcal T_1$, we have that $z_1y_2$ and $z_1z_2$ (similarly, $y_1z_2$ and $y_1y_2$ ) cannot appear in $H$ simultaneously.  Consequently, $e(y_1, x_2y_2z_2)\le2$ and $e(z_1, x_2y_2z_2)\le2$.
\end{claim}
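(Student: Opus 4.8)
The plan is to prove each non-adjacency statement by contradiction, in each case producing $t+2$ pairwise vertex-disjoint triangles in $H$; since $H$ is rainbow, this would give a rainbow copy of $(t+2)K_3$, contradicting the hypothesis on $\chi$. I treat the first statement, that $z_1y_2$ and $z_1z_2$ cannot both lie in $H$; the statement for $y_1z_2,y_1y_2$ is obtained by interchanging the roles of $y_1$ and $z_1$.

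Suppose then that $z_1y_2,z_1z_2\in H$. Since $y_2z_2\in H$ is an edge of $x_2y_2z_2$, the triple $z_1y_2z_2$ is a triangle of $H$. Recall that $x_1$ and $x_2$ are the critical vertices of $x_1y_1z_1$ and $x_2y_2z_2$; by the definition of $\mathcal T_1$ together with Claim~\ref{CLM:matching-sees-same-vertex}, each of them is seen by at least two $\mathcal M$-edges, and every such edge meets its triangle only at the critical vertex. Hence there are at least two $\mathcal M$-edges $uv$ with $ux_1,vx_1\in H$ (each giving a triangle $uvx_1$), and similarly for $x_2$. The one nontrivial point is to select \emph{distinct} matching edges $u_1v_1$ seeing $x_1$ and $u_2v_2$ seeing $x_2$; this is always possible, since each critical vertex is witnessed by at least two matching edges, so a distinct assignment exists even when the two witnessing pairs coincide. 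Because $V(\mathcal M)\cap V(\mathcal T)=\emptyset$ and $u_1v_1,u_2v_2$ are disjoint edges of the matching $\mathcal M$, the triangles $z_1y_2z_2$, $u_1v_1x_1$, and $u_2v_2x_2$ are pairwise vertex-disjoint and disjoint from every triangle of $\mathcal T\setminus\{x_1y_1z_1,x_2y_2z_2\}$. Adjoining these three triangles to the $t-1$ triangles of $\mathcal T\setminus\{x_1y_1z_1,x_2y_2z_2\}$ (and simply discarding $y_1$) yields $t+2$ vertex-disjoint triangles, the required contradiction.

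For the consequence, the impossibility of $z_1y_2,z_1z_2$ appearing together shows that $z_1$ is adjacent to at most one of $y_2,z_2$; counting also the possible edge $z_1x_2$ gives $e(z_1,x_2y_2z_2)\le 2$, and the symmetric statement for $y_1$ gives $e(y_1,x_2y_2z_2)\le 2$. I expect the only genuinely delicate point to be the disjoint choice of the two matching edges attached to $x_1$ and $x_2$, but this reduces to an immediate pigeonhole argument using that each critical vertex is seen by at least two members of $\mathcal M$.
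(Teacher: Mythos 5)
Your proof is correct and follows essentially the same route as the paper's: assuming $z_1y_2,z_1z_2\in H$, replace $x_1y_1z_1$ and $x_2y_2z_2$ in $\mathcal T$ by the three disjoint triangles $u_1v_1x_1$, $u_2v_2x_2$, $z_1y_2z_2$ to obtain a (rainbow) copy of $(t+2)K_3$, a contradiction. The only difference is that you make explicit the pigeonhole selection of two distinct $\mathcal M$-edges witnessing $x_1$ and $x_2$, which the paper leaves implicit; that added detail is correct and harmless.
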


\begin{proof}
Suppose there exist two triangles $x_1y_1z_1, x_2y_2z_2\in \mathcal{T}_1$, along with two $\mathcal{M}$-edges $u_1v_1$ and $u_2v_2$, such that $u_1v_1$ sees $ x_1y_1z_1$ and $u_2v_2$ sees $ x_2y_2z_2$. 
If $z_1y_2,z_1z_2\in E(H)$, then 
\begin{align*}
    \left( \mathcal T\setminus\{ x_1y_1z_1, x_2y_2z_2\} \right) \cup\{u_1v_1x_1,u_2v_2x_2,z_1z_2y_2\}
\end{align*}
forms a copy of $(t+2)K_3$ in $H$ (see Figure~\ref{fig:z_1y_2,z_1z_2-|H |}), a contradiction. Similarly, at most one of $y_1y_2,y_1z_2$ can appear in $E(H)$. 
\end{proof}

\begin{figure}[htbp]
\begin{minipage}[t]{0.35\linewidth}
\centering

\tikzset{every picture/.style={line width=0.75pt}} 

\begin{tikzpicture}[x=0.75pt,y=0.75pt,yscale=-1,xscale=1]

\draw [color={rgb, 255:red, 136; green, 4; blue, 22 }  ,draw opacity=1 ][line width=1.5]  [dash pattern={on 1.69pt off 2.76pt}]  (53.72,105.17) -- (84.78,138.17) ;
\draw [color={rgb, 255:red, 136; green, 4; blue, 22 }  ,draw opacity=1 ][line width=1.5]    (53.72,105.17) -- (53.72,171.18) ;
\draw [color={rgb, 255:red, 136; green, 4; blue, 22 }  ,draw opacity=1 ][line width=1.5]  [dash pattern={on 1.69pt off 2.76pt}]  (53.72,171.18) -- (84.78,138.17) ;
\draw  [fill={rgb, 255:red, 0; green, 0; blue, 0 }  ,fill opacity=1 ] (51.99,107.49) .. controls (53.28,108.45) and (55.1,108.18) .. (56.05,106.89) .. controls (57.01,105.61) and (56.74,103.79) .. (55.45,102.84) .. controls (54.16,101.88) and (52.35,102.15) .. (51.39,103.44) .. controls (50.43,104.72) and (50.7,106.54) .. (51.99,107.49) -- cycle ;
\draw  [fill={rgb, 255:red, 0; green, 0; blue, 0 }  ,fill opacity=1 ] (51.99,173.5) .. controls (53.28,174.46) and (55.1,174.19) .. (56.05,172.91) .. controls (57.01,171.62) and (56.74,169.81) .. (55.45,168.85) .. controls (54.16,167.9) and (52.35,168.16) .. (51.39,169.45) .. controls (50.43,170.73) and (50.7,172.55) .. (51.99,173.5) -- cycle ;

\draw [color={rgb, 255:red, 136; green, 4; blue, 22 }  ,draw opacity=1 ][line width=1.5]  [dash pattern={on 1.69pt off 2.76pt}]  (115.29,82.84) -- (154.51,163.44) ;
\draw [color={rgb, 255:red, 136; green, 4; blue, 22 }  ,draw opacity=1 ][line width=1.5]  [dash pattern={on 1.69pt off 2.76pt}]  (115.29,82.84) -- (154.18,82.14) ;
\draw [color={rgb, 255:red, 0; green, 0; blue, 0 }  ,draw opacity=1 ][line width=1.5]    (115.18,164.11) -- (83.46,138.84) ;
\draw [color={rgb, 255:red, 0; green, 0; blue, 0 }  ,draw opacity=1 ][line width=1.5]    (115.51,82.8) -- (83.46,138.84) ;
\draw [color={rgb, 255:red, 0; green, 0; blue, 0 }  ,draw opacity=1 ][line width=1.5]    (115.18,164.11) -- (115.51,82.8) ;
\draw  [fill={rgb, 255:red, 0; green, 0; blue, 0 }  ,fill opacity=1 ] (118.16,82.45) .. controls (118.38,84.03) and (117.27,85.5) .. (115.68,85.71) .. controls (114.1,85.93) and (112.64,84.82) .. (112.42,83.23) .. controls (112.2,81.65) and (113.31,80.18) .. (114.9,79.97) .. controls (116.48,79.75) and (117.95,80.86) .. (118.16,82.45) -- cycle ;
\draw  [fill={rgb, 255:red, 0; green, 0; blue, 0 }  ,fill opacity=1 ] (86.33,137.45) .. controls (86.55,139.03) and (85.44,140.49) .. (83.85,140.71) .. controls (82.27,140.93) and (80.81,139.82) .. (80.59,138.23) .. controls (80.37,136.64) and (81.48,135.18) .. (83.07,134.97) .. controls (84.65,134.75) and (86.11,135.86) .. (86.33,137.45) -- cycle ;
\draw [color={rgb, 255:red, 0; green, 0; blue, 0 }  ,draw opacity=1 ][line width=1.5]    (154.51,163.44) -- (185.92,138.17) ;
\draw [color={rgb, 255:red, 0; green, 0; blue, 0 }  ,draw opacity=1 ][line width=1.5]    (154.18,82.14) -- (185.92,138.17) ;
\draw [color={rgb, 255:red, 0; green, 0; blue, 0 }  ,draw opacity=1 ][line width=1.5]    (154.51,163.44) -- (154.18,82.14) ;
\draw  [fill={rgb, 255:red, 0; green, 0; blue, 0 }  ,fill opacity=1 ] (151.55,81.78) .. controls (151.33,83.37) and (152.43,84.83) .. (154,85.05) .. controls (155.57,85.26) and (157.02,84.15) .. (157.24,82.57) .. controls (157.45,80.98) and (156.35,79.52) .. (154.78,79.3) .. controls (153.21,79.08) and (151.77,80.19) .. (151.55,81.78) -- cycle ;
\draw [color={rgb, 255:red, 136; green, 4; blue, 22 }  ,draw opacity=1 ][line width=1.5]  [dash pattern={on 1.69pt off 2.76pt}]  (216.92,105.17) -- (185.92,138.17) ;
\draw [color={rgb, 255:red, 136; green, 4; blue, 22 }  ,draw opacity=1 ][line width=1.5]    (216.92,105.17) -- (216.92,171.18) ;
\draw [color={rgb, 255:red, 136; green, 4; blue, 22 }  ,draw opacity=1 ][line width=1.5]  [dash pattern={on 1.69pt off 2.76pt}]  (216.92,171.18) -- (185.92,138.17) ;
\draw  [fill={rgb, 255:red, 0; green, 0; blue, 0 }  ,fill opacity=1 ] (218.65,107.49) .. controls (217.36,108.45) and (215.54,108.18) .. (214.59,106.89) .. controls (213.64,105.61) and (213.9,103.79) .. (215.19,102.84) .. controls (216.48,101.88) and (218.29,102.15) .. (219.25,103.44) .. controls (220.2,104.72) and (219.93,106.54) .. (218.65,107.49) -- cycle ;
\draw  [fill={rgb, 255:red, 0; green, 0; blue, 0 }  ,fill opacity=1 ] (218.65,173.5) .. controls (217.36,174.46) and (215.54,174.19) .. (214.59,172.91) .. controls (213.64,171.62) and (213.9,169.81) .. (215.19,168.85) .. controls (216.48,167.9) and (218.29,168.16) .. (219.25,169.45) .. controls (220.2,170.73) and (219.93,172.55) .. (218.65,173.5) -- cycle ;

\draw  [fill={rgb, 255:red, 0; green, 0; blue, 0 }  ,fill opacity=1 ] (187.65,140.5) .. controls (186.36,141.45) and (184.54,141.19) .. (183.59,139.9) .. controls (182.64,138.62) and (182.9,136.8) .. (184.19,135.84) .. controls (185.48,134.89) and (187.29,135.16) .. (188.25,136.44) .. controls (189.2,137.73) and (188.93,139.54) .. (187.65,140.5) -- cycle ;
\draw  [fill={rgb, 255:red, 0; green, 0; blue, 0 }  ,fill opacity=1 ] (156.23,165.77) .. controls (154.95,166.72) and (153.13,166.45) .. (152.18,165.17) .. controls (151.22,163.88) and (151.49,162.07) .. (152.78,161.11) .. controls (154.07,160.16) and (155.88,160.43) .. (156.84,161.71) .. controls (157.79,163) and (157.52,164.81) .. (156.23,165.77) -- cycle ;
\draw  [fill={rgb, 255:red, 0; green, 0; blue, 0 }  ,fill opacity=1 ] (116.9,166.43) .. controls (115.62,167.39) and (113.8,167.12) .. (112.85,165.83) .. controls (111.89,164.55) and (112.16,162.73) .. (113.45,161.78) .. controls (114.74,160.82) and (116.55,161.09) .. (117.51,162.38) .. controls (118.46,163.66) and (118.19,165.48) .. (116.9,166.43) -- cycle ;

\draw (54,92) node [anchor=north west][inner sep=0.75pt]    {$u_{1}$};
\draw (100.67,164) node [anchor=north west][inner sep=0.75pt]    {$y_{1}$};
\draw (218.77,92) node [anchor=north west][inner sep=0.75pt]    {$u_{2}$};
\draw (220.19,166.4) node [anchor=north west][inner sep=0.75pt]    {$v_{2}$};
\draw (35.43,166.4) node [anchor=north west][inner sep=0.75pt]    {$v_{1}$};
\draw (156.43,164) node [anchor=north west][inner sep=0.75pt]    {$y_{2}$};
\draw (176,145) node [anchor=north west][inner sep=0.75pt]    {$x_{2}$};
\draw (109,67) node [anchor=north west][inner sep=0.75pt]    {$z_{1}$};
\draw (149.1,67) node [anchor=north west][inner sep=0.75pt]    {$z_{2}$};
\draw (77.1,145) node [anchor=north west][inner sep=0.75pt]    {$x_{1}$};
\draw (122,199.4) node [anchor=north west][inner sep=0.75pt]    {$\ \ \ \ \ \ \ \ \ $};
\end{tikzpicture}

\caption{$z_1y_2,z_1z_2 \in |H |$.}
     \label{fig:z_1y_2,z_1z_2-|H |}
\end{minipage}%
\begin{minipage}[t]{0.75\linewidth}

\centering

\tikzset{every picture/.style={line width=0.75pt}} 

\begin{tikzpicture}[x=0.7pt,y=0.7pt,yscale=-0.9,xscale=0.9]

\draw   (152.89,80.39) .. controls (152.89,68.6) and (188.42,59.05) .. (232.25,59.05) .. controls (276.09,59.05) and (311.62,68.6) .. (311.62,80.39) .. controls (311.62,92.18) and (276.09,101.73) .. (232.25,101.73) .. controls (188.42,101.73) and (152.89,92.18) .. (152.89,80.39) -- cycle ;
\draw [color={rgb, 255:red, 136; green, 4; blue, 22 }  ,draw opacity=1 ][line width=1.5]  [dash pattern={on 1.69pt off 2.76pt}]  (290.51,167.11) -- (240.51,149.11) ;
\draw [color={rgb, 255:red, 136; green, 4; blue, 22 }  ,draw opacity=1 ][line width=1.5]  [dash pattern={on 1.69pt off 2.76pt}]  (188.51,167.11) -- (290.51,167.11) ;
\draw [color={rgb, 255:red, 136; green, 4; blue, 22 }  ,draw opacity=1 ][line width=1.5]  [dash pattern={on 1.69pt off 2.76pt}]  (188.51,167.11) -- (240.51,149.11) ;
\draw [color={rgb, 255:red, 0; green, 0; blue, 0 }  ,draw opacity=1 ][line width=1.5]    (188.51,167.11) -- (156.79,141.84) ;
\draw [color={rgb, 255:red, 0; green, 0; blue, 0 }  ,draw opacity=1 ][line width=1.5]    (188.84,85.8) -- (156.79,141.84) ;
\draw [color={rgb, 255:red, 0; green, 0; blue, 0 }  ,draw opacity=1 ][line width=1.5]    (188.51,167.11) -- (188.84,85.8) ;
\draw  [color={rgb, 255:red, 136; green, 4; blue, 22 }  ,draw opacity=1 ][fill={rgb, 255:red, 136; green, 4; blue, 22 }  ,fill opacity=1 ][line width=1.5]  (186.37,169.21) .. controls (185.21,168.03) and (185.22,166.13) .. (186.41,164.97) .. controls (187.59,163.81) and (189.49,163.82) .. (190.65,165) .. controls (191.81,166.18) and (191.8,168.08) .. (190.61,169.24) .. controls (189.43,170.41) and (187.53,170.39) .. (186.37,169.21) -- cycle ;
\draw  [fill={rgb, 255:red, 0; green, 0; blue, 0 }  ,fill opacity=1 ] (191.5,85.45) .. controls (191.71,87.03) and (190.6,88.5) .. (189.02,88.71) .. controls (187.43,88.93) and (185.97,87.82) .. (185.75,86.23) .. controls (185.54,84.65) and (186.65,83.18) .. (188.23,82.97) .. controls (189.82,82.75) and (191.28,83.86) .. (191.5,85.45) -- cycle ;
\draw  [fill={rgb, 255:red, 0; green, 0; blue, 0 }  ,fill opacity=1 ] (159.66,141.45) .. controls (159.88,143.03) and (158.77,144.49) .. (157.19,144.71) .. controls (155.6,144.93) and (154.14,143.82) .. (153.92,142.23) .. controls (153.7,140.64) and (154.81,139.18) .. (156.4,138.97) .. controls (157.99,138.75) and (159.45,139.86) .. (159.66,141.45) -- cycle ;

\draw   (142.89,127.43) .. controls (142.89,119.66) and (149.18,113.37) .. (156.95,113.37) -- (308.33,113.37) .. controls (316.1,113.37) and (322.39,119.66) .. (322.39,127.43) -- (322.39,169.6) .. controls (322.39,177.36) and (316.1,183.66) .. (308.33,183.66) -- (156.95,183.66) .. controls (149.18,183.66) and (142.89,177.36) .. (142.89,169.6) -- cycle ;
\draw [color={rgb, 255:red, 0; green, 0; blue, 0 }  ,draw opacity=1 ][line width=1.5]    (290.51,167.11) -- (258.79,141.84) ;
\draw [color={rgb, 255:red, 0; green, 0; blue, 0 }  ,draw opacity=1 ][line width=1.5]    (290.84,85.8) -- (258.79,141.84) ;
\draw [color={rgb, 255:red, 0; green, 0; blue, 0 }  ,draw opacity=1 ][line width=1.5]    (290.51,167.11) -- (290.84,85.8) ;
\draw  [color={rgb, 255:red, 136; green, 4; blue, 22 }  ,draw opacity=1 ][fill={rgb, 255:red, 136; green, 4; blue, 22 }  ,fill opacity=1 ][line width=1.5]  (288.37,169.21) .. controls (287.21,168.03) and (287.22,166.13) .. (288.41,164.97) .. controls (289.59,163.81) and (291.49,163.82) .. (292.65,165) .. controls (293.81,166.18) and (293.8,168.08) .. (292.61,169.24) .. controls (291.43,170.41) and (289.53,170.39) .. (288.37,169.21) -- cycle ;
\draw  [fill={rgb, 255:red, 0; green, 0; blue, 0 }  ,fill opacity=1 ] (293.5,85.45) .. controls (293.71,87.03) and (292.6,88.5) .. (291.02,88.71) .. controls (289.43,88.93) and (287.97,87.82) .. (287.75,86.23) .. controls (287.54,84.65) and (288.65,83.18) .. (290.23,82.97) .. controls (291.82,82.75) and (293.28,83.86) .. (293.5,85.45) -- cycle ;
\draw  [fill={rgb, 255:red, 0; green, 0; blue, 0 }  ,fill opacity=1 ] (261.66,141.45) .. controls (261.88,143.03) and (260.77,144.49) .. (259.19,144.71) .. controls (257.6,144.93) and (256.14,143.82) .. (255.92,142.23) .. controls (255.7,140.64) and (256.81,139.18) .. (258.4,138.97) .. controls (259.99,138.75) and (261.45,139.86) .. (261.66,141.45) -- cycle ;

\draw [color={rgb, 255:red, 0; green, 0; blue, 0 }  ,draw opacity=1 ][line width=1.5]    (240.51,149.11) -- (208.79,123.84) ;
\draw [color={rgb, 255:red, 0; green, 0; blue, 0 }  ,draw opacity=1 ][line width=1.5]    (240.84,67.8) -- (208.79,123.84) ;
\draw [color={rgb, 255:red, 0; green, 0; blue, 0 }  ,draw opacity=1 ][line width=1.5]    (240.51,149.11) -- (240.84,67.8) ;
\draw  [color={rgb, 255:red, 136; green, 4; blue, 22 }  ,draw opacity=1 ][fill={rgb, 255:red, 136; green, 4; blue, 22 }  ,fill opacity=1 ][line width=1.5]  (238.37,151.21) .. controls (237.21,150.03) and (237.22,148.13) .. (238.41,146.97) .. controls (239.59,145.81) and (241.49,145.82) .. (242.65,147) .. controls (243.81,148.18) and (243.8,150.08) .. (242.61,151.24) .. controls (241.43,152.41) and (239.53,152.39) .. (238.37,151.21) -- cycle ;
\draw  [fill={rgb, 255:red, 0; green, 0; blue, 0 }  ,fill opacity=1 ] (243.5,67.45) .. controls (243.71,69.03) and (242.6,70.5) .. (241.02,70.71) .. controls (239.43,70.93) and (237.97,69.82) .. (237.75,68.23) .. controls (237.54,66.65) and (238.65,65.18) .. (240.23,64.97) .. controls (241.82,64.75) and (243.28,65.86) .. (243.5,67.45) -- cycle ;
\draw  [fill={rgb, 255:red, 0; green, 0; blue, 0 }  ,fill opacity=1 ] (211.66,123.45) .. controls (211.88,125.03) and (210.77,126.49) .. (209.19,126.71) .. controls (207.6,126.93) and (206.14,125.82) .. (205.92,124.23) .. controls (205.7,122.64) and (206.81,121.18) .. (208.4,120.97) .. controls (209.99,120.75) and (211.45,121.86) .. (211.66,123.45) -- cycle ;

\draw [color={rgb, 255:red, 136; green, 4; blue, 22 }  ,draw opacity=1 ][line width=1.5]  [dash pattern={on 1.69pt off 2.76pt}]  (380.26,43.71) -- (402.2,85.71) ;
\draw [color={rgb, 255:red, 136; green, 4; blue, 22 }  ,draw opacity=1 ][line width=1.5]    (380.26,43.71) -- (424.15,43.7) ;
\draw [color={rgb, 255:red, 136; green, 4; blue, 22 }  ,draw opacity=1 ][line width=1.5]  [dash pattern={on 1.69pt off 2.76pt}]  (424.15,43.7) -- (402.2,85.71) ;
\draw  [fill={rgb, 255:red, 0; green, 0; blue, 0 }  ,fill opacity=1 ] (383.06,42.94) .. controls (383.48,44.49) and (382.57,46.08) .. (381.03,46.51) .. controls (379.49,46.93) and (377.89,46.02) .. (377.47,44.48) .. controls (377.05,42.93) and (377.95,41.34) .. (379.5,40.91) .. controls (381.04,40.49) and (382.64,41.4) .. (383.06,42.94) -- cycle ;
\draw  [fill={rgb, 255:red, 0; green, 0; blue, 0 }  ,fill opacity=1 ] (426.94,42.93) .. controls (427.36,44.48) and (426.45,46.07) .. (424.91,46.5) .. controls (423.37,46.92) and (421.77,46.01) .. (421.35,44.47) .. controls (420.93,42.92) and (421.84,41.33) .. (423.38,40.9) .. controls (424.92,40.48) and (426.52,41.39) .. (426.94,42.93) -- cycle ;

\draw   (362.56,87.63) .. controls (362.56,79.28) and (390.59,72.52) .. (425.17,72.52) .. controls (459.75,72.52) and (487.78,79.28) .. (487.78,87.63) .. controls (487.78,95.97) and (459.75,102.73) .. (425.17,102.73) .. controls (390.59,102.73) and (362.56,95.97) .. (362.56,87.63) -- cycle ;
\draw [color={rgb, 255:red, 136; green, 4; blue, 22 }  ,draw opacity=1 ][line width=1.5]  [dash pattern={on 1.69pt off 2.76pt}]  (402.18,167.11) -- (432.79,142.17) ;
\draw [color={rgb, 255:red, 136; green, 4; blue, 22 }  ,draw opacity=1 ][line width=1.5]  [dash pattern={on 1.69pt off 2.76pt}]  (402.18,167.11) -- (464.51,167.44) ;
\draw [color={rgb, 255:red, 0; green, 0; blue, 0 }  ,draw opacity=1 ][line width=1.5]    (402.18,167.11) -- (370.46,141.84) ;
\draw [color={rgb, 255:red, 0; green, 0; blue, 0 }  ,draw opacity=1 ][line width=1.5]    (402.51,85.8) -- (370.46,141.84) ;
\draw [color={rgb, 255:red, 0; green, 0; blue, 0 }  ,draw opacity=1 ][line width=1.5]    (402.18,167.11) -- (402.51,85.8) ;
\draw  [color={rgb, 255:red, 136; green, 4; blue, 22 }  ,draw opacity=1 ][fill={rgb, 255:red, 136; green, 4; blue, 22 }  ,fill opacity=1 ][line width=1.5]  (400.04,169.21) .. controls (398.88,168.03) and (398.89,166.13) .. (400.07,164.97) .. controls (401.25,163.81) and (403.15,163.82) .. (404.31,165) .. controls (405.48,166.18) and (405.46,168.08) .. (404.28,169.24) .. controls (403.1,170.41) and (401.2,170.39) .. (400.04,169.21) -- cycle ;
\draw  [fill={rgb, 255:red, 0; green, 0; blue, 0 }  ,fill opacity=1 ] (405.16,85.45) .. controls (405.38,87.03) and (404.27,88.5) .. (402.68,88.71) .. controls (401.1,88.93) and (399.64,87.82) .. (399.42,86.23) .. controls (399.2,84.65) and (400.31,83.18) .. (401.9,82.97) .. controls (403.48,82.75) and (404.95,83.86) .. (405.16,85.45) -- cycle ;
\draw  [fill={rgb, 255:red, 0; green, 0; blue, 0 }  ,fill opacity=1 ] (373.33,141.45) .. controls (373.55,143.03) and (372.44,144.49) .. (370.85,144.71) .. controls (369.27,144.93) and (367.81,143.82) .. (367.59,142.23) .. controls (367.37,140.64) and (368.48,139.18) .. (370.07,138.97) .. controls (371.65,138.75) and (373.11,139.86) .. (373.33,141.45) -- cycle ;

\draw   (350.78,127.43) .. controls (350.78,119.66) and (357.07,113.37) .. (364.84,113.37) -- (488.72,113.37) .. controls (496.48,113.37) and (502.78,119.66) .. (502.78,127.43) -- (502.78,169.6) .. controls (502.78,177.36) and (496.48,183.66) .. (488.72,183.66) -- (364.84,183.66) .. controls (357.07,183.66) and (350.78,177.36) .. (350.78,169.6) -- cycle ;
\draw [color={rgb, 255:red, 136; green, 4; blue, 22 }  ,draw opacity=1 ][line width=1.5]    (464.51,167.44) -- (432.79,142.17) ;
\draw [color={rgb, 255:red, 0; green, 0; blue, 0 }  ,draw opacity=1 ][line width=1.5]    (464.84,86.14) -- (432.79,142.17) ;
\draw [color={rgb, 255:red, 0; green, 0; blue, 0 }  ,draw opacity=1 ][line width=1.5]    (464.51,167.44) -- (464.84,86.14) ;
\draw  [color={rgb, 255:red, 136; green, 4; blue, 22 }  ,draw opacity=1 ][fill={rgb, 255:red, 136; green, 4; blue, 22 }  ,fill opacity=1 ][line width=1.5]  (462.37,169.54) .. controls (461.21,168.36) and (461.22,166.46) .. (462.41,165.3) .. controls (463.59,164.14) and (465.49,164.15) .. (466.65,165.33) .. controls (467.81,166.52) and (467.8,168.42) .. (466.61,169.58) .. controls (465.43,170.74) and (463.53,170.72) .. (462.37,169.54) -- cycle ;
\draw  [fill={rgb, 255:red, 0; green, 0; blue, 0 }  ,fill opacity=1 ] (467.5,85.78) .. controls (467.71,87.37) and (466.6,88.83) .. (465.02,89.05) .. controls (463.43,89.26) and (461.97,88.15) .. (461.75,86.57) .. controls (461.54,84.98) and (462.65,83.52) .. (464.23,83.3) .. controls (465.82,83.08) and (467.28,84.19) .. (467.5,85.78) -- cycle ;
\draw  [color={rgb, 255:red, 136; green, 4; blue, 22 }  ,draw opacity=1 ][fill={rgb, 255:red, 136; green, 4; blue, 22 }  ,fill opacity=1 ][line width=1.5]  (430.65,144.28) .. controls (429.49,143.09) and (429.51,141.2) .. (430.69,140.03) .. controls (431.87,138.87) and (433.77,138.89) .. (434.93,140.07) .. controls (436.09,141.25) and (436.08,143.15) .. (434.9,144.31) .. controls (433.72,145.47) and (431.82,145.46) .. (430.65,144.28) -- cycle ;
\draw [color={rgb, 255:red, 136; green, 4; blue, 22 }  ,draw opacity=1 ][line width=1.5]  [dash pattern={on 1.69pt off 2.76pt}]  (442.93,43.38) -- (464.87,85.37) ;
\draw [color={rgb, 255:red, 136; green, 4; blue, 22 }  ,draw opacity=1 ][line width=1.5]    (442.93,43.38) -- (486.81,43.37) ;
\draw [color={rgb, 255:red, 136; green, 4; blue, 22 }  ,draw opacity=1 ][line width=1.5]  [dash pattern={on 1.69pt off 2.76pt}]  (486.81,43.37) -- (464.87,85.37) ;
\draw  [fill={rgb, 255:red, 0; green, 0; blue, 0 }  ,fill opacity=1 ] (445.73,42.61) .. controls (446.15,44.16) and (445.24,45.75) .. (443.7,46.17) .. controls (442.15,46.6) and (440.56,45.69) .. (440.14,44.14) .. controls (439.71,42.6) and (440.62,41) .. (442.17,40.58) .. controls (443.71,40.16) and (445.3,41.07) .. (445.73,42.61) -- cycle ;
\draw  [fill={rgb, 255:red, 0; green, 0; blue, 0 }  ,fill opacity=1 ] (489.61,42.6) .. controls (490.03,44.15) and (489.12,45.74) .. (487.58,46.16) .. controls (486.03,46.59) and (484.44,45.68) .. (484.02,44.13) .. controls (483.59,42.59) and (484.5,40.99) .. (486.05,40.57) .. controls (487.59,40.15) and (489.18,41.06) .. (489.61,42.6) -- cycle ;

\draw [color={rgb, 255:red, 136; green, 4; blue, 22 }  ,draw opacity=1 ][line width=1.5]  [dash pattern={on 1.69pt off 2.76pt}]  (166.6,43.38) -- (188.54,85.37) ;
\draw [color={rgb, 255:red, 136; green, 4; blue, 22 }  ,draw opacity=1 ][line width=1.5]    (166.6,43.38) -- (210.48,43.37) ;
\draw [color={rgb, 255:red, 136; green, 4; blue, 22 }  ,draw opacity=1 ][line width=1.5]  [dash pattern={on 1.69pt off 2.76pt}]  (210.48,43.37) -- (188.54,85.37) ;
\draw  [fill={rgb, 255:red, 0; green, 0; blue, 0 }  ,fill opacity=1 ] (169.39,42.61) .. controls (169.82,44.16) and (168.91,45.75) .. (167.36,46.17) .. controls (165.82,46.6) and (164.23,45.69) .. (163.8,44.14) .. controls (163.38,42.6) and (164.29,41) .. (165.83,40.58) .. controls (167.38,40.16) and (168.97,41.07) .. (169.39,42.61) -- cycle ;
\draw  [fill={rgb, 255:red, 0; green, 0; blue, 0 }  ,fill opacity=1 ] (213.27,42.6) .. controls (213.7,44.15) and (212.79,45.74) .. (211.24,46.16) .. controls (209.7,46.59) and (208.11,45.68) .. (207.68,44.13) .. controls (207.26,42.59) and (208.17,40.99) .. (209.71,40.57) .. controls (211.26,40.15) and (212.85,41.06) .. (213.27,42.6) -- cycle ;

\draw [color={rgb, 255:red, 136; green, 4; blue, 22 }  ,draw opacity=1 ][line width=1.5]  [dash pattern={on 1.69pt off 2.76pt}]  (218.93,25.71) -- (240.87,67.71) ;
\draw [color={rgb, 255:red, 136; green, 4; blue, 22 }  ,draw opacity=1 ][line width=1.5]    (218.93,25.71) -- (262.81,25.7) ;
\draw [color={rgb, 255:red, 136; green, 4; blue, 22 }  ,draw opacity=1 ][line width=1.5]  [dash pattern={on 1.69pt off 2.76pt}]  (262.81,25.7) -- (240.87,67.71) ;
\draw  [fill={rgb, 255:red, 0; green, 0; blue, 0 }  ,fill opacity=1 ] (221.73,24.94) .. controls (222.15,26.49) and (221.24,28.08) .. (219.7,28.51) .. controls (218.15,28.93) and (216.56,28.02) .. (216.14,26.48) .. controls (215.71,24.93) and (216.62,23.34) .. (218.17,22.91) .. controls (219.71,22.49) and (221.3,23.4) .. (221.73,24.94) -- cycle ;
\draw  [fill={rgb, 255:red, 0; green, 0; blue, 0 }  ,fill opacity=1 ] (265.61,24.93) .. controls (266.03,26.48) and (265.12,28.07) .. (263.58,28.5) .. controls (262.03,28.92) and (260.44,28.01) .. (260.02,26.47) .. controls (259.59,24.92) and (260.5,23.33) .. (262.05,22.9) .. controls (263.59,22.48) and (265.18,23.39) .. (265.61,24.93) -- cycle ;

\draw [color={rgb, 255:red, 136; green, 4; blue, 22 }  ,draw opacity=1 ][line width=1.5]  [dash pattern={on 1.69pt off 2.76pt}]  (268.93,43.04) -- (290.87,85.04) ;
\draw [color={rgb, 255:red, 136; green, 4; blue, 22 }  ,draw opacity=1 ][line width=1.5]    (268.93,43.04) -- (312.81,43.03) ;
\draw [color={rgb, 255:red, 136; green, 4; blue, 22 }  ,draw opacity=1 ][line width=1.5]  [dash pattern={on 1.69pt off 2.76pt}]  (312.81,43.03) -- (290.87,85.04) ;
\draw  [fill={rgb, 255:red, 0; green, 0; blue, 0 }  ,fill opacity=1 ] (271.73,42.28) .. controls (272.15,43.82) and (271.24,45.42) .. (269.7,45.84) .. controls (268.15,46.26) and (266.56,45.35) .. (266.14,43.81) .. controls (265.71,42.26) and (266.62,40.67) .. (268.17,40.25) .. controls (269.71,39.82) and (271.3,40.73) .. (271.73,42.28) -- cycle ;
\draw  [fill={rgb, 255:red, 0; green, 0; blue, 0 }  ,fill opacity=1 ] (315.61,42.27) .. controls (316.03,43.81) and (315.12,45.41) .. (313.58,45.83) .. controls (312.03,46.25) and (310.44,45.34) .. (310.02,43.8) .. controls (309.59,42.25) and (310.5,40.66) .. (312.05,40.24) .. controls (313.59,39.81) and (315.18,40.72) .. (315.61,42.27) -- cycle ;

\draw (164.1,161) node [anchor=north west][inner sep=0.75pt]    {$\omega _{1}$};
\draw (212.5,139) node [anchor=north west][inner sep=0.75pt]    {$\omega _{2}$};
\draw (294,161) node [anchor=north west][inner sep=0.75pt]    {$\omega _{3}$};
\draw (298,121.4) node [anchor=north west][inner sep=0.75pt]    {$V'$};
\draw (217.01,193) node [anchor=north west][inner sep=0.75pt]    {$J=3$};
\draw (415,125) node [anchor=north west][inner sep=0.75pt]    {$\omega _{1}$};
\draw (468,161) node [anchor=north west][inner sep=0.75pt]    {$\omega _{2}$};
\draw (376,161) node [anchor=north west][inner sep=0.75pt]    {$\omega _{3}$};
\draw (478,121.4) node [anchor=north west][inner sep=0.75pt]    {$V'$};
\draw (407.68,193) node [anchor=north west][inner sep=0.75pt]    {$J=2$};

\end{tikzpicture}

\caption{$|\{\omega_1,\omega_2,\omega_3\}\cap V(\mathcal{M})|=0$.}
    \label{fig:cap=0}
\end{minipage}
\end{figure}

By Claim \ref{CLM:structure-edge-between-T1}, for each pair of triangles $x_1y_1z_1, x_2y_2z_2\in \mathcal{T}_1$, we have
\begin{align}\label{EQ:edge-between-triangles-T1}
    e(x_1y_1z_1, x_2y_2z_2)\leq 7. 
\end{align}

For simplicity, let $H'$ be the subgraph of $H$ induced by $V(\mathcal M)\cup V'$ and recall $V'$ is the set of all non-critical vertices of the triangles in $\mathcal{T}_1$. Then $H'$ has $2t_1+2m$ vertices. The crucial point is to demonstrate  that $H'$ is bipartite. 

\begin{claim}\label{CLAIM:K3-free-V'-M}
    The graph $H'$ is triangle-free.
\end{claim}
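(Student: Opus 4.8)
The plan is to argue by contradiction: suppose $H'$ contains a triangle $\omega_1\omega_2\omega_3$, and show that $H$ would then contain $t+2$ pairwise vertex-disjoint triangles, contradicting the $(t+2)K_3$-freeness of $H$. The engine of the argument is a \emph{rescue} operation. Write $\mathcal S$ for the set of triangles in $\mathcal T_1$ having a non-critical vertex among $\omega_1,\omega_2,\omega_3$, and set $J=|\mathcal S|$. For each $T\in\mathcal S$, with critical vertex $x_T$, recall that $T\in\mathcal T_1$ is seen by at least two matching edges, every one of which sees precisely $x_T$ (Claim~\ref{CLM:matching-sees-same-vertex}); choosing such an edge $e_T=u_Tv_T$ yields a triangle $u_Tv_Tx_T$. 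Provided the edges $\{e_T:T\in\mathcal S\}$ can be chosen pairwise distinct and disjoint from $\{\omega_1,\omega_2,\omega_3\}$, the family
\begin{align*}
    (\mathcal T\setminus\mathcal S)\cup\{\omega_1\omega_2\omega_3\}\cup\{u_Tv_Tx_T:T\in\mathcal S\}
\end{align*}
consists of $(t+1-J)+1+J=t+2$ pairwise vertex-disjoint triangles, the desired contradiction. Disjointness is immediate once the $e_T$ avoid $\{\omega_1,\omega_2,\omega_3\}$, since the critical vertices $x_T$, the non-critical vertices feeding $\omega_1\omega_2\omega_3$, and the untouched triangles of $\mathcal T\setminus\mathcal S$ all lie in distinct vertex classes.

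Thus the whole problem reduces to the existence of a suitable system of distinct matching edges, and I would organize the verification by $r:=|\{\omega_1,\omega_2,\omega_3\}\cap V(\mathcal M)|$. When $r=3$ there is nothing to rescue ($J=0$), and $\omega_1\omega_2\omega_3$ together with $\mathcal T$ already forms $t+2$ disjoint triangles. In general $J\le 3-r$, while the number of \emph{forbidden} matching edges (those meeting $\{\omega_1,\omega_2,\omega_3\}\cap V(\mathcal M)$) is at most $r$; so in every case I must select $J$ distinct non-forbidden edges, one seeing each $T\in\mathcal S$. Here Claim~\ref{CLM:at-most-two-triangle} does the heavy lifting: all but at most two triangles of $\mathcal T_1$ are seen by at least $\varepsilon n$ matching edges, so any involved triangle other than these two exceptional ones offers at least $\varepsilon n-O(1)$ admissible choices. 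A short Hall-type (equivalently, greedy) argument, assigning edges to the at most two poorly-seen triangles first and then to the well-seen ones, then produces the required system of distinct representatives in the generic situation.

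The main obstacle is the handful of degenerate configurations in which an involved poorly-seen triangle $T$, seen by only its two guaranteed matching edges, has \emph{both} of these edges forbidden, i.e.\ each contains a vertex of $\{\omega_1,\omega_2,\omega_3\}\cap V(\mathcal M)$. This can only occur when $r\ge 1$, and since a matching edge seeing $T$ has both endpoints adjacent to $x_T$, the blockage forces extra adjacencies between the $V(\mathcal M)$-vertices of $\omega_1\omega_2\omega_3$ and the critical vertex $x_T$. I would exploit precisely these forced edges to \emph{reroute}: for instance, when two endpoints of $\omega_1\omega_2\omega_3$ lie in $V(\mathcal M)$ and are both adjacent to $x_T$, the edge of $\omega_1\omega_2\omega_3$ joining them closes up with $x_T$ into a new triangle, after which the liberated matching partners and the remaining non-critical vertex of $T$ can be recombined, using the edge restrictions of Claim~\ref{CLM:structure-edge-between-T1} and the $C_4$ supplied by Claim~\ref{CLM:two-edge-contian-C4}, to recover the missing triangle and again reach a total of $t+2$. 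Checking these low-dimensional configurations one by one, indexed by $r$ and by whether the vertices in $V'$ come from one or two triangles as in Figure~\ref{fig:cap=0}, is where the genuine case analysis lives; everything else is bookkeeping driven by the rescue identity above.
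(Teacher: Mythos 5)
Your generic argument (choose, for each $\mathcal{T}_1$-triangle contributing a non-critical vertex to $\omega_1\omega_2\omega_3$, a distinct matching edge avoiding $\{\omega_1,\omega_2,\omega_3\}$ and swap) is exactly what the paper does in Case~1 and in the non-degenerate part of Case~2, and it is fine there. The gap is in the degenerate configurations, which you correctly flag as the crux but then dispose of with an unsubstantiated ``rerouting.'' Take the case $\{\omega_1,\omega_2,\omega_3\}\cap V(\mathcal M)=\{\omega_1,\omega_2\}$ with $\omega_3$ a non-critical vertex of $xy\omega_3\in\mathcal T_1$, and suppose the only $\mathcal M$-edges seeing $xy\omega_3$ are $\omega_1v_1$ and $\omega_2v_2$. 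Your plan is to use the triangle $\omega_1\omega_2x$ and recombine the ``liberated'' vertices $v_1,v_2$ with $y,\omega_3$ into a new triangle. But the only freely available edge among $\{v_1,v_2,y,\omega_3\}$ is $y\omega_3$, and the adjacencies you would need are precisely the ones whose presence already yields an immediate $(t+2)K_3$ (e.g.\ $v_1v_2\in E(H)$ gives $xv_1v_2$ plus $\omega_1\omega_2\omega_3$; $v_iy\in E(H)$ gives $xyv_i$ plus $\omega_1\omega_2\omega_3$). So in the surviving configuration these edges are \emph{absent}, and no amount of rerouting inside this vertex set produces a $(t+2)K_3$; Claim~\ref{CLM:two-edge-contian-C4} supplies a $C_4$ somewhere in $\mathcal M$, not one through $v_1,v_2$, and Claim~\ref{CLM:structure-edge-between-T1} only forbids edges, so neither rescues you. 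The analogous obstruction occurs in Case~2 when the two triangles supplying $\omega_2,\omega_3$ are seen only by the same two $\mathcal M$-edges, one of which contains $\omega_1$.

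The paper escapes these configurations by a different mechanism that your proposal never invokes: it uses the hypothesis $|H|=N$. The forced non-adjacencies above are converted into refined upper bounds, e.g.\ $e(\mathcal M)\le m^2-1$, $e(\mathcal T_1,\mathcal M)\le 4mt_1-m$ and $e(\mathcal T_1)\le 7\binom{t_1}{2}+3t_1-(t_1-2)$ in Case~3 (and $e(\mathcal T_1,\mathcal M)\le 4mt_1-2(m-2)$, $e(\mathcal T_1)\le 7\binom{t_1}{2}+3t_1-2(t_1-2)$ in Case~2), which fed into the master function $h$ via Facts~\ref{FACT:function-q} and~\ref{fact1} force $|H|<N$, a contradiction. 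Since your argument relies on $(t+2)K_3$-freeness alone and the degenerate configurations are not excluded by that property, this counting step is a missing idea, not a bookkeeping detail; as written, the proof does not close.
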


\begin{proof}
By contradiction, assume that there exists a triangle $\omega_1\omega_2\omega_3$  in $H[V(\mathcal M)\cup V']$. Recall that  $H[V(\mathcal M)]$ is triangle-free by the assumption. It follows that $0\le |\{\omega_1,\omega_2,\omega_3\}\cap V(\mathcal{M})|\le 2$.

{\bf Case 1.} $|\{\omega_1,\omega_2,\omega_3\}\cap V(\mathcal{M})|=0$.

Suppose that $\omega_1\omega_2\omega_3$ intersects with $x_jy_jz_j\in \mathcal{T}_1$ for each $1\le j\le J$. Then $2\le J\le 3$. Without loss of generality, we focus on the case $J = 3$, as the argument for $J = 2$ follows similarly. By Claim \ref{CLM:at-most-two-triangle}, there exist an $\mathcal M$-edge $u_jv_j$ for $j\in [3]$ such that $u_jv_j$ sees $x_jy_jz_j$. Then $\mathcal T\setminus\{ x_1y_1z_1, x_2y_2z_2, x_3y_3z_3\}\cup\{u_1v_1x_1,u_2v_2x_2,u_3v_3x_3,\omega_1\omega_2\omega_3\}$ (see Figure~\ref{fig:cap=0}) is a copy of $(t+2)K_3$ in $H$,  a contradiction.

{\bf Case 2.} $|\{\omega_1,\omega_2,\omega_3\}\cap V(\mathcal{M})|=1$.

Assume that $\{\omega_1,\omega_2,\omega_3\}\cap V(\mathcal{M})=\{\omega_1\}$. Then $\omega_2$ and $\omega_3$ must belong to two distinct triangles in  $\mathcal{T}_1$. To see why, suppose for contradiction that $\omega_2$ and $\omega_3$ lie in the same triangle, say $\omega\omega_2\omega_3 \in \mathcal{T}_1$. Then there is  an $\mathcal M$-edge $uv\in \mathcal{M}\setminus\{\omega_1\}$ sees $ \omega\omega_2\omega_3$. However, $\mathcal T\setminus \{ \omega\omega_2\omega_3\}\cup\{ \omega_1\omega_2\omega_3, \omega uv\}$ (see Figure~\ref{fig:cap=1}) is a copy of $(t+2)K_3$ of $H$.

Suppose that $\omega_2$ and $\omega_3$ come from $ x_2y_2\omega_2$ and $ x_3y_3\omega_3$ in $\mathcal{T}_1$. If there exist two $\mathcal M$-edges $u_2v_2$ and $u_3v_3$ in $\mathcal{M}\setminus\{\omega_1\}$ such that $u_iv_i$ sees $ x_iy_i\omega_i$ for $i=2,3$, then  $\mathcal T=\mathcal T\setminus\{ x_2y_2\omega_2, x_3y_3\omega_3\}\cup\{u_2v_2x_2,u_3v_3x_3,\omega_1\omega_2\omega_3\}$ (see Figure~\ref{fig:cap=1}) is a copy of $(t + 2)K_3$ in $H$, a contradiction. Thus, there are exactly two $\mathcal{M}$-edges, say $uv, \omega_1\omega$, that see both $ x_2y_2\omega_2$ and $ x_3y_3\omega_3$. By \eqref{EQ:bound-edge-triangle-edge-T1-M}, we have $e(x_iy_i\omega_i, \mathcal{M})\le 3(m-2)+2\times 4=4m-(m-2)$ for $i\in\{2,3\}$, and then 
\begin{align}\label{EQ:edge-T1-M-w1-cap-M=1}
    e(\mathcal{T}_1,\mathcal{M})\leq 4mt_1-2(m-2). 
\end{align}

We can update the number of edges in $\mathcal{T}_1$. For each $xyz\in\mathcal{T}_1\setminus\{ x_2y_2\omega_2, x_3y_3\omega_3\}$, we claim that $e(x_i, xyz)\le 2$ for $i\in\{2,3\}$. Suppose to the contrary that $e(x_2, xyz)=3$. By Claim \ref{CLM:at-most-two-triangle}, there exists $u'v'\in\mathcal{M}\setminus\{uv,\omega_1\omega\}$ that sees $ xyz$. Therefore, if $yz$ sees $ x_2y_2\omega_2$, then $yz$ must see $x_2$ of $ x_2y_2w_2$ by Claim~\ref{CLM:structure-edge-between-T1} and we can find a $(t + 2)K_3$ in $H$ (see Figure~\ref{fig:cap=1}):
\begin{align*}
    \mathcal T\setminus\{ xyz, x_2y_2\omega_2, x_3y_3\omega_3\}\cup\{ x_2yz, x_3uv, xu'v',\omega_1\omega_2\omega_3\}, 
\end{align*}
a contradiction. Combining Claim \ref{CLM:structure-edge-between-T1} and $e(x_i, xyz)\le 2$,  we conclude that  $e(xyz, x_iy_iz_i)\le 6$ for $i=2,3$. This together with \eqref{EQ:edge-between-triangles-T1} shows 
\begin{align}\label{EQ:edge-T1-w1-cap-M=1}
    e(\mathcal{T}_1)\leq7\binom{t_1}{2}+3t_1-2(t_1-2). 
\end{align}

\begin{figure}[H]
    \centering

\tikzset{every picture/.style={line width=0.75pt}} 

\begin{tikzpicture}[x=0.75pt,y=0.75pt,yscale=-0.89,xscale=0.89]

\draw [color={rgb, 255:red, 136; green, 4; blue, 22 }  ,draw opacity=1 ][line width=1.5]  [dash pattern={on 1.69pt off 2.76pt}]  (218.51,110.44) -- (276.31,110.77) ;
\draw [color={rgb, 255:red, 136; green, 4; blue, 22 }  ,draw opacity=1 ][line width=1.5]  [dash pattern={on 1.69pt off 2.76pt}]  (518.51,110.44) -- (564.31,110.77) ;
\draw [color={rgb, 255:red, 136; green, 4; blue, 22 }  ,draw opacity=1 ][line width=1.5]  [dash pattern={on 1.69pt off 2.76pt}]  (486.79,84.17) -- (457.84,29.14) ;
\draw [color={rgb, 255:red, 136; green, 4; blue, 22 }  ,draw opacity=1 ][line width=1.5]  [dash pattern={on 1.69pt off 2.76pt}]  (457.51,110.44) -- (486.79,84.17) ;
\draw [color={rgb, 255:red, 136; green, 4; blue, 22 }  ,draw opacity=1 ][line width=1.5]  [dash pattern={on 1.69pt off 2.76pt}]  (564.31,110.77) -- (540.73,46.02) ;
\draw [color={rgb, 255:red, 136; green, 4; blue, 22 }  ,draw opacity=1 ][line width=1.5]  [dash pattern={on 1.69pt off 2.76pt}]  (307.99,84.5) -- (337.73,109.18) ;
\draw [color={rgb, 255:red, 0; green, 0; blue, 0 }  ,draw opacity=1 ][line width=1.5]    (115.83,32.68) -- (115.39,82.68) ;
\draw [color={rgb, 255:red, 136; green, 4; blue, 22 }  ,draw opacity=1 ][line width=1.5]  [dash pattern={on 1.69pt off 2.76pt}]  (22.72,60.17) -- (52.46,84.84) ;
\draw [color={rgb, 255:red, 136; green, 4; blue, 22 }  ,draw opacity=1 ][line width=1.5]    (22.72,60.17) -- (22.72,110.17) ;
\draw [color={rgb, 255:red, 136; green, 4; blue, 22 }  ,draw opacity=1 ][line width=1.5]  [dash pattern={on 1.69pt off 2.76pt}]  (22.72,110.17) -- (52.46,84.84) ;
\draw  [fill={rgb, 255:red, 0; green, 0; blue, 0 }  ,fill opacity=1 ] (20.99,62.49) .. controls (22.28,63.45) and (24.1,63.18) .. (25.05,61.89) .. controls (26.01,60.61) and (25.74,58.79) .. (24.45,57.84) .. controls (23.16,56.88) and (21.35,57.15) .. (20.39,58.44) .. controls (19.43,59.72) and (19.7,61.54) .. (20.99,62.49) -- cycle ;
\draw  [fill={rgb, 255:red, 0; green, 0; blue, 0 }  ,fill opacity=1 ] (20.99,112.49) .. controls (22.28,113.45) and (24.1,113.18) .. (25.05,111.89) .. controls (26.01,110.61) and (25.74,108.79) .. (24.45,107.84) .. controls (23.16,106.88) and (21.35,107.15) .. (20.39,108.44) .. controls (19.43,109.72) and (19.7,111.54) .. (20.99,112.49) -- cycle ;
\draw [color={rgb, 255:red, 136; green, 4; blue, 22 }  ,draw opacity=1 ][line width=1.5]  [dash pattern={on 1.69pt off 2.76pt}]  (84.18,111.11) -- (115.39,82.68) ;
\draw [color={rgb, 255:red, 136; green, 4; blue, 22 }  ,draw opacity=1 ][line width=1.5]  [dash pattern={on 1.69pt off 2.76pt}]  (84.29,29.84) -- (115.39,82.68) ;
\draw [color={rgb, 255:red, 0; green, 0; blue, 0 }  ,draw opacity=1 ][line width=1.5]    (84.18,111.11) -- (52.46,85.84) ;
\draw [color={rgb, 255:red, 0; green, 0; blue, 0 }  ,draw opacity=1 ][line width=1.5]    (84.51,29.8) -- (52.46,85.84) ;
\draw [color={rgb, 255:red, 136; green, 4; blue, 22 }  ,draw opacity=1 ][line width=1.5]    (84.18,111.11) -- (84.51,29.8) ;
\draw  [fill={rgb, 255:red, 0; green, 0; blue, 0 }  ,fill opacity=1 ] (87.16,29.45) .. controls (87.38,31.03) and (86.27,32.5) .. (84.68,32.71) .. controls (83.1,32.93) and (81.64,31.82) .. (81.42,30.23) .. controls (81.2,28.65) and (82.31,27.18) .. (83.9,26.97) .. controls (85.48,26.75) and (86.95,27.86) .. (87.16,29.45) -- cycle ;
\draw  [fill={rgb, 255:red, 0; green, 0; blue, 0 }  ,fill opacity=1 ] (55.33,84.45) .. controls (55.55,86.03) and (54.44,87.49) .. (52.85,87.71) .. controls (51.27,87.93) and (49.81,86.82) .. (49.59,85.23) .. controls (49.37,83.64) and (50.48,82.18) .. (52.07,81.97) .. controls (53.65,81.75) and (55.11,82.86) .. (55.33,84.45) -- cycle ;
\draw  [fill={rgb, 255:red, 0; green, 0; blue, 0 }  ,fill opacity=1 ] (112.99,32.29) .. controls (112.77,33.88) and (113.87,35.34) .. (115.44,35.56) .. controls (117.01,35.77) and (118.46,34.66) .. (118.68,33.08) .. controls (118.89,31.49) and (117.79,30.03) .. (116.22,29.81) .. controls (114.65,29.59) and (113.2,30.71) .. (112.99,32.29) -- cycle ;
\draw  [fill={rgb, 255:red, 0; green, 0; blue, 0 }  ,fill opacity=1 ] (117.12,85.01) .. controls (115.83,85.97) and (114.02,85.7) .. (113.07,84.41) .. controls (112.11,83.13) and (112.38,81.31) .. (113.67,80.36) .. controls (114.95,79.4) and (116.77,79.67) .. (117.72,80.96) .. controls (118.68,82.24) and (118.41,84.06) .. (117.12,85.01) -- cycle ;
\draw  [fill={rgb, 255:red, 0; green, 0; blue, 0 }  ,fill opacity=1 ] (85.9,113.43) .. controls (84.62,114.39) and (82.8,114.12) .. (81.85,112.83) .. controls (80.89,111.55) and (81.16,109.73) .. (82.45,108.78) .. controls (83.74,107.82) and (85.55,108.09) .. (86.51,109.38) .. controls (87.46,110.66) and (87.19,112.48) .. (85.9,113.43) -- cycle ;
\draw [color={rgb, 255:red, 136; green, 4; blue, 22 }  ,draw opacity=1 ][line width=1.5]  [dash pattern={on 1.69pt off 2.76pt}]  (307.99,84.5) -- (338.06,59.17) ;
\draw [color={rgb, 255:red, 0; green, 0; blue, 0 }  ,draw opacity=1 ][line width=1.5]    (248.17,32.02) -- (247.73,82.02) ;
\draw [color={rgb, 255:red, 136; green, 4; blue, 22 }  ,draw opacity=1 ][line width=1.5]  [dash pattern={on 1.69pt off 2.76pt}]  (157.06,59.5) -- (186.79,84.17) ;
\draw [color={rgb, 255:red, 136; green, 4; blue, 22 }  ,draw opacity=1 ][line width=1.5]    (157.06,59.5) -- (157.06,109.5) ;
\draw [color={rgb, 255:red, 136; green, 4; blue, 22 }  ,draw opacity=1 ][line width=1.5]  [dash pattern={on 1.69pt off 2.76pt}]  (157.06,109.5) -- (186.79,84.17) ;
\draw  [fill={rgb, 255:red, 0; green, 0; blue, 0 }  ,fill opacity=1 ] (155.32,61.83) .. controls (156.61,62.78) and (158.43,62.51) .. (159.39,61.23) .. controls (160.34,59.94) and (160.07,58.13) .. (158.79,57.17) .. controls (157.5,56.22) and (155.68,56.49) .. (154.72,57.77) .. controls (153.77,59.06) and (154.04,60.87) .. (155.32,61.83) -- cycle ;
\draw  [fill={rgb, 255:red, 0; green, 0; blue, 0 }  ,fill opacity=1 ] (155.32,111.83) .. controls (156.61,112.78) and (158.43,112.51) .. (159.39,111.23) .. controls (160.34,109.94) and (160.07,108.13) .. (158.79,107.17) .. controls (157.5,106.22) and (155.68,106.49) .. (154.72,107.77) .. controls (153.77,109.06) and (154.04,110.87) .. (155.32,111.83) -- cycle ;
\draw [color={rgb, 255:red, 136; green, 4; blue, 22 }  ,draw opacity=1 ][line width=1.5]  [dash pattern={on 1.69pt off 2.76pt}]  (218.51,110.44) -- (247.73,82.02) ;
\draw [color={rgb, 255:red, 136; green, 4; blue, 22 }  ,draw opacity=1 ][line width=1.5]  [dash pattern={on 1.69pt off 2.76pt}]  (276.31,110.77) -- (247.73,82.02) ;
\draw [color={rgb, 255:red, 0; green, 0; blue, 0 }  ,draw opacity=1 ][line width=1.5]    (218.51,110.44) -- (186.79,85.17) ;
\draw [color={rgb, 255:red, 0; green, 0; blue, 0 }  ,draw opacity=1 ][line width=1.5]    (218.84,29.14) -- (186.79,85.17) ;
\draw [color={rgb, 255:red, 0; green, 0; blue, 0 }  ,draw opacity=1 ][line width=1.5]    (218.51,110.44) -- (218.84,29.14) ;
\draw  [fill={rgb, 255:red, 0; green, 0; blue, 0 }  ,fill opacity=1 ] (221.5,28.78) .. controls (221.71,30.37) and (220.6,31.83) .. (219.02,32.05) .. controls (217.43,32.26) and (215.97,31.15) .. (215.75,29.57) .. controls (215.54,27.98) and (216.65,26.52) .. (218.23,26.3) .. controls (219.82,26.08) and (221.28,27.19) .. (221.5,28.78) -- cycle ;
\draw  [fill={rgb, 255:red, 0; green, 0; blue, 0 }  ,fill opacity=1 ] (189.66,83.78) .. controls (189.88,85.37) and (188.77,86.83) .. (187.19,87.04) .. controls (185.6,87.26) and (184.14,86.15) .. (183.92,84.56) .. controls (183.7,82.98) and (184.81,81.52) .. (186.4,81.3) .. controls (187.99,81.08) and (189.45,82.19) .. (189.66,83.78) -- cycle ;
\draw  [fill={rgb, 255:red, 0; green, 0; blue, 0 }  ,fill opacity=1 ] (245.32,31.63) .. controls (245.11,33.21) and (246.21,34.67) .. (247.78,34.89) .. controls (249.35,35.11) and (250.8,34) .. (251.01,32.41) .. controls (251.23,30.82) and (250.13,29.36) .. (248.56,29.15) .. controls (246.99,28.93) and (245.54,30.04) .. (245.32,31.63) -- cycle ;
\draw  [fill={rgb, 255:red, 0; green, 0; blue, 0 }  ,fill opacity=1 ] (249.45,84.34) .. controls (248.17,85.3) and (246.35,85.03) .. (245.4,83.75) .. controls (244.44,82.46) and (244.71,80.65) .. (246,79.69) .. controls (247.29,78.74) and (249.1,79) .. (250.06,80.29) .. controls (251.01,81.57) and (250.74,83.39) .. (249.45,84.34) -- cycle ;
\draw  [fill={rgb, 255:red, 0; green, 0; blue, 0 }  ,fill opacity=1 ] (220.24,112.77) .. controls (218.95,113.72) and (217.14,113.45) .. (216.18,112.17) .. controls (215.23,110.88) and (215.5,109.07) .. (216.78,108.11) .. controls (218.07,107.16) and (219.88,107.43) .. (220.84,108.71) .. controls (221.79,110) and (221.52,111.81) .. (220.24,112.77) -- cycle ;
\draw [color={rgb, 255:red, 0; green, 0; blue, 0 }  ,draw opacity=1 ][line width=1.5]    (276.31,110.77) -- (307.99,85.5) ;
\draw [color={rgb, 255:red, 0; green, 0; blue, 0 }  ,draw opacity=1 ][line width=1.5]    (275.98,29.47) -- (307.99,85.5) ;
\draw [color={rgb, 255:red, 0; green, 0; blue, 0 }  ,draw opacity=1 ][line width=1.5]    (276.31,110.77) -- (275.98,29.47) ;
\draw  [fill={rgb, 255:red, 0; green, 0; blue, 0 }  ,fill opacity=1 ] (305.13,84.11) .. controls (304.91,85.7) and (306.02,87.16) .. (307.6,87.38) .. controls (309.18,87.6) and (310.64,86.48) .. (310.86,84.9) .. controls (311.08,83.31) and (309.97,81.85) .. (308.39,81.63) .. controls (306.8,81.41) and (305.34,82.53) .. (305.13,84.11) -- cycle ;
\draw  [fill={rgb, 255:red, 0; green, 0; blue, 0 }  ,fill opacity=1 ] (274.58,113.1) .. controls (275.87,114.05) and (277.68,113.79) .. (278.63,112.5) .. controls (279.59,111.22) and (279.32,109.4) .. (278.03,108.45) .. controls (276.75,107.49) and (274.93,107.76) .. (273.98,109.04) .. controls (273.03,110.33) and (273.3,112.14) .. (274.58,113.1) -- cycle ;
\draw  [fill={rgb, 255:red, 0; green, 0; blue, 0 }  ,fill opacity=1 ] (278.85,29.08) .. controls (279.06,30.66) and (277.95,32.13) .. (276.37,32.34) .. controls (274.78,32.56) and (273.32,31.45) .. (273.1,29.86) .. controls (272.89,28.28) and (274,26.81) .. (275.58,26.6) .. controls (277.17,26.38) and (278.63,27.49) .. (278.85,29.08) -- cycle ;
\draw [color={rgb, 255:red, 136; green, 4; blue, 22 }  ,draw opacity=1 ][line width=1.5]    (338.06,59.17) -- (338.06,109.17) ;
\draw  [fill={rgb, 255:red, 0; green, 0; blue, 0 }  ,fill opacity=1 ] (336.32,61.49) .. controls (337.61,62.45) and (339.43,62.18) .. (340.39,60.89) .. controls (341.34,59.61) and (341.07,57.79) .. (339.79,56.84) .. controls (338.5,55.88) and (336.68,56.15) .. (335.72,57.44) .. controls (334.77,58.72) and (335.04,60.54) .. (336.32,61.49) -- cycle ;
\draw  [fill={rgb, 255:red, 0; green, 0; blue, 0 }  ,fill opacity=1 ] (336.32,111.49) .. controls (337.61,112.45) and (339.43,112.18) .. (340.39,110.89) .. controls (341.34,109.61) and (341.07,107.79) .. (339.79,106.84) .. controls (338.5,105.88) and (336.68,106.15) .. (335.72,107.44) .. controls (334.77,108.72) and (335.04,110.54) .. (336.32,111.49) -- cycle ;
\draw [color={rgb, 255:red, 136; green, 4; blue, 22 }  ,draw opacity=1 ][line width=1.5]  [dash pattern={on 1.69pt off 2.76pt}]  (595.99,84.5) -- (625.73,109.18) ;
\draw [color={rgb, 255:red, 136; green, 4; blue, 22 }  ,draw opacity=1 ][line width=1.5]  [dash pattern={on 1.69pt off 2.76pt}]  (595.99,84.5) -- (626.06,59.17) ;
\draw [color={rgb, 255:red, 136; green, 4; blue, 22 }  ,draw opacity=1 ][line width=1.5]  [dash pattern={on 1.69pt off 2.76pt}]  (518.51,110.44) -- (540.73,46.02) ;
\draw [color={rgb, 255:red, 0; green, 0; blue, 0 }  ,draw opacity=1 ][line width=1.5]    (518.51,110.44) -- (486.79,85.17) ;
\draw [color={rgb, 255:red, 0; green, 0; blue, 0 }  ,draw opacity=1 ][line width=1.5]    (518.84,29.14) -- (486.79,85.17) ;
\draw [color={rgb, 255:red, 0; green, 0; blue, 0 }  ,draw opacity=1 ][line width=1.5]    (518.51,110.44) -- (518.84,29.14) ;
\draw  [fill={rgb, 255:red, 0; green, 0; blue, 0 }  ,fill opacity=1 ] (521.5,28.78) .. controls (521.71,30.37) and (520.6,31.83) .. (519.02,32.05) .. controls (517.43,32.26) and (515.97,31.15) .. (515.75,29.57) .. controls (515.54,27.98) and (516.65,26.52) .. (518.23,26.3) .. controls (519.82,26.08) and (521.28,27.19) .. (521.5,28.78) -- cycle ;
\draw  [fill={rgb, 255:red, 0; green, 0; blue, 0 }  ,fill opacity=1 ] (489.66,83.78) .. controls (489.88,85.37) and (488.77,86.83) .. (487.19,87.04) .. controls (485.6,87.26) and (484.14,86.15) .. (483.92,84.56) .. controls (483.7,82.98) and (484.81,81.52) .. (486.4,81.3) .. controls (487.99,81.08) and (489.45,82.19) .. (489.66,83.78) -- cycle ;
\draw  [fill={rgb, 255:red, 0; green, 0; blue, 0 }  ,fill opacity=1 ] (542.45,48.34) .. controls (541.17,49.3) and (539.35,49.03) .. (538.4,47.75) .. controls (537.44,46.46) and (537.71,44.65) .. (539,43.69) .. controls (540.29,42.74) and (542.1,43) .. (543.06,44.29) .. controls (544.01,45.57) and (543.74,47.39) .. (542.45,48.34) -- cycle ;
\draw  [fill={rgb, 255:red, 0; green, 0; blue, 0 }  ,fill opacity=1 ] (520.24,112.77) .. controls (518.95,113.72) and (517.14,113.45) .. (516.18,112.17) .. controls (515.23,110.88) and (515.5,109.07) .. (516.78,108.11) .. controls (518.07,107.16) and (519.88,107.43) .. (520.84,108.71) .. controls (521.79,110) and (521.52,111.81) .. (520.24,112.77) -- cycle ;
\draw [color={rgb, 255:red, 0; green, 0; blue, 0 }  ,draw opacity=1 ][line width=1.5]    (564.31,110.77) -- (595.99,85.5) ;
\draw [color={rgb, 255:red, 0; green, 0; blue, 0 }  ,draw opacity=1 ][line width=1.5]    (563.98,29.47) -- (595.99,85.5) ;
\draw [color={rgb, 255:red, 0; green, 0; blue, 0 }  ,draw opacity=1 ][line width=1.5]    (564.31,110.77) -- (563.98,29.47) ;
\draw  [fill={rgb, 255:red, 0; green, 0; blue, 0 }  ,fill opacity=1 ] (593.13,84.11) .. controls (592.91,85.7) and (594.02,87.16) .. (595.6,87.38) .. controls (597.18,87.6) and (598.64,86.48) .. (598.86,84.9) .. controls (599.08,83.31) and (597.97,81.85) .. (596.39,81.63) .. controls (594.8,81.41) and (593.34,82.53) .. (593.13,84.11) -- cycle ;
\draw  [fill={rgb, 255:red, 0; green, 0; blue, 0 }  ,fill opacity=1 ] (562.58,113.1) .. controls (563.87,114.05) and (565.68,113.79) .. (566.63,112.5) .. controls (567.59,111.22) and (567.32,109.4) .. (566.03,108.45) .. controls (564.75,107.49) and (562.93,107.76) .. (561.98,109.04) .. controls (561.03,110.33) and (561.3,112.14) .. (562.58,113.1) -- cycle ;
\draw  [fill={rgb, 255:red, 0; green, 0; blue, 0 }  ,fill opacity=1 ] (566.85,29.08) .. controls (567.06,30.66) and (565.95,32.13) .. (564.37,32.34) .. controls (562.78,32.56) and (561.32,31.45) .. (561.1,29.86) .. controls (560.89,28.28) and (562,26.81) .. (563.58,26.6) .. controls (565.17,26.38) and (566.63,27.49) .. (566.85,29.08) -- cycle ;
\draw [color={rgb, 255:red, 136; green, 4; blue, 22 }  ,draw opacity=1 ][line width=1.5]    (626.06,59.17) -- (626.06,109.17) ;
\draw  [fill={rgb, 255:red, 0; green, 0; blue, 0 }  ,fill opacity=1 ] (624.32,61.49) .. controls (625.61,62.45) and (627.43,62.18) .. (628.39,60.89) .. controls (629.34,59.61) and (629.07,57.79) .. (627.79,56.84) .. controls (626.5,55.88) and (624.68,56.15) .. (623.72,57.44) .. controls (622.77,58.72) and (623.04,60.54) .. (624.32,61.49) -- cycle ;
\draw  [fill={rgb, 255:red, 0; green, 0; blue, 0 }  ,fill opacity=1 ] (624.32,111.49) .. controls (625.61,112.45) and (627.43,112.18) .. (628.39,110.89) .. controls (629.34,109.61) and (629.07,107.79) .. (627.79,106.84) .. controls (626.5,105.88) and (624.68,106.15) .. (623.72,107.44) .. controls (622.77,108.72) and (623.04,110.54) .. (624.32,111.49) -- cycle ;
\draw [color={rgb, 255:red, 136; green, 4; blue, 22 }  ,draw opacity=1 ][line width=1.5]  [dash pattern={on 1.69pt off 2.76pt}]  (396.06,59.5) -- (425.79,84.17) ;
\draw [color={rgb, 255:red, 136; green, 4; blue, 22 }  ,draw opacity=1 ][line width=1.5]    (396.06,59.5) -- (396.06,109.5) ;
\draw [color={rgb, 255:red, 136; green, 4; blue, 22 }  ,draw opacity=1 ][line width=1.5]  [dash pattern={on 1.69pt off 2.76pt}]  (396.06,109.5) -- (425.79,84.17) ;
\draw  [fill={rgb, 255:red, 0; green, 0; blue, 0 }  ,fill opacity=1 ] (394.32,61.83) .. controls (395.61,62.78) and (397.43,62.51) .. (398.39,61.23) .. controls (399.34,59.94) and (399.07,58.13) .. (397.79,57.17) .. controls (396.5,56.22) and (394.68,56.49) .. (393.72,57.77) .. controls (392.77,59.06) and (393.04,60.87) .. (394.32,61.83) -- cycle ;
\draw  [fill={rgb, 255:red, 0; green, 0; blue, 0 }  ,fill opacity=1 ] (394.32,111.83) .. controls (395.61,112.78) and (397.43,112.51) .. (398.39,111.23) .. controls (399.34,109.94) and (399.07,108.13) .. (397.79,107.17) .. controls (396.5,106.22) and (394.68,106.49) .. (393.72,107.77) .. controls (392.77,109.06) and (393.04,110.87) .. (394.32,111.83) -- cycle ;
\draw [color={rgb, 255:red, 0; green, 0; blue, 0 }  ,draw opacity=1 ][line width=1.5]    (457.51,110.44) -- (425.79,85.17) ;
\draw [color={rgb, 255:red, 0; green, 0; blue, 0 }  ,draw opacity=1 ][line width=1.5]    (457.84,29.14) -- (425.79,85.17) ;
\draw [color={rgb, 255:red, 136; green, 4; blue, 22 }  ,draw opacity=1 ][line width=1.5]    (457.51,110.44) -- (457.84,29.14) ;
\draw  [fill={rgb, 255:red, 0; green, 0; blue, 0 }  ,fill opacity=1 ] (460.5,28.78) .. controls (460.71,30.37) and (459.6,31.83) .. (458.02,32.05) .. controls (456.43,32.26) and (454.97,31.15) .. (454.75,29.57) .. controls (454.54,27.98) and (455.65,26.52) .. (457.23,26.3) .. controls (458.82,26.08) and (460.28,27.19) .. (460.5,28.78) -- cycle ;
\draw  [fill={rgb, 255:red, 0; green, 0; blue, 0 }  ,fill opacity=1 ] (428.66,83.78) .. controls (428.88,85.37) and (427.77,86.83) .. (426.19,87.04) .. controls (424.6,87.26) and (423.14,86.15) .. (422.92,84.56) .. controls (422.7,82.98) and (423.81,81.52) .. (425.4,81.3) .. controls (426.99,81.08) and (428.45,82.19) .. (428.66,83.78) -- cycle ;
\draw  [fill={rgb, 255:red, 0; green, 0; blue, 0 }  ,fill opacity=1 ] (459.24,112.77) .. controls (457.95,113.72) and (456.14,113.45) .. (455.18,112.17) .. controls (454.23,110.88) and (454.5,109.07) .. (455.78,108.11) .. controls (457.07,107.16) and (458.88,107.43) .. (459.84,108.71) .. controls (460.79,110) and (460.52,111.81) .. (459.24,112.77) -- cycle ;

\draw (8,46.4) node [anchor=north west][inner sep=0.75pt]    {$u$};
\draw (7.43,103.4) node [anchor=north west][inner sep=0.75pt]    {$v$};
\draw (74.77,115) node [anchor=north west][inner sep=0.75pt]    {$\omega _{2}$};
\draw (111.33,86.4) node [anchor=north west][inner sep=0.75pt]    {$\omega _{1}$};
\draw (61,17) node [anchor=north west][inner sep=0.75pt]    {$\omega _{3}$};
\draw (46.1,92) node [anchor=north west][inner sep=0.75pt]    {$\omega $};
\draw (160,48) node [anchor=north west][inner sep=0.75pt]    {$u_{2}$};
\draw (160,109) node [anchor=north west][inner sep=0.75pt]    {$v_{2}$};
\draw (210.1,115) node [anchor=north west][inner sep=0.75pt]    {$\omega _{2}$};
\draw (532.33,31) node [anchor=north west][inner sep=0.75pt]    {$\omega _{1}$};
\draw (197,17) node [anchor=north west][inner sep=0.75pt]    {$y_{2}$};
\draw (178,93) node [anchor=north west][inner sep=0.75pt]    {$x_{2}$};
\draw (420.1,93) node [anchor=north west][inner sep=0.75pt]    {$x$};
\draw (508.1,115) node [anchor=north west][inner sep=0.75pt]    {$\omega _{2}$};
\draw (273,115) node [anchor=north west][inner sep=0.75pt]    {$\omega _{3}$};
\draw (587,93) node [anchor=north west][inner sep=0.75pt]    {$x_{3}$};
\draw (380.33,45) node [anchor=north west][inner sep=0.75pt]    {$u'$};
\draw (376.77,102.73) node [anchor=north west][inner sep=0.75pt]    {$v'$};
\draw (496.1,17) node [anchor=north west][inner sep=0.75pt]    {$y_{2}$};
\draw (556.1,115) node [anchor=north west][inner sep=0.75pt]    {$\omega _{3}$};
\draw (443.43,17) node [anchor=north west][inner sep=0.75pt]    {$y$};
\draw (249.95,69) node [anchor=north west][inner sep=0.75pt]    {$\omega _{1}$};
\draw (300,63) node [anchor=north west][inner sep=0.75pt]    {$x_{3}$};
\draw (340,46.73) node [anchor=north west][inner sep=0.75pt]    {$u_{3}$};
\draw (340,109) node [anchor=north west][inner sep=0.75pt]    {$v_{3}$};
\draw (442.65,115) node [anchor=north west][inner sep=0.75pt]    {$z$};
\draw (478,93) node [anchor=north west][inner sep=0.75pt]    {$x_{2}$};
\draw (568,17) node [anchor=north west][inner sep=0.75pt]    {$y_{3}$};
\draw (170,137) node [anchor=north west][inner sep=0.75pt]   [align=left] {Illustration of \eqref{EQ:edge-T1-M-w1-cap-M=1}};
\draw (450,137) node [anchor=north west][inner sep=0.75pt]   [align=left] {Illustration of \eqref{EQ:edge-T1-w1-cap-M=1}};
\draw (630,46.73) node [anchor=north west][inner sep=0.75pt]    {$u$};
\draw (630,103) node [anchor=north west][inner sep=0.75pt]    {$v$};

\end{tikzpicture}

    \caption{the case $|\{\omega_1,\omega_2,\omega_3\}\cap V(\mathcal{M})|=1$.}
    \label{fig:cap=1}
\end{figure}

Using \eqref{EQ:edge-T1-M-w1-cap-M=1} and \eqref{EQ:edge-T1-w1-cap-M=1}, we update the terms involving  $e(\mathcal{T}_1)$ and $e(\mathcal{T}_1,\mathcal{M})$ in $h$, to obtain 
\begin{align*}
    |H |&\le f+im+m^2+(3+3m)t_4+(2+i)t_4+\binom{3t_4}{2}-2(m-2)-2(t_1-2)\\&\le e_1(n,t)+\frac{1}{2} ( n-t)-2m-2t_1+O(1)<N,
\end{align*}
a contradiction. The second inequality follows from Fact \ref{fact1}.

{\bf Case 3.} $|\{\omega_1,\omega_2,\omega_3\}\cap V(\mathcal{M})|=2$. 

Assume that $\{\omega_1,\omega_2,\omega_3\}\cap V(\mathcal{M})=\{\omega_1,\omega_2\}$ and $ xy\omega_3$ is an element in $\mathcal T_1$. Let  $\omega_1v_1,\omega_2v_2$ be two edges in $\mathcal M$. Frist, we bound $e(\mathcal M)$. It is straightforward to verify that among all matching edges in $\mathcal M$, only $\omega_1v_1$ and $\omega_2v_2$  see  $ xy\omega_3$, and $\omega_1\omega_2$ is the only edge connecting $\omega_1v_1$ and $\omega_2v_2$. Otherwise, if there are two edges between $\omega_1v_1$ and $\omega_2v_2$, they must be $\omega_1\omega_2$ and $v_1v_2$. Then $\mathcal{T}\setminus\{xy\omega_3\}\cup\{\omega_1\omega_2\omega_3, xv_1v_2\}$ (see Figure~\ref{FIG:cap=2}) forms a $(t+2)K_3$ in $H$. On the other hand, for $e_1, e_2\in \mathcal M$, it follows from that $H[\mathcal M]$ is triangle-free that $e(e_1,e_2)\le 2$. Thus, 
\begin{align}\label{edges-M-m2-1-w1-cap-M=2}
    e(\mathcal M)\le 2\binom{m}{2}+m-1=m^2-1. 
\end{align}

We then consider the number of edges between $\mathcal M$ and $xy\omega_3$. For other edges $uv\in\mathcal M\setminus\{\omega_1v_1,\omega_2v_2\}$,  since $uv$ does not see $xy\omega_3$,  we have $e(uv, xy\omega_3)\le 3$. We still have $e(\omega_iv_i, xy\omega_3)\le 3$, otherwise, $v_iy\in H$ and then $\mathcal T\setminus\{xy\omega_3\}\cup\{xyv_i,\omega_1\omega_2\omega_3\}$ (see Figure~\ref{FIG:cap=2}) is a copy of $(t + 2)K_3$ in $H$. Thus, $e(xy\omega_3, \mathcal M)\le 3t_1$, which together with \eqref{EQ:bound-edge-triangle-edge-T1-M} shows 
\begin{align}\label{edges-T1-M-w1-cap-M=2}
    e(\mathcal T_1,\mathcal M)=4mt_1-m.
\end{align}

At last, we bound $e(\mathcal T_1)$. For any $ x'y'z'\in \mathcal T_1\setminus\{ xy\omega_3\}$ that can be seen by an edge $uv\in\mathcal M\setminus\{\omega_1v_1,\omega_2v_2\}$, we have $e(x'y'z', xy\omega_3)\le 6$, otherwise, $xy',xz'\in E(H)$ and so 
\begin{align*}
    \left(\mathcal T\setminus\{ x'y'z', xy\omega_3\} \right) \cup \{ x'uv,\omega_1\omega_2\omega_3, xy'z' \}
\end{align*}
is a copy of $(t+2)K_3$ in $H$ (see Figure~\ref{FIG:cap=2}). By Claim \ref{CLM:at-most-two-triangle}, the number of choice $x'y'z'$ is at least $t_1-2$. This along with \eqref{EQ:edge-between-triangles-T1} gives 
\begin{align}\label{edges-T1-w1-cap-M=2}
e(\mathcal T_1)\le 7\binom{t_1}{2}+3t_1-(t_1-2)
\end{align}

\begin{figure}[H]
    \centering
    
\tikzset{every picture/.style={line width=0.75pt}} 

\begin{tikzpicture}[x=0.75pt,y=0.75pt,yscale=-0.98,xscale=0.98]

\draw [color={rgb, 255:red, 136; green, 4; blue, 22 }  ,draw opacity=1 ][line width=1.5]  [dash pattern={on 1.69pt off 2.76pt}]  (459.51,110.44) -- (501.73,75.58) ;
\draw [color={rgb, 255:red, 136; green, 4; blue, 22 }  ,draw opacity=1 ][line width=1.5]  [dash pattern={on 1.69pt off 2.76pt}]  (427.79,84.17) -- (398.84,29.14) ;
\draw [color={rgb, 255:red, 136; green, 4; blue, 22 }  ,draw opacity=1 ][line width=1.5]  [dash pattern={on 1.69pt off 2.76pt}]  (398.51,110.44) -- (427.79,84.17) ;
\draw [color={rgb, 255:red, 136; green, 4; blue, 22 }  ,draw opacity=1 ][line width=1.5]  [dash pattern={on 1.69pt off 2.76pt}]  (501.73,75.58) -- (490.73,38.58) ;
\draw [color={rgb, 255:red, 0; green, 0; blue, 0 }  ,draw opacity=1 ][line width=1.5]    (49.72,60.17) -- (49.72,110.17) ;
\draw [color={rgb, 255:red, 0; green, 0; blue, 0 }  ,draw opacity=1 ][line width=1.5]  [dash pattern={on 1.69pt off 2.76pt}]  (81.72,60.17) -- (111.46,84.84) ;
\draw [color={rgb, 255:red, 0; green, 0; blue, 0 }  ,draw opacity=1 ][line width=1.5]    (81.72,60.17) -- (81.72,110.17) ;
\draw [color={rgb, 255:red, 136; green, 4; blue, 22 }  ,draw opacity=1 ][line width=1.5]  [dash pattern={on 1.69pt off 2.76pt}]  (81.72,110.17) -- (111.46,84.84) ;
\draw [color={rgb, 255:red, 136; green, 4; blue, 22 }  ,draw opacity=1 ][line width=1.5]  [dash pattern={on 1.69pt off 2.76pt}]  (50.01,110.77) -- (81.72,110.17) ;
\draw [color={rgb, 255:red, 136; green, 4; blue, 22 }  ,draw opacity=1 ][line width=1.5]  [dash pattern={on 1.69pt off 2.76pt}]  (49.47,59.89) -- (81.72,60.17) ;
\draw [color={rgb, 255:red, 0; green, 0; blue, 0 }  ,draw opacity=1 ][line width=1.5]    (143.18,111.11) -- (111.46,85.84) ;
\draw [color={rgb, 255:red, 0; green, 0; blue, 0 }  ,draw opacity=1 ][line width=1.5]    (143.51,29.8) -- (111.46,85.84) ;
\draw [color={rgb, 255:red, 0; green, 0; blue, 0 }  ,draw opacity=1 ][line width=1.5]    (143.18,111.11) -- (143.51,29.8) ;
\draw  [fill={rgb, 255:red, 0; green, 0; blue, 0 }  ,fill opacity=1 ] (144.9,113.43) .. controls (143.62,114.39) and (141.8,114.12) .. (140.85,112.83) .. controls (139.89,111.55) and (140.16,109.73) .. (141.45,108.78) .. controls (142.74,107.82) and (144.55,108.09) .. (145.51,109.38) .. controls (146.46,110.66) and (146.19,112.48) .. (144.9,113.43) -- cycle ;
\draw [color={rgb, 255:red, 136; green, 4; blue, 22 }  ,draw opacity=1 ][line width=1.5]  [dash pattern={on 1.69pt off 2.76pt}]  (459.51,110.44) -- (490.73,38.58) ;
\draw [color={rgb, 255:red, 0; green, 0; blue, 0 }  ,draw opacity=1 ][line width=1.5]    (459.51,110.44) -- (427.79,85.17) ;
\draw [color={rgb, 255:red, 0; green, 0; blue, 0 }  ,draw opacity=1 ][line width=1.5]    (459.84,29.14) -- (427.79,85.17) ;
\draw [color={rgb, 255:red, 0; green, 0; blue, 0 }  ,draw opacity=1 ][line width=1.5]    (459.51,110.44) -- (459.84,29.14) ;
\draw  [fill={rgb, 255:red, 0; green, 0; blue, 0 }  ,fill opacity=1 ] (462.5,28.78) .. controls (462.71,30.37) and (461.6,31.83) .. (460.02,32.05) .. controls (458.43,32.26) and (456.97,31.15) .. (456.75,29.57) .. controls (456.54,27.98) and (457.65,26.52) .. (459.23,26.3) .. controls (460.82,26.08) and (462.28,27.19) .. (462.5,28.78) -- cycle ;
\draw  [fill={rgb, 255:red, 0; green, 0; blue, 0 }  ,fill opacity=1 ] (430.66,83.78) .. controls (430.88,85.37) and (429.77,86.83) .. (428.19,87.04) .. controls (426.6,87.26) and (425.14,86.15) .. (424.92,84.56) .. controls (424.7,82.98) and (425.81,81.52) .. (427.4,81.3) .. controls (428.99,81.08) and (430.45,82.19) .. (430.66,83.78) -- cycle ;
\draw  [fill={rgb, 255:red, 0; green, 0; blue, 0 }  ,fill opacity=1 ] (492.46,40.91) .. controls (491.17,41.86) and (489.36,41.6) .. (488.4,40.31) .. controls (487.45,39.03) and (487.72,37.21) .. (489.01,36.26) .. controls (490.29,35.3) and (492.11,35.57) .. (493.06,36.85) .. controls (494.02,38.14) and (493.75,39.96) .. (492.46,40.91) -- cycle ;
\draw  [fill={rgb, 255:red, 0; green, 0; blue, 0 }  ,fill opacity=1 ] (461.24,112.77) .. controls (459.95,113.72) and (458.14,113.45) .. (457.18,112.17) .. controls (456.23,110.88) and (456.5,109.07) .. (457.78,108.11) .. controls (459.07,107.16) and (460.88,107.43) .. (461.84,108.71) .. controls (462.79,110) and (462.52,111.81) .. (461.24,112.77) -- cycle ;
\draw [color={rgb, 255:red, 0; green, 0; blue, 0 }  ,draw opacity=1 ][line width=1.5]    (490.73,38.58) -- (530.77,38.6) ;
\draw  [fill={rgb, 255:red, 0; green, 0; blue, 0 }  ,fill opacity=1 ] (538.83,75.21) .. controls (538.61,76.79) and (539.72,78.26) .. (541.31,78.47) .. controls (542.89,78.69) and (544.35,77.58) .. (544.57,75.99) .. controls (544.79,74.41) and (543.68,72.94) .. (542.09,72.73) .. controls (540.51,72.51) and (539.05,73.62) .. (538.83,75.21) -- cycle ;
\draw  [fill={rgb, 255:red, 0; green, 0; blue, 0 }  ,fill opacity=1 ] (500.01,77.91) .. controls (501.29,78.87) and (503.11,78.6) .. (504.06,77.31) .. controls (505.01,76.03) and (504.74,74.21) .. (503.46,73.26) .. controls (502.17,72.3) and (500.36,72.57) .. (499.41,73.85) .. controls (498.45,75.14) and (498.72,76.96) .. (500.01,77.91) -- cycle ;
\draw [color={rgb, 255:red, 136; green, 4; blue, 22 }  ,draw opacity=1 ][line width=1.5]  [dash pattern={on 1.69pt off 2.76pt}]  (337.06,59.5) -- (366.79,84.17) ;
\draw [color={rgb, 255:red, 136; green, 4; blue, 22 }  ,draw opacity=1 ][line width=1.5]    (337.06,59.5) -- (337.06,109.5) ;
\draw [color={rgb, 255:red, 136; green, 4; blue, 22 }  ,draw opacity=1 ][line width=1.5]  [dash pattern={on 1.69pt off 2.76pt}]  (337.06,109.5) -- (366.79,84.17) ;
\draw  [fill={rgb, 255:red, 0; green, 0; blue, 0 }  ,fill opacity=1 ] (335.32,61.83) .. controls (336.61,62.78) and (338.43,62.51) .. (339.39,61.23) .. controls (340.34,59.94) and (340.07,58.13) .. (338.79,57.17) .. controls (337.5,56.22) and (335.68,56.49) .. (334.72,57.77) .. controls (333.77,59.06) and (334.04,60.87) .. (335.32,61.83) -- cycle ;
\draw  [fill={rgb, 255:red, 0; green, 0; blue, 0 }  ,fill opacity=1 ] (335.32,111.83) .. controls (336.61,112.78) and (338.43,112.51) .. (339.39,111.23) .. controls (340.34,109.94) and (340.07,108.13) .. (338.79,107.17) .. controls (337.5,106.22) and (335.68,106.49) .. (334.72,107.77) .. controls (333.77,109.06) and (334.04,110.87) .. (335.32,111.83) -- cycle ;
\draw [color={rgb, 255:red, 0; green, 0; blue, 0 }  ,draw opacity=1 ][line width=1.5]    (398.51,110.44) -- (366.79,85.17) ;
\draw [color={rgb, 255:red, 0; green, 0; blue, 0 }  ,draw opacity=1 ][line width=1.5]    (398.84,29.14) -- (366.79,85.17) ;
\draw [color={rgb, 255:red, 136; green, 4; blue, 22 }  ,draw opacity=1 ][line width=1.5]    (398.51,110.44) -- (398.84,29.14) ;
\draw  [fill={rgb, 255:red, 0; green, 0; blue, 0 }  ,fill opacity=1 ] (401.5,28.78) .. controls (401.71,30.37) and (400.6,31.83) .. (399.02,32.05) .. controls (397.43,32.26) and (395.97,31.15) .. (395.75,29.57) .. controls (395.54,27.98) and (396.65,26.52) .. (398.23,26.3) .. controls (399.82,26.08) and (401.28,27.19) .. (401.5,28.78) -- cycle ;
\draw  [fill={rgb, 255:red, 0; green, 0; blue, 0 }  ,fill opacity=1 ] (369.66,83.78) .. controls (369.88,85.37) and (368.77,86.83) .. (367.19,87.04) .. controls (365.6,87.26) and (364.14,86.15) .. (363.92,84.56) .. controls (363.7,82.98) and (364.81,81.52) .. (366.4,81.3) .. controls (367.99,81.08) and (369.45,82.19) .. (369.66,83.78) -- cycle ;
\draw  [fill={rgb, 255:red, 0; green, 0; blue, 0 }  ,fill opacity=1 ] (400.24,112.77) .. controls (398.95,113.72) and (397.14,113.45) .. (396.18,112.17) .. controls (395.23,110.88) and (395.5,109.07) .. (396.78,108.11) .. controls (398.07,107.16) and (399.88,107.43) .. (400.84,108.71) .. controls (401.79,110) and (401.52,111.81) .. (400.24,112.77) -- cycle ;
\draw [color={rgb, 255:red, 136; green, 4; blue, 22 }  ,draw opacity=1 ][line width=1.5]  [dash pattern={on 1.69pt off 2.76pt}]  (143.29,29.84) -- (49.47,59.89) ;
\draw [color={rgb, 255:red, 136; green, 4; blue, 22 }  ,draw opacity=1 ][line width=1.5]  [dash pattern={on 1.69pt off 2.76pt}]  (143.51,29.8) -- (81.72,60.17) ;
\draw [color={rgb, 255:red, 0; green, 0; blue, 0 }  ,draw opacity=1 ][line width=1.5]  [dash pattern={on 1.69pt off 2.76pt}]  (111.46,84.84) -- (49.47,59.89) ;
\draw [color={rgb, 255:red, 136; green, 4; blue, 22 }  ,draw opacity=1 ][line width=1.5]  [dash pattern={on 1.69pt off 2.76pt}]  (225.72,110.5) -- (287.18,111.44) ;
\draw [color={rgb, 255:red, 136; green, 4; blue, 22 }  ,draw opacity=1 ][line width=1.5]  [dash pattern={on 1.69pt off 2.76pt}]  (111.46,84.84) -- (50.01,110.77) ;
\draw  [fill={rgb, 255:red, 0; green, 0; blue, 0 }  ,fill opacity=1 ] (51.45,112.49) .. controls (50.16,113.45) and (48.35,113.18) .. (47.39,111.89) .. controls (46.44,110.61) and (46.71,108.79) .. (47.99,107.84) .. controls (49.28,106.88) and (51.1,107.15) .. (52.05,108.44) .. controls (53,109.72) and (52.74,111.54) .. (51.45,112.49) -- cycle ;
\draw  [fill={rgb, 255:red, 0; green, 0; blue, 0 }  ,fill opacity=1 ] (51.19,62.22) .. controls (49.91,63.18) and (48.09,62.91) .. (47.14,61.62) .. controls (46.18,60.34) and (46.45,58.52) .. (47.74,57.57) .. controls (49.03,56.61) and (50.84,56.88) .. (51.8,58.17) .. controls (52.75,59.45) and (52.48,61.27) .. (51.19,62.22) -- cycle ;
\draw  [fill={rgb, 255:red, 0; green, 0; blue, 0 }  ,fill opacity=1 ] (114.33,84.45) .. controls (114.55,86.03) and (113.44,87.49) .. (111.85,87.71) .. controls (110.27,87.93) and (108.81,86.82) .. (108.59,85.23) .. controls (108.37,83.64) and (109.48,82.18) .. (111.07,81.97) .. controls (112.65,81.75) and (114.11,82.86) .. (114.33,84.45) -- cycle ;
\draw  [fill={rgb, 255:red, 0; green, 0; blue, 0 }  ,fill opacity=1 ] (79.99,62.49) .. controls (81.28,63.45) and (83.1,63.18) .. (84.05,61.89) .. controls (85.01,60.61) and (84.74,58.79) .. (83.45,57.84) .. controls (82.16,56.88) and (80.35,57.15) .. (79.39,58.44) .. controls (78.43,59.72) and (78.7,61.54) .. (79.99,62.49) -- cycle ;
\draw  [fill={rgb, 255:red, 0; green, 0; blue, 0 }  ,fill opacity=1 ] (79.99,112.49) .. controls (81.28,113.45) and (83.1,113.18) .. (84.05,111.89) .. controls (85.01,110.61) and (84.74,108.79) .. (83.45,107.84) .. controls (82.16,106.88) and (80.35,107.15) .. (79.39,108.44) .. controls (78.43,109.72) and (78.7,111.54) .. (79.99,112.49) -- cycle ;
\draw  [fill={rgb, 255:red, 0; green, 0; blue, 0 }  ,fill opacity=1 ] (146.16,29.45) .. controls (146.38,31.03) and (145.27,32.5) .. (143.68,32.71) .. controls (142.1,32.93) and (140.64,31.82) .. (140.42,30.23) .. controls (140.2,28.65) and (141.31,27.18) .. (142.9,26.97) .. controls (144.48,26.75) and (145.95,27.86) .. (146.16,29.45) -- cycle ;
\draw [color={rgb, 255:red, 0; green, 0; blue, 0 }  ,draw opacity=1 ][line width=1.5]    (193.72,60.5) -- (193.72,110.5) ;
\draw [color={rgb, 255:red, 0; green, 0; blue, 0 }  ,draw opacity=1 ][line width=1.5]  [dash pattern={on 1.69pt off 2.76pt}]  (225.72,60.5) -- (255.46,85.17) ;
\draw [color={rgb, 255:red, 0; green, 0; blue, 0 }  ,draw opacity=1 ][line width=1.5]    (225.72,60.5) -- (225.72,110.5) ;
\draw [color={rgb, 255:red, 136; green, 4; blue, 22 }  ,draw opacity=1 ][line width=1.5]  [dash pattern={on 1.69pt off 2.76pt}]  (225.72,110.5) -- (255.46,85.17) ;
\draw [color={rgb, 255:red, 136; green, 4; blue, 22 }  ,draw opacity=1 ][line width=1.5]  [dash pattern={on 1.69pt off 2.76pt}]  (193.47,60.23) -- (225.72,60.5) ;
\draw [color={rgb, 255:red, 136; green, 4; blue, 22 }  ,draw opacity=1 ][line width=1.5]    (287.18,111.44) -- (255.46,86.17) ;
\draw [color={rgb, 255:red, 0; green, 0; blue, 0 }  ,draw opacity=1 ][line width=1.5]    (287.51,30.14) -- (255.46,86.17) ;
\draw [color={rgb, 255:red, 0; green, 0; blue, 0 }  ,draw opacity=1 ][line width=1.5]    (287.18,111.44) -- (287.51,30.14) ;
\draw  [fill={rgb, 255:red, 0; green, 0; blue, 0 }  ,fill opacity=1 ] (288.9,113.77) .. controls (287.62,114.72) and (285.8,114.45) .. (284.85,113.17) .. controls (283.89,111.88) and (284.16,110.07) .. (285.45,109.11) .. controls (286.74,108.16) and (288.55,108.43) .. (289.51,109.71) .. controls (290.46,111) and (290.19,112.81) .. (288.9,113.77) -- cycle ;
\draw [color={rgb, 255:red, 136; green, 4; blue, 22 }  ,draw opacity=1 ][line width=1.5]  [dash pattern={on 1.69pt off 2.76pt}]  (287.29,30.17) -- (193.47,60.23) ;
\draw [color={rgb, 255:red, 136; green, 4; blue, 22 }  ,draw opacity=1 ][line width=1.5]  [dash pattern={on 1.69pt off 2.76pt}]  (287.51,30.14) -- (225.72,60.5) ;
\draw [color={rgb, 255:red, 0; green, 0; blue, 0 }  ,draw opacity=1 ][line width=1.5]  [dash pattern={on 1.69pt off 2.76pt}]  (255.46,85.17) -- (193.47,60.23) ;
\draw [color={rgb, 255:red, 0; green, 0; blue, 0 }  ,draw opacity=1 ][line width=1.5]  [dash pattern={on 1.69pt off 2.76pt}]  (255.46,85.17) -- (194.01,111.1) ;
\draw  [fill={rgb, 255:red, 0; green, 0; blue, 0 }  ,fill opacity=1 ] (195.45,112.83) .. controls (194.16,113.78) and (192.35,113.51) .. (191.39,112.23) .. controls (190.44,110.94) and (190.71,109.13) .. (191.99,108.17) .. controls (193.28,107.22) and (195.1,107.49) .. (196.05,108.77) .. controls (197,110.06) and (196.74,111.87) .. (195.45,112.83) -- cycle ;
\draw  [fill={rgb, 255:red, 0; green, 0; blue, 0 }  ,fill opacity=1 ] (195.19,62.55) .. controls (193.91,63.51) and (192.09,63.24) .. (191.14,61.96) .. controls (190.18,60.67) and (190.45,58.86) .. (191.74,57.9) .. controls (193.03,56.95) and (194.84,57.21) .. (195.8,58.5) .. controls (196.75,59.78) and (196.48,61.6) .. (195.19,62.55) -- cycle ;
\draw  [fill={rgb, 255:red, 0; green, 0; blue, 0 }  ,fill opacity=1 ] (258.33,84.78) .. controls (258.55,86.37) and (257.44,87.83) .. (255.85,88.04) .. controls (254.27,88.26) and (252.81,87.15) .. (252.59,85.56) .. controls (252.37,83.98) and (253.48,82.52) .. (255.07,82.3) .. controls (256.65,82.08) and (258.11,83.19) .. (258.33,84.78) -- cycle ;
\draw  [fill={rgb, 255:red, 0; green, 0; blue, 0 }  ,fill opacity=1 ] (223.99,62.83) .. controls (225.28,63.78) and (227.1,63.51) .. (228.05,62.23) .. controls (229.01,60.94) and (228.74,59.13) .. (227.45,58.17) .. controls (226.16,57.22) and (224.35,57.49) .. (223.39,58.77) .. controls (222.43,60.06) and (222.7,61.87) .. (223.99,62.83) -- cycle ;
\draw  [fill={rgb, 255:red, 0; green, 0; blue, 0 }  ,fill opacity=1 ] (223.99,112.83) .. controls (225.28,113.78) and (227.1,113.51) .. (228.05,112.23) .. controls (229.01,110.94) and (228.74,109.13) .. (227.45,108.17) .. controls (226.16,107.22) and (224.35,107.49) .. (223.39,108.77) .. controls (222.43,110.06) and (222.7,111.87) .. (223.99,112.83) -- cycle ;
\draw  [fill={rgb, 255:red, 0; green, 0; blue, 0 }  ,fill opacity=1 ] (290.16,29.78) .. controls (290.38,31.37) and (289.27,32.83) .. (287.68,33.05) .. controls (286.1,33.26) and (284.64,32.15) .. (284.42,30.57) .. controls (284.2,28.98) and (285.31,27.52) .. (286.9,27.3) .. controls (288.48,27.08) and (289.95,28.19) .. (290.16,29.78) -- cycle ;
\draw [color={rgb, 255:red, 0; green, 0; blue, 0 }  ,draw opacity=1 ][line width=1.5]    (501.73,75.58) -- (541.7,75.6) ;
\draw  [fill={rgb, 255:red, 0; green, 0; blue, 0 }  ,fill opacity=1 ] (527.9,38.21) .. controls (527.68,39.79) and (528.79,41.26) .. (530.37,41.47) .. controls (531.96,41.69) and (533.42,40.58) .. (533.63,38.99) .. controls (533.85,37.41) and (532.74,35.94) .. (531.16,35.73) .. controls (529.58,35.51) and (528.12,36.62) .. (527.9,38.21) -- cycle ;

\draw (89.43,57) node [anchor=north west][inner sep=0.75pt]    {$\omega _{2}$};
\draw (40,45) node [anchor=north west][inner sep=0.75pt]    {$\omega _{1}$};
\draw (135.43,16) node [anchor=north west][inner sep=0.75pt]    {$\omega _{3}$};
\draw (478.33,22) node [anchor=north west][inner sep=0.75pt]    {$\omega _{1}$};
\draw (359,61) node [anchor=north west][inner sep=0.75pt]    {$x'$};
\draw (497.6,81) node [anchor=north west][inner sep=0.75pt]    {$\omega _{2}$};
\draw (321,49) node [anchor=north west][inner sep=0.75pt]    {$u$};
\draw (321,103.73) node [anchor=north west][inner sep=0.75pt]    {$v$};
\draw (445.1,15) node [anchor=north west][inner sep=0.75pt]    {$y$};
\draw (454.6,114) node [anchor=north west][inner sep=0.75pt]    {$\omega _{3}$};
\draw (384.43,15) node [anchor=north west][inner sep=0.75pt]    {$y'$};
\draw (400,105) node [anchor=north west][inner sep=0.75pt]    {$z'$};
\draw (433,77) node [anchor=north west][inner sep=0.75pt]    {$x$};
\draw (38,138) node [anchor=north west][inner sep=0.75pt]   [align=left] {Illustration of \eqref{edges-M-m2-1-w1-cap-M=2}};
\draw (104,92) node [anchor=north west][inner sep=0.75pt]    {$x$};
\draw (138.1,115) node [anchor=north west][inner sep=0.75pt]    {$y$};
\draw (77.71,115) node [anchor=north west][inner sep=0.75pt]    {$v_{2}$};
\draw (46.77,115) node [anchor=north west][inner sep=0.75pt]    {$v_{1}$};
\draw (233.43,58) node [anchor=north west][inner sep=0.75pt]    {$\omega _{2}$};
\draw (184,44) node [anchor=north west][inner sep=0.75pt]    {$\omega _{1}$};
\draw (279.43,16) node [anchor=north west][inner sep=0.75pt]    {$\omega _{3}$};
\draw (248,91) node [anchor=north west][inner sep=0.75pt]    {$x$};
\draw (282.1,116) node [anchor=north west][inner sep=0.75pt]    {$y$};
\draw (221.71,115) node [anchor=north west][inner sep=0.75pt]    {$v_{2}$};
\draw (190.77,115) node [anchor=north west][inner sep=0.75pt]    {$v_{1}$};
\draw (521.83,22) node [anchor=north west][inner sep=0.75pt]    {$v_{1}$};
\draw (540.83,81) node [anchor=north west][inner sep=0.75pt]    {$v_{2}$};
\draw (370,138) node [anchor=north west][inner sep=0.75pt]   [align=left] {Illustration of \eqref{edges-T1-w1-cap-M=2}};
\draw (182,138) node [anchor=north west][inner sep=0.75pt]   [align=left] {Illustration of \eqref{edges-T1-M-w1-cap-M=2}};

\end{tikzpicture}

    \caption{$|\{\omega_1,\omega_2,\omega_3\}\cap V(\mathcal{M})|=2$.}
    \label{FIG:cap=2}
\end{figure}
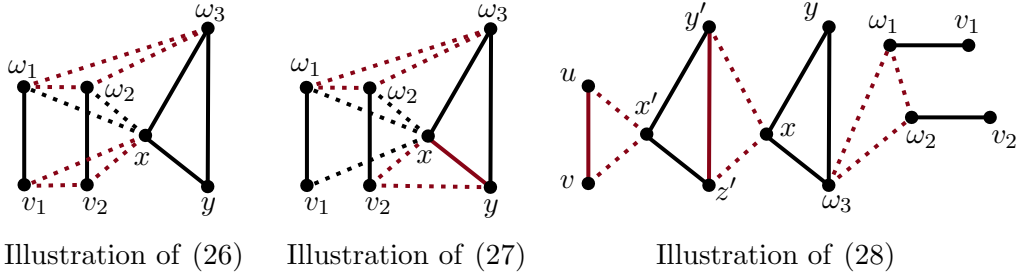

Using \eqref{edges-M-m2-1-w1-cap-M=2}, \eqref{edges-T1-M-w1-cap-M=2} and \eqref{edges-T1-w1-cap-M=2}, we update the terms involving  $e(\mathcal{T}_1)$,  $e(\mathcal{T}_1,\mathcal{M})$ and $e(\mathcal T_1)$ in $h$ to obtain 
\begin{align*}
    |H |\le q(t_1,t_2,t_3,t_4,m,i)+1\le  e_1(n,t)+\frac{5}{4}<N, 
\end{align*}
where the second inequality follows from Fact \ref{FACT:function-q}, completing the proof of Claim \ref{CLAIM:K3-free-V'-M}. 
\end{proof}

By Claim \ref{CLM:two-edge-contian-C4}, we choose $u_1v_1,u_2v_2\in\mathcal M$ such that $H[\{u_1,v_1,u_2,v_2\}]$ contains a cycle of length 4. Without loss of generality, we may assume that $u_iv_j\in E(H)$ for $i\neq j\in[2]$.
In the following, we consider the color on $v_1v_2$. Observe that at least one of the triangles $v_1v_2u_1$ or $v_1v_2u_2$ must be rainbow. Without loss of generality, we assume that $v_1v_2u_1$ is rainbow. Note that $\chi(v_1v_2)\in \chi(\mathcal T)$; otherwise,  $v_1v_2u_1$ together with triangles in $\mathcal{T}$ would form a rainbow copy of $(t+2)K_3$, a contradiction. Suppose $\chi(v_1v_2)\in \chi( xyz)$ for some $ xyz\in\mathcal{T}$.

We will consider the following two cases: $xyz\in\mathcal{T}_1$ and $xyz\in \mathcal{T} \setminus \mathcal{T}_1$. In both cases, we can find a copy of $K_{t_1-10, m + t_1-10, m + t_1-10}$ in $ H\left[V(\mathcal{T}_1) \cup V(\mathcal{M})\right]$, completing the proof of Theorem \ref{THM:tripartite-graph}. 

\medskip 

{\bf{Case 1}}: $xyz\in\mathcal{T}_1$

 Note that $u_1v_1,u_2v_2$ are the only edges in $\mathcal{M}$ that see $xyz$; otherwise, for each $uv\in \mathcal{M}\setminus\{u_1v_1,u_2v_2\}$ that sees $ xyz$, $\mathcal{T}\setminus\{ xyz\}\cup\{ v_1v_2u_1, xuv\}$ is a rainbow-$(t + 2)K_3$ in $K_n$. This means that $e(x, \mathcal{M})\le m+2$. Recall that $V''$ be the set of critical vertex in $\mathcal{T}_1$. Then 
\begin{align}\label{EQ:upperbound-criticalvtx-M}
    e(V'',V(\mathcal M))\le2mt_1-(m-2).
\end{align}



Recall that $H'=H[V'\cup V(\mathcal{M})]$ is triangle-free by Claim \ref{CLAIM:K3-free-V'-M}. We have the following:

\begin{claim}\label{CLM:M-bipartite-case-xyz-T1}
    The graph $H'$ is a bipartite graph.
\end{claim}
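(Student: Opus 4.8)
The plan is to deduce bipartiteness of $H'$ from the Andr\'asfai--Erd\H os--S\'os theorem (Theorem~\ref{THM:AES-Thm}). Since $H'$ is triangle-free by Claim~\ref{CLAIM:K3-free-V'-M} and has $v(H')=|V'|+|V(\mathcal M)|=2(t_1+m)$ vertices, it suffices to show that $H'$ has minimum degree strictly greater than $\tfrac25 v(H')=\tfrac45(t_1+m)$; bipartiteness is then immediate. Throughout write $s:=t_1+m$, so that $v(H')=2s$ and the extremal (Mantel) bound for a triangle-free graph on these vertices is $s^2$.

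First I would pin $e(H')$ down from below. Decompose $N=|H|=e(V'')+e(V'',V')+e(V'',V(\mathcal M))+e(H')+E_{\mathrm{rest}}$, where $E_{\mathrm{rest}}$ collects all edges incident to $V(\mathcal T_2\cup\mathcal T_3\cup\mathcal T_4)\cup V(\mathcal I)$. I would then bound the complementary terms by $e(V'')\le\binom{t_1}{2}$, $e(V'',V')\le 2t_1^2$, and $e(V'',V(\mathcal M))\le 2mt_1-(m-2)$, the last being exactly \eqref{EQ:upperbound-criticalvtx-M}. Combined with $N=e_1(n,t)+2$ and the stability estimates~\ref{THM:STABILITY-a}--\ref{THM:STABILITY-c} of Theorem~\ref{THM:STABILITY}, this should yield $e(H')\ge s^2-\tfrac{s}{5}+1$, i.e. $e(H')$ lies within a purely linear additive error of $s^2$.

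The minimum-degree bound then follows by a single-vertex deletion. If some $w\in V(H')$ had $\deg_{H'}(w)\le\tfrac45 s$, then $H'-w$ is triangle-free on $2s-1$ vertices, so Mantel's theorem gives $e(H'-w)\le\big\lfloor(2s-1)^2/4\big\rfloor=s^2-s$, whence $e(H')\le s^2-s+\tfrac45 s=s^2-\tfrac{s}{5}$, contradicting the lower bound of the previous paragraph. Hence $\delta(H')>\tfrac45 s=\tfrac25 v(H')$, and Theorem~\ref{THM:AES-Thm} forces $H'$ to be bipartite.

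The crux — and the main obstacle — is the lower bound on $e(H')$, and within it the control of $E_{\mathrm{rest}}$: the parts $\mathcal T_2\cup\mathcal T_3\cup\mathcal T_4$ and $\mathcal I$ may carry up to $6$ and $\sqrt{2n}$ vertices respectively, so a crude estimate of their incident edges is far too large to fit inside the linear slack $\tfrac{s}{5}\approx\tfrac{n-t}{10}$. The saving point is that $N$ exceeds $e_1(n,t)$ by only $2$, so Lemma~\ref{LEMMA:upper-bound-H-h1-ABHP} and the estimates of Fact~\ref{fact1} (and, in the case $\overline t_1>0$, its ``moreover'' part together with Fact~\ref{FACT:function-q}) force the tiling triple to be extremely close to $E_1(n,t)$, thereby capping the total number of edges meeting $\mathcal T_{\ge 2}\cup\mathcal I$ and keeping the deficit in $e(H')$ genuinely linear rather than of order $n^{3/2}$. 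Making this last estimate airtight for the full range of $i$ permitted by~\ref{THM:STABILITY-c} is the one step I expect to require the most care.
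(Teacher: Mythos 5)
Your strategy is the same as the paper's: combine Claim~\ref{CLAIM:K3-free-V'-M} with Theorem~\ref{THM:AES-Thm}, reduce bipartiteness to $\delta(H')>\tfrac45(t_1+m)$, and rule out a low-degree vertex $w$ by playing Mantel's theorem on $H'-w$ against the global edge count; the paper merely runs the implication in the opposite direction (a low-degree vertex would force $|H|<N$) instead of first extracting an unconditional lower bound on $e(H')$. The bookkeeping is also the paper's: it uses the identity $e(\mathcal T_1)+e(\mathcal T_1,\mathcal M)+e(\mathcal M)=e(V'')+e(V'',V')+e(V'',V(\mathcal M))+|H'|$ and feeds the improved sum back into $h$ via Fact~\ref{FACT:function-q}.

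There is, however, a genuine quantitative gap. Since $q_1=h-m-t_1\le e_1(n,t)+\tfrac14$ while $N=e_1(n,t)+2$, the generic bound $h$ can exceed $N$ by as much as $m+t_1+O(1)$; to reach a contradiction you must therefore save roughly $m+t_1$ \emph{before} the additional $\tfrac15(t_1+m)$ coming from the hypothetical low-degree vertex even becomes relevant. Your list of complementary bounds supplies the $(m-2)$ saving via \eqref{EQ:upperbound-criticalvtx-M}, but you use the trivial $e(V'',V')\le 2t_1^2$, whereas the paper needs and proves the refinement $e(V'',V')\le 2t_1^2-(t_1-2)$ (its bound \eqref{EQ:bound-V1-V2}, derived from the Case-1 configuration: for every $x'y'z'\in\mathcal T_1\setminus\{xyz\}$ seen by an $\mathcal M$-edge outside $\{u_1v_1,u_2v_2\}$, at most one of $xy',xz'$ lies in $H$, on pain of a rainbow $(t+2)K_3$). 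Without that extra $t_1-2$, your chain yields only $e(H')\ge (t_1+m)^2-t_1-O(1)$, and beating $(t_1+m)^2-\tfrac15(t_1+m)$ then requires $4t_1<m-O(1)$, which fails once $t\gtrsim n/11$ --- well inside the range $t\le\tfrac{2n-6}{9}$. Relatedly, the obstacle you single out (controlling $E_{\mathrm{rest}}$ and the range of $i$) is not where the difficulty lies: those edges are already absorbed by the constituent lemmas of $h$ and by the $-(i-1)^2/4$ term in the proof of Fact~\ref{FACT:function-q}. The missing ingredient is the structural saving on $e(V'',V')$.
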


\begin{proof}
By Theorem \ref{THM:AES-Thm}, it suffices to establish $\delta(H')>4(m+t_1)/5$. Suppose to the contrary there exits  $w\in V'\cup V(\mathcal M)$ such that $d_{H'}(w)\le4(m+t_1)/5$.   We will update $e(\mathcal T_1)+e(\mathcal T_1,\mathcal M)+e(\mathcal M)$ in $h$ to get a contradiction. Note that
    \begin{align*}
        e(\mathcal T_1)+e(\mathcal T_1,\mathcal M)+e(\mathcal M)=\left|H[V'']\right|+|H'|+e(V'',V')+e(V'', V(\mathcal M)).
    \end{align*}
Naturally, we have
\begin{align}\label{EQ:trivial-bound-V1-V2-and-in-V2}
       \left|H[V'']\right|\le\binom{t_1}{2}.
\end{align}
As in the rightmost figure of Figure~\ref{FIG:cap=2}, if $ x'y'z'\in \mathcal T_1\setminus\{ xyz\}$ is seen by an edge $uv\in\mathcal M\setminus\{u_1v_1,u_2v_2\}$, then at most one of $\{xy',xz'\}$ lies in $E(H)$.
By Claim~\ref{CLM:at-most-two-triangle}, the number of choice $x'y'z'$ is at least $t_1-2$, thus
\begin{align}\label{EQ:bound-V1-V2}
       e(V'',V')\le 2t_1^2-(t_1-2).
\end{align}
Now we bound $|H'|$. By Claim \ref{CLAIM:K3-free-V'-M}, we know that $H'-w$ is triangle-free. Using the classical Mentel's Theorem, 
    \begin{align*}
       |H'\setminus\{w\}|\le \left\lfloor\frac{(2t_1+2m-1)^2}{4}\right\rfloor. 
    \end{align*}
Thus, 
    \begin{align}\label{EQ:upperbound-V'&M}
        |H'|&=|H'\setminus\{w\}|+d_{H'}(w)
        \le \left\lfloor\frac{(2t_1+2m-1)^2}{4}\right\rfloor+\frac{4(m+t_1)}{5}= (t_1+m)^2-\frac15(t_1+m). 
    \end{align} 
We refine the upper bound of $e(\mathcal T_1) + e(\mathcal T_1,\mathcal M) + e(\mathcal M)$ in $h$ by replacing the earlier estimate from Lemma~\ref{LEM:upper-bound-each-part-1}~\ref{itm:m}~\ref{itm:mt1}~\ref{itm:t1}, i.e., $e(\mathcal T_1) + e(\mathcal T_1,\mathcal M) + e(\mathcal M) \le 7\binom{t_1}{2} + 3t_1 + 4mt_1 + m^2$, with:
    \begin{align*}
        e(\mathcal T_1,\mathcal M) + &e(\mathcal M) + e(\mathcal T_1)
        \\\le& 2mt_1 + \binom{t_1}{2} + 2t_1^2 + (t_1 + m)^2 - (m - 2)-(t_1-2) - \frac{1}{5}(t_1 + m)\\
        \le& 7\binom{t_1}{2} + 3t_1 + m^2 + 4mt_1 - (m - 2)-(t_1-2) - \frac{1}{5}(t_1 + m), 
    \end{align*}
given by \eqref{EQ:upperbound-criticalvtx-M}, \eqref{EQ:trivial-bound-V1-V2-and-in-V2}, \eqref{EQ:bound-V1-V2} and \eqref{EQ:upperbound-V'&M}. 
Thus, 
    \begin{align*}
        |H |
        &\le f + im + m^2 + (3 + 3m)t_4 + (2 + i)t_4 + \binom{3t_4}{2} - (m - 2) - (t_1 - 2) - \frac{t_1 + m}{5}\\
        &= q_1(t_1,t_2,t_3,t_4,m,i) - \frac{t_1 + m}{5} + 4 < N,
    \end{align*}
where the last inequality follows from Fact \ref{FACT:function-q} and $t_1,m=\Theta(n)$. 
\end{proof}

\begin{claim}\label{CLAIM:number-T-case-T1}
    We have 
    \begin{multline*}
        e\left(\mathcal{T}_1\setminus\{ xyz\} \right) + e\left(\mathcal T_1\setminus\{ xyz\},\, \mathcal M \right) + e\left(\mathcal M \right) 
        \geq 7\binom{t_1-1}{2}+3(t_1-1) +m^2+4m(t_1-1)-9. 
    \end{multline*}
\end{claim}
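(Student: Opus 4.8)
The plan is to derive the lower bound by writing the total as \emph{exactly} $|H| = N = S + R$, where $S := e(\mathcal T_1\setminus\{xyz\}) + e(\mathcal T_1\setminus\{xyz\},\mathcal M) + e(\mathcal M)$ is the quantity of interest and $R$ is its complement, and then bounding $R$ from above. Writing $\mathcal T_1' := \mathcal T_1\setminus\{xyz\}$ and setting $B := 7\binom{t_1-1}{2}+3(t_1-1)+m^2+4m(t_1-1)$, the goal $S\ge B-9$ is equivalent to $R\le N-B+9$. I would split $R$ cleanly into the edges incident to the special triangle, namely $e(xyz)=3$, $e(xyz,\mathcal T_1')$ and $e(xyz,\mathcal M)$, together with $R_{\mathrm{rest}}$, the edges having an endpoint in $V(\mathcal I)\cup V(\mathcal T_2\cup\mathcal T_3\cup\mathcal T_4)$. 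The point of isolating $xyz$ is that its contributions to $e(\mathcal T_1,\mathcal M)$ and $e(\mathcal T_1)$ are \emph{depressed} below the generic maxima (because of the special structure established in Case~1), and these savings are exactly what make $S$, which excludes $xyz$, nearly maximal.

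The crux is a sharp bound on $e(xyz,\mathcal T_1')$. The generic estimate $e(xyz,x'y'z')\le 7$ from \eqref{EQ:edge-between-triangles-T1} is too wasteful by a $\Theta(t_1)$ term; instead I would argue that all but $O(1)$ of the triangles satisfy the improved bound $6$. Indeed, by Claim~\ref{CLM:structure-edge-between-T1} the two non-critical vertices $y,z$ of $xyz$ each send at most $2$ edges to any $x'y'z'\in\mathcal T_1'$, and the mechanism behind \eqref{EQ:bound-V1-V2} shows that whenever $x'y'z'$ is seen by an $\mathcal M$-edge other than $u_1v_1,u_2v_2$, the critical vertex $x$ joins at most one of $\{y',z'\}$, whence $e(x,x'y'z')\le 2$ and so $e(xyz,x'y'z')\le 6$. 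By Claim~\ref{CLM:at-most-two-triangle} at most two triangles fail this hypothesis, so $e(xyz,\mathcal T_1')\le 6(t_1-1)+2$. In the same vein, since $u_1v_1,u_2v_2$ are the only $\mathcal M$-edges seeing $xyz$, every other $\mathcal M$-edge contributes at most $3$ edges to $xyz$ by \eqref{EQ:bound-edge-triangle-edge-T1-M}, giving $e(xyz,\mathcal M)\le 3m+2$ (consistent with $e(x,\mathcal M)\le m+2$ derived in Case~1).

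For $R_{\mathrm{rest}}$ I would not estimate term by term but package everything through $h$: since $R_{\mathrm{rest}}$ is disjoint from the edge-types bounded by $7\binom{t_1}{2}+3t_1$, $4mt_1$, and $m^2$, Lemma~\ref{LEMMA:upper-bound-H-h1-ABHP} yields $R_{\mathrm{rest}}\le h(t_1,t_2,t_3,t_4,m,i)-7\binom{t_1}{2}-3t_1-4mt_1-m^2$. Combining the pieces,
\[
S \;=\; N-R \;\ge\; N-3-\bigl(6(t_1-1)+2\bigr)-\bigl(3m+2\bigr)-\Bigl(h-7\tbinom{t_1}{2}-3t_1-4mt_1-m^2\Bigr).
\]
Now I would invoke the definition $q_1=h-m-t_1$ and Fact~\ref{FACT:function-q}, which give $h\le e_1(n,t)+\tfrac14+m+t_1$, together with $N=e_1(n,t)+2$. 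Substituting and using the algebraic identity $7\binom{t_1}{2}+3t_1+4mt_1+m^2=B+7(t_1-1)+3+4m$, the terms $m$, $t_1$, $4m$, and $e_1(n,t)$ all cancel, and the surviving $7(t_1-1)-6(t_1-1)-t_1$ collapses to $-1$; one is left with $S\ge B-\tfrac{13}{4}>B-9$. A pleasant feature of this packaging is that it is uniform over the sub-cases $t_1\in[t-1,t+1]$ and $t_2+t_3+t_4\le 2$ allowed by Theorem~\ref{THM:STABILITY}, since the $\mathcal T_2,\mathcal T_3,\mathcal T_4,\mathcal I$ contributions are entirely subsumed by Fact~\ref{FACT:function-q}.

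The main obstacle is the improved per-triangle bound $e(xyz,x'y'z')\le 6$: this is the single step that requires the rainbow $(t+2)K_3$-avoidance (via the ``at most one of $\{xy',xz'\}$'' argument) rather than pure edge-counting, and it is exactly the ingredient that cancels the leading $\Theta(t_1)$ discrepancy; a cruder argument using $7$ loses a linear term and cannot reach any $O(1)$ additive error. Everything after that is bookkeeping, with the comfortable slack between the attained $-\tfrac{13}{4}$ and the stated $-9$ absorbing the $O(1)$ exceptional triangles, the constants from $e(xyz,\mathcal M)$, and the floor/parity terms hidden in $e_1(n,t)$.
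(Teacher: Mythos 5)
Your proposal is correct and follows essentially the same route as the paper: isolate the contributions of the special triangle $xyz$ (using that $u_1v_1,u_2v_2$ are the only $\mathcal M$-edges seeing it, so $e(xyz,\mathcal M)\le 3m+2$, and that $e(xyz,x'y'z')\le 6$ for all but $O(1)$ triangles via a rainbow-swap plus Claim~\ref{CLM:at-most-two-triangle}), then absorb all remaining edge types into $h$ and invoke Fact~\ref{FACT:function-q}. The paper merely phrases this contrapositively and derives the per-pair bound $6$ by forbidding $y'z'$ from seeing $xyz$ rather than forbidding $x$ from joining both $y'$ and $z'$; these are interchangeable variants of the same swap, and your constant $-\tfrac{13}{4}$ indeed beats the required $-9$.
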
 
\begin{proof}
By contradiction, we will update the bound on $e(\mathcal T_1)+e(\mathcal M)+e(\mathcal T_1,\mathcal M)$ in $h$, replacing the earlier estimate given by Lemma~\ref{LEM:upper-bound-each-part-1}.

For any $ x'y'z'\in \mathcal{T}_1'$ with the critical vertex $x'$, $y'z'$ cannot see any vertex of $ xyz$; otherwise, for an $\mathcal M$-edge $uv\in \mathcal{M}\setminus\{u_1v_1,u_2v_2\}$ and $y'z'$ sees $x_0\in\{x,y,z\}$ of $xyz$, $\mathcal{T}\setminus\{ xyz, x'y'z'\}\cup\{ v_1v_2u_1, x'uv, y'z'x_0\}$ is a rainbow-$(t + 2)K_3$ in $K_n$. This means $e(xyz,  x'y'z')\le6$ and then 
    \begin{align}\label{EQ:upperbound-T1(xyz)-xyz}
        e(xyz, \mathcal T_1\setminus\{xyz\})\le7(t_1-1)-(t_1-2).
    \end{align}

Recall that $u_1v_1,u_2v_2$ are the only edges in $\mathcal{M}$ that see $xyz$. We have 
    \begin{align}\label{EQ:upperbound-xyz-M-2}
        e(xyz,V(\mathcal M))\le4m-(m-2).
    \end{align}

Combining \eqref{EQ:upperbound-T1(xyz)-xyz} and \eqref{EQ:upperbound-xyz-M-2}, we have 
\begin{align*}
    e(\mathcal T_1)+e(\mathcal M)+e(\mathcal T_1,\mathcal M)
    &=e(\mathcal T_1\setminus\{ xyz\})+e(\mathcal T_1\setminus\{ xyz\},\mathcal M)+e(\mathcal M)+e( xyz)\\
    &\quad+e( xyz,\mathcal M)+e(\mathcal T_1\setminus\{ xyz\}, xyz)\\
    &\le 7\binom{t_1-1}{2} +3(t_1-1)+m^2+4m(t_1-1)- 10+3\\
    &\quad+4m-(m-2)+7(t_1-1)-(t_1-2)\\
    &=7\binom{t_1}{2} +3t_1+m^2+4mt_1- 10-(m-2)-(t_1-2),
\end{align*}
which together with Fact \ref{FACT:function-q} gives
    \begin{align*}
        |H |&\le f+im+m^2+(3+3m)t_4+(2+i)t_4+\binom{3t_4}{2}-(m-2)-(t_1-2)-10\\
        &\le q_1(t_1,t_2,t_3,t_4,m,i)-5<N,
    \end{align*}
a contradiction.
\end{proof}

Let $U'=V'-\{y,z\}$, $U''=V''-\{x\}$, and let $T$ be the subgraph of $H$ induced by $U'\cup U''\cup V(\mathcal{M})$, with all edges between vertices in $U''$ removed. By Claim~\ref{CLM:M-bipartite-case-xyz-T1}, $T$ is a $(t_1-1)\times (t_1+m-1)\times (t_1+m-1)$ tripartite graph. On one hand, 
    \begin{align*}
        e(T)\le (t_1-1)(2(t_1-1)+2m)+(t_1-1+m)^2,
    \end{align*}
with equality if $T$ is complete. 
On the other hand, it follows from Claim \ref{CLAIM:number-T-case-T1} that
\begin{align*}
    e(T)
    &= e(\mathcal T_1\setminus\{ xyz\})+e(\mathcal T_1\setminus\{ xyz\},\mathcal M)+e(\mathcal M)-e(H[U''])\\
    &\ge 7\binom{t_1-1}{2}+3(t_1-1) +m^2+4m(t_1-1)- 9-\binom{t_1-1}{2}\\
    &=(t_1-1)(2(t_1-1)+2m)+(t_1-1+m)^2-9.
\end{align*}
Clearly, $T$ is obtained from a  $(t_1-1)\times (t_1+m-1)\times (t_1+m-1)$ complete tripartite graph by deleting at most 9 edges and thus it contains a copy of $K_{t_1-10, m + t_1-10, m + t_1-10}$.

\begin{figure}
    \centering

\tikzset{every picture/.style={line width=0.75pt}} 

\begin{tikzpicture}[x=0.75pt,y=0.75pt,yscale=-0.92,xscale=0.92]

\tikzset{every picture/.style={line width=0.75pt}} 

\draw [color={rgb, 255:red, 57; green, 94; blue, 178 }  ,draw opacity=1 ][line width=1.5]    (412.26,97.48) -- (380.2,124.82) ;
\draw [color={rgb, 255:red, 136; green, 4; blue, 22 }  ,draw opacity=1 ][line width=1.5]  [dash pattern={on 1.69pt off 2.76pt}]  (247.51,47.14) -- (284.13,116.45) ;
\draw [color={rgb, 255:red, 57; green, 94; blue, 178 }  ,draw opacity=1 ][line width=1.5]    (247.29,47.17) -- (215.46,102.17) ;
\draw [color={rgb, 255:red, 136; green, 4; blue, 22 }  ,draw opacity=1 ][line width=1.5]  [dash pattern={on 1.69pt off 2.76pt}]  (470.18,124.82) -- (379.64,125.68) ;
\draw [color={rgb, 255:red, 136; green, 4; blue, 22 }  ,draw opacity=1 ][line width=1.5]  [dash pattern={on 1.69pt off 2.76pt}]  (507.13,61.83) -- (470.51,43.52) ;
\draw [color={rgb, 255:red, 136; green, 4; blue, 22 }  ,draw opacity=1 ][line width=1.5]  [dash pattern={on 1.69pt off 2.76pt}]  (469.2,43.52) -- (507.13,112.83) ;
\draw [color={rgb, 255:red, 136; green, 4; blue, 22 }  ,draw opacity=1 ][line width=1.5]  [dash pattern={on 1.69pt off 2.76pt}]  (438.46,98.55) -- (380.2,124.82) ;
\draw [color={rgb, 255:red, 0; green, 0; blue, 0 }  ,draw opacity=1 ][line width=1.5]    (470.29,43.56) -- (438.46,98.55) ;
\draw [color={rgb, 255:red, 57; green, 94; blue, 178 }  ,draw opacity=1 ][line width=1.5]  [dash pattern={on 1.69pt off 2.76pt}]  (111.17,52.8) -- (63.67,52.77) ;
\draw [color={rgb, 255:red, 0; green, 0; blue, 0 }  ,draw opacity=1 ][line width=1.5]    (380.08,43.56) -- (411.84,98.55) ;
\draw [color={rgb, 255:red, 0; green, 0; blue, 0 }  ,draw opacity=1 ][line width=1.5]    (380.2,124.82) -- (379.87,43.52) ;
\draw  [fill={rgb, 255:red, 0; green, 0; blue, 0 }  ,fill opacity=1 ] (378.47,127.15) .. controls (379.76,128.1) and (381.57,127.84) .. (382.52,126.55) .. controls (383.47,125.27) and (383.2,123.45) .. (381.92,122.5) .. controls (380.64,121.54) and (378.83,121.81) .. (377.87,123.09) .. controls (376.92,124.38) and (377.19,126.19) .. (378.47,127.15) -- cycle ;
\draw  [fill={rgb, 255:red, 0; green, 0; blue, 0 }  ,fill opacity=1 ] (408.98,98.16) .. controls (408.76,99.75) and (409.87,101.21) .. (411.45,101.43) .. controls (413.03,101.64) and (414.49,100.54) .. (414.71,98.95) .. controls (414.92,97.36) and (413.82,95.9) .. (412.23,95.68) .. controls (410.65,95.47) and (409.19,96.57) .. (408.98,98.16) -- cycle ;
\draw  [fill={rgb, 255:red, 0; green, 0; blue, 0 }  ,fill opacity=1 ] (377.22,43.16) .. controls (377,44.75) and (378.11,46.21) .. (379.69,46.43) .. controls (381.27,46.65) and (382.73,45.54) .. (382.95,43.95) .. controls (383.16,42.36) and (382.06,40.9) .. (380.47,40.69) .. controls (378.89,40.47) and (377.43,41.58) .. (377.22,43.16) -- cycle ;
\draw [color={rgb, 255:red, 136; green, 4; blue, 22 }  ,draw opacity=1 ][line width=1.5]    (470.18,124.82) -- (438.46,99.55) ;
\draw [color={rgb, 255:red, 0; green, 0; blue, 0 }  ,draw opacity=1 ][line width=1.5]    (470.51,43.52) -- (470.18,124.82) ;
\draw  [fill={rgb, 255:red, 0; green, 0; blue, 0 }  ,fill opacity=1 ] (471.9,127.15) .. controls (470.62,128.1) and (468.8,127.84) .. (467.85,126.55) .. controls (466.89,125.27) and (467.16,123.45) .. (468.45,122.5) .. controls (469.74,121.54) and (471.55,121.81) .. (472.51,123.09) .. controls (473.46,124.38) and (473.19,126.19) .. (471.9,127.15) -- cycle ;
\draw  [fill={rgb, 255:red, 0; green, 0; blue, 0 }  ,fill opacity=1 ] (441.33,98.16) .. controls (441.55,99.75) and (440.44,101.21) .. (438.85,101.43) .. controls (437.27,101.65) and (435.81,100.53) .. (435.59,98.95) .. controls (435.37,97.36) and (436.48,95.9) .. (438.07,95.68) .. controls (439.65,95.46) and (441.11,96.58) .. (441.33,98.16) -- cycle ;
\draw  [fill={rgb, 255:red, 0; green, 0; blue, 0 }  ,fill opacity=1 ] (473.16,43.16) .. controls (473.38,44.75) and (472.27,46.21) .. (470.68,46.43) .. controls (469.1,46.65) and (467.64,45.54) .. (467.42,43.95) .. controls (467.2,42.36) and (468.31,40.9) .. (469.9,40.68) .. controls (471.48,40.47) and (472.95,41.58) .. (473.16,43.16) -- cycle ;
\draw [color={rgb, 255:red, 136; green, 4; blue, 22 }  ,draw opacity=1 ][line width=1.5]    (507.13,61.83) -- (507.13,112.83) ;
\draw  [fill={rgb, 255:red, 0; green, 0; blue, 0 }  ,fill opacity=1 ] (505.4,64.16) .. controls (506.69,65.11) and (508.51,64.84) .. (509.47,63.56) .. controls (510.42,62.27) and (510.15,60.46) .. (508.86,59.5) .. controls (507.58,58.55) and (505.76,58.82) .. (504.8,60.1) .. controls (503.84,61.39) and (504.11,63.2) .. (505.4,64.16) -- cycle ;
\draw  [fill={rgb, 255:red, 0; green, 0; blue, 0 }  ,fill opacity=1 ] (505.4,115.16) .. controls (506.69,116.11) and (508.51,115.85) .. (509.47,114.56) .. controls (510.42,113.28) and (510.15,111.46) .. (508.86,110.51) .. controls (507.58,109.55) and (505.76,109.82) .. (504.8,111.1) .. controls (503.84,112.39) and (504.11,114.21) .. (505.4,115.16) -- cycle ;
\draw [color={rgb, 255:red, 136; green, 4; blue, 22 }  ,draw opacity=1 ][line width=1.5]  [dash pattern={on 1.69pt off 2.76pt}]  (284.13,65.45) -- (247.51,47.14) ;
\draw [color={rgb, 255:red, 0; green, 0; blue, 0 }  ,draw opacity=1 ][line width=1.5]    (247.18,128.44) -- (215.46,103.17) ;
\draw [color={rgb, 255:red, 0; green, 0; blue, 0 }  ,draw opacity=1 ][line width=1.5]    (247.51,47.14) -- (247.18,128.44) ;
\draw  [fill={rgb, 255:red, 0; green, 0; blue, 0 }  ,fill opacity=1 ] (248.9,130.77) .. controls (247.62,131.72) and (245.8,131.45) .. (244.85,130.17) .. controls (243.89,128.88) and (244.16,127.07) .. (245.45,126.11) .. controls (246.74,125.16) and (248.55,125.43) .. (249.51,126.71) .. controls (250.46,128) and (250.19,129.81) .. (248.9,130.77) -- cycle ;
\draw  [fill={rgb, 255:red, 0; green, 0; blue, 0 }  ,fill opacity=1 ] (218.33,101.78) .. controls (218.55,103.37) and (217.44,104.83) .. (215.85,105.04) .. controls (214.27,105.26) and (212.81,104.15) .. (212.59,102.56) .. controls (212.37,100.98) and (213.48,99.52) .. (215.07,99.3) .. controls (216.65,99.08) and (218.11,100.19) .. (218.33,101.78) -- cycle ;
\draw  [fill={rgb, 255:red, 0; green, 0; blue, 0 }  ,fill opacity=1 ] (250.16,46.78) .. controls (250.38,48.37) and (249.27,49.83) .. (247.68,50.05) .. controls (246.1,50.26) and (244.64,49.15) .. (244.42,47.57) .. controls (244.2,45.98) and (245.31,44.52) .. (246.9,44.3) .. controls (248.48,44.08) and (249.95,45.19) .. (250.16,46.78) -- cycle ;
\draw [color={rgb, 255:red, 136; green, 4; blue, 22 }  ,draw opacity=1 ][line width=1.5]    (284.13,65.45) -- (284.13,116.45) ;
\draw  [fill={rgb, 255:red, 0; green, 0; blue, 0 }  ,fill opacity=1 ] (282.4,67.77) .. controls (283.69,68.73) and (285.51,68.46) .. (286.47,67.17) .. controls (287.42,65.89) and (287.15,64.07) .. (285.86,63.12) .. controls (284.58,62.16) and (282.76,62.43) .. (281.8,63.72) .. controls (280.84,65) and (281.11,66.82) .. (282.4,67.77) -- cycle ;
\draw  [fill={rgb, 255:red, 0; green, 0; blue, 0 }  ,fill opacity=1 ] (282.4,118.78) .. controls (283.69,119.73) and (285.51,119.46) .. (286.47,118.18) .. controls (287.42,116.89) and (287.15,115.08) .. (285.86,114.12) .. controls (284.58,113.17) and (282.76,113.44) .. (281.8,114.72) .. controls (280.84,116.01) and (281.11,117.82) .. (282.4,118.78) -- cycle ;
\draw [color={rgb, 255:red, 0; green, 0; blue, 0 }  ,draw opacity=1 ][line width=1.5]    (111.17,52.8) -- (111.17,116.11) ;
\draw  [fill={rgb, 255:red, 0; green, 0; blue, 0 }  ,fill opacity=1 ] (112.89,118.43) .. controls (111.61,119.39) and (109.79,119.12) .. (108.84,117.83) .. controls (107.88,116.55) and (108.15,114.73) .. (109.44,113.78) .. controls (110.73,112.82) and (112.54,113.09) .. (113.5,114.38) .. controls (114.45,115.66) and (114.18,117.48) .. (112.89,118.43) -- cycle ;
\draw  [fill={rgb, 255:red, 0; green, 0; blue, 0 }  ,fill opacity=1 ] (114.05,52.41) .. controls (114.26,54) and (113.15,55.46) .. (111.57,55.68) .. controls (109.98,55.89) and (108.52,54.78) .. (108.3,53.2) .. controls (108.09,51.61) and (109.2,50.15) .. (110.78,49.93) .. controls (112.37,49.71) and (113.83,50.82) .. (114.05,52.41) -- cycle ;
\draw [color={rgb, 255:red, 0; green, 0; blue, 0 }  ,draw opacity=1 ][line width=1.5]    (63.67,52.77) -- (63.67,116.07) ;
\draw  [fill={rgb, 255:red, 0; green, 0; blue, 0 }  ,fill opacity=1 ] (65.39,118.39) .. controls (64.11,119.35) and (62.29,119.08) .. (61.34,117.8) .. controls (60.38,116.51) and (60.65,114.69) .. (61.94,113.74) .. controls (63.23,112.79) and (65.04,113.05) .. (66,114.34) .. controls (66.95,115.62) and (66.68,117.44) .. (65.39,118.39) -- cycle ;
\draw  [fill={rgb, 255:red, 0; green, 0; blue, 0 }  ,fill opacity=1 ] (66.55,52.37) .. controls (66.76,53.96) and (65.65,55.42) .. (64.07,55.64) .. controls (62.48,55.86) and (61.02,54.75) .. (60.8,53.16) .. controls (60.59,51.57) and (61.7,50.11) .. (63.28,49.89) .. controls (64.87,49.68) and (66.33,50.79) .. (66.55,52.37) -- cycle ;
\draw [color={rgb, 255:red, 0; green, 0; blue, 0 }  ,draw opacity=1 ][line width=1.5]    (63.67,52.77) -- (111.17,116.11) ;
\draw [color={rgb, 255:red, 0; green, 0; blue, 0 }  ,draw opacity=1 ][line width=1.5]    (63.67,116.07) -- (111.17,52.8) ;

\draw (450.77,32.62) node [anchor=north west][inner sep=0.75pt]    {$x'$};
\draw (510.47,50.32) node [anchor=north west][inner sep=0.75pt]    {$u$};
\draw (478.77,119.95) node [anchor=north west][inner sep=0.75pt]    {$y'$};
\draw (444.17,80) node [anchor=north west][inner sep=0.75pt]    {$z'$};
\draw (407.93,80) node [anchor=north west][inner sep=0.75pt]    {$z$};
\draw (365.43,29.45) node [anchor=north west][inner sep=0.75pt]    {$y$};
\draw (510.86,113.91) node [anchor=north west][inner sep=0.75pt]    {$v$};
\draw (364.77,122.78) node [anchor=north west][inner sep=0.75pt]    {$x$};
\draw (240,32.23) node [anchor=north west][inner sep=0.75pt]    {$x$};
\draw (287.47,53.93) node [anchor=north west][inner sep=0.75pt]    {$u$};
\draw (252.77,122.57) node [anchor=north west][inner sep=0.75pt]    {$y$};
\draw (200,100) node [anchor=north west][inner sep=0.75pt]    {$z$};
\draw (287.86,117.52) node [anchor=north west][inner sep=0.75pt]    {$v$};
\draw (40.77,46.57) node [anchor=north west][inner sep=0.75pt]    {$u_{1}$};
\draw (44.77,111.57) node [anchor=north west][inner sep=0.75pt]    {$v_{1}$};
\draw (114.77,46.57) node [anchor=north west][inner sep=0.75pt]    {$u_{2}$};
\draw (115.77,111.57) node [anchor=north west][inner sep=0.75pt]    {$v_{2}$};
\draw (180,147.28) node [anchor=north west][inner sep=0.75pt]   [align=left] {Illustration of  \eqref{EQ:upperbound-criticalvtx-M}};
\draw (348,149.28) node [anchor=north west][inner sep=0.75pt]   [align=left] {Illustration of  \eqref{EQ:upperbound-T1(xyz)-xyz} and \eqref{update-edge-T1-Ti}};

\end{tikzpicture}

    \caption{$\chi(u_1u_2)=\chi(xz)$.}
    \label{FIG:chi(u_1u_2)=chi(xz)}
\end{figure}
\medskip

{\bf{Case 2}}: $xyz \in \mathcal{T} \setminus \mathcal{T}_1$.

We proceed using the same strategy as in Case 1. Let $\mathcal T_1'$ denote the triangles in $\mathcal{T}_1$ seen by at least $\varepsilon n$ $\mathcal{M}$-edges, and $\mathcal T_1''$ denote the remaining triangles in $\mathcal{T}_1$. By Claim \ref{CLM:at-most-two-triangle}, $|\mathcal T_1''|\le 2$. Now we update  $e(\mathcal{T}_1, \mathcal{T}_2 \cup \mathcal{T}_3 \cup \mathcal{T}_4).$ For $x'y'z' \in \mathcal{T}'_1$, then $y'z'$ cannot see any vertex in $xyz$; otherwise, assume $y'z'$ sees $x_0 \in \{x, y, z\}$, and an $\mathcal{M}$-edge $uv \in \mathcal{M} \setminus \{u_1v_1, u_2v_2\}$ sees $x'y'z' \in \mathcal{T}_1$ with the critical vertex $x'$. Then 
\begin{align}\label{update-edge-T1-Ti}
    \mathcal{T} \setminus \{xyz, x'y'z'\} \cup \{uvx', x_0y'z', u_1v_1v_2\}
\end{align}
is a rainbow-$(t + 2)K_3$ in $K_n$ (see Figure~\ref{FIG:chi(u_1u_2)=chi(xz)}).
This together with Lemma~\ref{LEM:upper-bound-each-part-2}~\ref{itm:t1>1-t1tj} gives, 
\begin{align*}
    e(\mathcal T_1',xyz)&\le6|\mathcal{T}_1'|,\\
    e(\mathcal T_1'', xyz)&\le7|\mathcal{T}_1''|,\\ e(\mathcal{T}_1, \mathcal{T}_2 \cup \mathcal{T}_3 \cup \mathcal{T}_4\setminus\{xyz\})&\le7t_1(t_2+t_3+t_4-1),
\end{align*}
and thus
\begin{align}\label{EQ:T1-T4-edge-case-notin-T1}
    e(\mathcal{T}_1, \mathcal{T}_2 \cup \mathcal{T}_3 \cup \mathcal{T}_4)&= e(\mathcal T_1, xyz)  + e(\mathcal{T}_1, \mathcal{T}_2 \cup \mathcal{T}_3 \cup \mathcal{T}_4 \setminus \{xyz\})  \notag\\
    &= e(\mathcal T_1', xyz) + e(\mathcal T_1'', xyz) + e(\mathcal{T}_1, \mathcal{T}_2 \cup \mathcal{T}_3 \cup \mathcal{T}_4 \setminus \{xyz\}) \notag\\
    &\leq 6|\mathcal T_1'| + 7|\mathcal T_1''| + 7t_1(t_2 + t_3 + t_4 - 1) \notag\\
    &\le 6(t_1-2)+7\times2+7t_1t_2+7t_1t_3+7t_1t_4-7t_1   \notag\\
    &= 7t_1t_2 + 7t_1 t_3 +  7t_1t_4 - (t_1 - 2).
\end{align}

\begin{claim}\label{CLM:M-bipartite-case-xyz-notin-T1}
    The graph $H'$ is a bipartite graph. 
\end{claim}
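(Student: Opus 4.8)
The plan is to argue exactly as in Claim~\ref{CLM:M-bipartite-case-xyz-T1}. Since $H'$ is triangle-free by Claim~\ref{CLAIM:K3-free-V'-M} and has $v(H')=2(t_1+m)$ vertices, the Andr\'{a}sfai--Erd\H{o}s--S\'{o}s Theorem (Theorem~\ref{THM:AES-Thm}) reduces bipartiteness of $H'$ to the minimum-degree condition $\delta(H')>\tfrac{4}{5}(t_1+m)$. So I would suppose for contradiction that some $w\in V'\cup V(\mathcal{M})$ satisfies $d_{H'}(w)\le \tfrac{4}{5}(t_1+m)$, and then derive $|H|<N$.

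The first step is to bound $|H'|$. Applying Mantel's theorem to the triangle-free graph $H'-w$ on $2(t_1+m)-1$ vertices and adding back the edges incident to $w$ gives $|H'|\le \lfloor (2t_1+2m-1)^2/4\rfloor+\tfrac{4}{5}(t_1+m)=(t_1+m)^2-\tfrac{1}{5}(t_1+m)$. Writing $e(\mathcal{T}_1)+e(\mathcal{T}_1,\mathcal{M})+e(\mathcal{M})=|H[V'']|+e(V'',V')+e(V'',V(\mathcal{M}))+|H'|$ and using only the trivial bounds $|H[V'']|\le\binom{t_1}{2}$, $e(V'',V')\le 2t_1^2$, $e(V'',V(\mathcal{M}))\le 2mt_1$ together with the identity $\binom{t_1}{2}+2t_1^2+2mt_1+(t_1+m)^2=7\binom{t_1}{2}+3t_1+4mt_1+m^2$, I obtain the group-$(a)$ refinement $e(\mathcal{T}_1)+e(\mathcal{T}_1,\mathcal{M})+e(\mathcal{M})\le 7\binom{t_1}{2}+3t_1+4mt_1+m^2-\tfrac{1}{5}(t_1+m)$.

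The second step combines the savings against a \emph{sharp} baseline. Because $xyz\in\mathcal{T}\setminus\mathcal{T}_1$, we have $\overline{t}_1=(t+1)-t_1\ge 1>0$, so the ``moreover'' part of Fact~\ref{fact1} applies and gives $h(t_1,t_2,t_3,t_4,m,i)\le e_1(n,t)+t+6$. Feeding the group-$(a)$ refinement above and the already-established group-$(b)$ saving \eqref{EQ:T1-T4-edge-case-notin-T1}, namely $e(\mathcal{T}_1,\mathcal{T}_2\cup\mathcal{T}_3\cup\mathcal{T}_4)\le 7t_1(t_2+t_3+t_4)-(t_1-2)$, into $h$ yields $|H|\le h-\tfrac{1}{5}(t_1+m)-(t_1-2)\le e_1(n,t)+t+6-\tfrac{1}{5}(t_1+m)-(t_1-2)$. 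Using $t_1\ge t-1$ from Theorem~\ref{THM:STABILITY}~\ref{THM:STABILITY-a} to cancel the linear-in-$t$ terms leaves $|H|\le e_1(n,t)+9-\tfrac{1}{5}(t_1+m)$, which is strictly below $N=e_1(n,t)+2$ since $t_1+m=\Theta(n)$. This contradiction proves $H'$ is bipartite.

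The delicate point, and the reason Case~2 is not a verbatim copy of Case~1, is that here $xyz\notin\mathcal{T}_1$, so there is no critical vertex of $xyz$ sitting in $V''$ and hence no analogue of the $\Theta(m)$-saving \eqref{EQ:upperbound-criticalvtx-M} available inside group~$(a)$; the degree-saving $\tfrac{1}{5}(t_1+m)$ on its own does not close the gap. The argument instead rests on two features special to this case: the tight bound $h\le e_1(n,t)+t+6$ from Fact~\ref{fact1}, legitimate precisely because $\overline{t}_1>0$, and the linear saving $(t_1-2)$ from \eqref{EQ:T1-T4-edge-case-notin-T1}, which neutralizes the residual $+t$. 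I expect the main care to be bookkeeping: verifying that the group-$(a)$ and group-$(b)$ refinements modify genuinely disjoint terms of $h$, so that the two savings stack additively, and confirming that the trivial bounds used in group~$(a)$ are exactly the ones that recover the crude $h$-baseline through the displayed identity.
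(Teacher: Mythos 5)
Your proposal is correct and follows essentially the same route as the paper: reduce to a minimum-degree condition via Andr\'{a}sfai--Erd\H{o}s--S\'{o}s, use Mantel on $H'-w$ to extract the $\tfrac{1}{5}(t_1+m)$ saving in $e(\mathcal{T}_1)+e(\mathcal{T}_1,\mathcal{M})+e(\mathcal{M})$, stack it with the $(t_1-2)$ saving from \eqref{EQ:T1-T4-edge-case-notin-T1}, and close with the ``moreover'' bound $h\le e_1(n,t)+t+6$ of Fact~\ref{fact1}, which applies since $xyz\in\mathcal{T}\setminus\mathcal{T}_1$ forces $\overline{t}_1>0$. Your added remarks --- that the two refinements touch disjoint terms of $h$ and that the $\Theta(m)$ saving of Case~1 is unavailable here --- are accurate and make explicit what the paper leaves implicit in its ``by arguments analogous to'' step.
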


\begin{proof}
Recall that $H'=H[V'\cup V(\mathcal{M})]$ is triangle-free by Claim \ref{CLAIM:K3-free-V'-M}. Suppose to the contrary that $H'$ is not bipartite, then $\delta(H')\le \frac{4(t_1 + m)}{5}$ by 
Theorem \ref{THM:AES-Thm}. By arguments analogous to the proof of Claim \ref{CLAIM:K3-free-V'-M}, we have 
\begin{align*}
    e(\mathcal T_1)+e(\mathcal T_1,\mathcal M)+e(\mathcal M) \le7\binom{t_1}{2}+3t_1+4mt_1+m^2-\frac15(t_1+m).
\end{align*}

We update the bounds on $e(\mathcal T_1,\mathcal T_2\cup \mathcal T_3\cup\mathcal T_4)$ and $e(\mathcal T_1)+e(\mathcal T_1,\mathcal M)+e(\mathcal M)$ in $h$, replacing the earlier estimate given by~Lemma~\ref{LEM:upper-bound-each-part-2}~\ref{itm:t1>1-t1tj} and~Lemma~\ref{LEM:upper-bound-each-part-2}~\ref{itm:t1}~\ref{itm:mt1}~\ref{itm:m}, yields that
\begin{align*}
    |H | &\leq f + im + m^2 + (3 + 3m)t_4 + (2 + i)t_4 + \binom{3t_4}{2} - (t_1 - 2) - \frac{t_1 + m}{5} \\
    &\leq e_1(n,t) + t + 6 - (t_1 - 2) - \frac{1}{5}(t_1 + m) < N,
\end{align*}
where the second inequality follows from the moreover part of Fact \ref{fact1}. 
\end{proof}
We claim that  
\begin{align}\label{EQ:bound-T1-M-case-notin-T1}
    e(\mathcal T_1)+e(\mathcal T_1,\mathcal M)+e(\mathcal M)
    \geq 7\binom{t_1}{2}+3t_1 +m^2+4mt_1- 9;
\end{align}
otherwise, by Fact \ref{fact1} and \eqref{EQ:T1-T4-edge-case-notin-T1}, we have
\begin{align*}
    |H |
    &\le f+im+m^2+(3+3m)t_4+(2+i)t_4+\binom{3t_4}{2}-(t_1-2)-9 \\
    &\le e_1(n,t)+t+6-(t_1-2)-9
    < N,
\end{align*}
a contradiction. The last inequality follows from $t_1\ge t-1$.

Let $T'$ be the subgraph of $H$ induced by $V'\cup V''\cup V(\mathcal{M})$, with all edges between vertices in $V''$ removed. Then by Claim \ref{CLM:M-bipartite-case-xyz-notin-T1}, $T'$ is a $t_1\times(t_1+m)\times (t_1+m)$ tripartite graph. Clearly, $|T_2|\le t_1(2t_1+2m)+(t_1+m)^2$. On the other hand, by \eqref{EQ:bound-T1-M-case-notin-T1}, 
\begin{align*}
   |T'|
    & = e(\mathcal T_1)+e(\mathcal T_1,\mathcal M)+e(\mathcal M)-e(H[V''])\\
    & \ge 7\binom{t_1}{2}+3t_1 +m^2+4mt_1- 9-\binom{t_1}{2}\\
    & = t_1(2t_1+2m)+(t_1+m)^2-9.
\end{align*}
Thus, $T$ is obtained from a  $t_1\times(t_1+m)\times (t_1+m)$ complete tripartite graph by deleting at most 9 edges and thus it contains a copy of $K_{t_1-10, m + t_1-10, m + t_1-10}$. 
\end{proof}

\section{Proof of Theorem~\ref{THM:main-first-interval}}\label{SEC:PF-Main-result}
In this section, we prove Theorem~\ref{THM:main-first-interval}. Let $\delta>0$ be a constant given by Theorem~\ref{THM:Anti-ramsey-(t+2)K3-t-small}, $n\ge \min\{N_{\ref{THM:Anti-ramsey-(t+2)K3-t-small}}, N_{\ref{THM:STABILITY}}, N_{\ref{THM:tripartite-graph}}\}$ be a sufficiently large integer and $t\le \frac{1}{9}(2n-6)-2.$ By Theorem~\ref{THM:Anti-ramsey-(t+2)K3-t-small}, we may assume that $t\ge \delta n$.  Suppose to the contrary, there exists an edge-coloring surjective
$\chi\colon K_n\to[N]$ without any rainbow copy of $(t + 2)K_3$. Let $H$ be a representative graph under $\chi$. Then $|H |=N$. Combining Theorems~\ref{THM:STABILITY} and \ref{THM:tripartite-graph}, $H$ contains a copy of $K_{t_1-10, m+t_1-10, m+t_1-10}$. Recall that $t_1\ge t-1$ and $m\ge n/6-o(n)$. There exist  $V_1,V_2,V_3\subset V(H)$ with $|V_1| = t-11$ and $|V_2| = |V_3| = t + 28$, such that $H[V_1, V_2, V_3]$ is a copy of $K_{t-11, t + 28, t + 28}$ in $H$.
    
Let $H'\coloneqq H-V_1$. Then $v(H')=n-(t-11)$ and 
    \begin{align*}
        |H'|&\geq |H |-\left(\binom{|V_1|}{2}+|V_1|(n-|V_1|)\right)\\
        &=\left(\binom{t}{2}+t(n-t)+\left\lfloor\frac{(n-t)^2}{4}\right\rfloor+2\right)-\left(\binom{t-11}{2}+(t-11)(n-(t-11))\right)\\
        &=\binom{11}{2}+11(n-(t-11)-11)+\left\lfloor\frac{(n-t)^2}{4}\right\rfloor+2\\
        &= \mathrm{ex}(n-(t-11),12K_3)+2.
    \end{align*}
By Theorem \ref{THM:Anti-ramsey-(t+2)K3-t-small}, $H'$ contains a rainbow copy of $13K_3$, and we  denote by $C$ be the set of colors used on it. Then $|C|=39$. For $i=2,3$, if a vertex in $V_i$ is incident with an edge colored by any color in $C$, then we delete it from $V_i$, obtaining a subset  $V_i'\subseteq V_i$. It follows from $|V_i| = t + 28$ and $|C|=39$ that $H[V_1, V_2', V_3']$ contains a copy of $K_{t-11,t-11,t-11}$. This means that $H[V_1, V_2', V_3']$ contains a  rainbow copy of $(t-11)K_3$, which together with the rainbow copy of $13K_3$ in $H'$ gives a rainbow copy of $(t + 2)K_3$ in $K_n$. It completes the proof of Theorem~\ref{THM:main-first-interval}.

\section{Concluding remarks}\label{SEC:remark}
In the present work, we established Conjecture~\ref{Conj:Our-conjecture} for the first interval. In a subsequent work, we will extend this result to the second and third intervals. In particular, our results will show that in the first and second intervals, 
\begin{align*}
    \mathrm{ar}(n, (t+2)K_3) = \mathrm{ex}(n, (t+1)K_3) + 2.
\end{align*} 
while in the third interval, the difference between $\mathrm{ar}(n, (t+2)K_3)$ and $\mathrm{ex}(n, (t+1)K_3)$ is no longer a constant, namely, 
\begin{align*}
    \mathrm{ar}(n, (t+2)K_3)=\mathrm{ex}(n, (t+1)K_3) + n-2t.
\end{align*}

Let us briefly mention the difference between Conjecture~\ref{Conj:Our-conjecture} and the conjecture of Liu--Ning--Tian~{\cite[Conjecture~3]{LNT25}}.
The first, second, and fourth constructions in their conjecture are identical to the first, second, and fifth constructions in our conjecture.
Their third construction differs from our third construction in that the size of their set $X$ is smaller by one. 
Another difference is that their conjecture does not include the construction $E_{4}(n,t)$.

\section*{Acknowledgments}
J.H. was supported by the National Key R\&D Program of China (No.~2023YFA1010202) 
and by the Central Guidance on Local Science and Technology Development Fund of Fujian Province (No.~2023L3003). 
X.L. was supported by the Excellent Young Talents Program (Overseas) of the National Natural Science Foundation of China. 

\bibliographystyle{alpha}
\bibliography{antiRamsey}
\begin{appendix}
\section{Results concerning $tK_{3}$-free graphs}
%
Given an $n$-vertex $(t+1)K_3$-free graph $H$, let $(\mathcal{T},\mathcal{M},\mathcal{I})$ be the maximal tiling triple of $H$ with   an ideal partition $(\mathcal{T}_1,\mathcal{T}_2,\mathcal{T}_3,\mathcal{T}_4)$ of $\mathcal{T}$. The following give upper bounds on the number of edges in  each part and between them.
\begin{lemma}[{\cite[Lemma~4.2]{ABHP15}}]\label{LEM:upper-bound-each-part-1}
The following bounds hold.
\begin{enumerate}[label=\alph*)]
    \item $e(\mathcal I)=0.$\label{itm:i}
    \item $e(\mathcal I,\mathcal M)\le im.$\label{itm:im}
    \item $e(\mathcal M)\le m^2.$\label{itm:m}
    \item $e(\mathcal M,\mathcal T_1)\le 4mt_1$.\label{itm:mt1}
    \item $e(\mathcal I,\mathcal T_1)\le 2it_1$.\label{itm:it1}
    \item $e(\mathcal I,\mathcal T_1)\le 7\binom{t_1}{2}$.\label{itm:t1}
    \item $e(\mathcal I,\mathcal T_2)\le2it_2$.\label{itm:it2}
    \item $e(\mathcal T_2)\le8\binom{t_2}{2}+3t_2$.\label{itm:t2}
    \item $e\left(\mathcal{T}_{3}\right)+e\left(\mathcal{T}_{3}, \mathcal{T}_{4}\right) \leq 8\binom{t_3}{2}+8 t_{3} t_{4}+3 t_{3}.$\label{itm:t3+t3t4}
\end{enumerate}
\end{lemma}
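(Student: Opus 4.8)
The plan is to derive every bound from the \emph{lexicographic maximality} of the tiling triple $(\mathcal T,\mathcal M,\mathcal I)$ together with the $(t+1)K_3$-freeness of $H$. Ordering tiling triples by $(|\mathcal T|,|\mathcal M|,|\mathcal I|)$ lexicographically, no local rearrangement on a bounded vertex set may increase this vector; in particular $|\mathcal T|$ can never be pushed past $t$. Each inequality is then reduced to a \emph{per-pair} count: fix one component (a matching edge, an independent vertex, or a triangle) and one partner, bound the number of connecting edges, and sum. The internal $3t_2$ and $3t_3$ contributions in \ref{itm:t2} and \ref{itm:t3+t3t4} are the three edges of each triangle, so only the cross terms carry content.

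The degenerate bounds \ref{itm:i}--\ref{itm:m} follow from the principle that seeing both ends of a small structure manufactures a triangle. For \ref{itm:i}, $\mathcal I$ is independent by definition. For \ref{itm:im}, if $w\in\mathcal I$ is joined to both ends of $uv\in\mathcal M$ then $wuv$ is a triangle, and moving it into $\mathcal T$ (deleting $uv$ from $\mathcal M$ and $w$ from $\mathcal I$) raises $|\mathcal T|$; hence each $w$ meets each matching edge at most once, giving $im$. For \ref{itm:m}, three edges between two matching edges produce five edges on four vertices, hence a triangle, and the same swap raises $|\mathcal T|$; so at most two edges survive per pair and $e(\mathcal M)\le m+2\binom{m}{2}=m^2$.

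The harder bounds are the matching--triangle and triangle--triangle counts \ref{itm:mt1}, \ref{itm:it1}, \ref{itm:t1}, \ref{itm:it2}, \ref{itm:t2}, \ref{itm:t3+t3t4}, where a purely local argument on five or six vertices is insufficient, since so few vertices can never host an \emph{extra} disjoint triangle. Here I must exploit the defining ``seeing'' property of each class to supply an external witness. For $xyz\in\mathcal T_1$, witnessed by at least two matching edges that (by Claim~\ref{CLM:matching-sees-same-vertex}) all see the same critical vertex $x$, the augmentation behind \eqref{EQ:bound-edge-triangle-edge-T1-M} applies: if a matching edge $uv$ were joined to both ends of a second vertex $y$, then pairing $uvy$ with a witness triangle $u'v'x$ frees $z$ and creates one more triangle. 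This forces $e(uv,xyz)\le4$, i.e.\ \ref{itm:mt1}, and with an independent witness forces $e(w,xyz)\le2$, giving \ref{itm:it1} and, via the weaker $\mathcal T_2$ witnesses, \ref{itm:it2}. For two triangles in $\mathcal T_1$ the same mechanism yields Claim~\ref{CLM:structure-edge-between-T1}, cutting the nine cross edges of $K_{3,3}$ to seven and hence $e(\mathcal T_1)\le 7\binom{t_1}{2}+3t_1$ as in \ref{itm:t1} and \eqref{EQ:edge-between-triangles-T1}; for $\mathcal T_2$, whose witnesses are two triangles or a matching edge plus an independent vertex, the analogous count removes only one cross edge per pair, giving the eight of \ref{itm:t2}. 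Finally \ref{itm:t3+t3t4} is immediate from the greedy peeling defining $\mathcal T_3$: each triangle enters $\mathcal T_3$ only when it sends at most $8(|D|-1)$ edges to the triangles still in $D=\mathcal T_3\cup\mathcal T_4$, so summing over the removal order telescopes to $8\binom{t_3}{2}+8t_3t_4$, plus the $3t_3$ internal edges.

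The main obstacle will be making the augmentation arguments for \ref{itm:mt1}, \ref{itm:t1}, and \ref{itm:t2} fully rigorous: one must check that after each swap the freed vertices (the vertex $z$ above, and the partners of any consumed matching edges) can always be redistributed into $\mathcal M$ and $\mathcal I$ so that the result is again a valid tiling triple of strictly larger $|\mathcal T|$, and one must verify that the ``seeing'' hypotheses defining $\mathcal T_1$ and $\mathcal T_2$ do supply the required extra witness in every configuration. Once the per-pair constants $4$, $7$, and $8$ are in place, the stated global bounds follow by summation.
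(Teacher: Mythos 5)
This lemma is not proved in the paper at all: it is quoted verbatim as Lemma~4.2 of Allen--B\"ottcher--Hladk\'y--Piguet \cite{ABHP15} and used as a black box, so there is no in-paper argument to compare yours against. Measured against what the cited source actually does, your overall strategy is the right one: every bound is an augmentation argument against the lexicographically maximal tiling triple, with the ``seeing'' witnesses guaranteed by the definitions of $\mathcal T_1$ and $\mathcal T_2$ supplying the external vertices needed to turn a local configuration into a \emph{net} gain of a triangle. Items a), b), c), d), e) and i) are essentially correct as sketched (the telescoping for i) in particular is exactly right). Two side remarks: the redistribution worry you flag at the end is a non-issue, since any triangle tiling extends to a valid tiling triple by taking a maximal matching on the uncovered vertices and letting the remainder be the independent set, so increasing $|\mathcal T|$ alone already contradicts lexicographic maximality; and items e), f) of the printed statement both read ``$e(\mathcal I,\mathcal T_1)$'', but f) must be $e(\mathcal T_1)\le 7\binom{t_1}{2}+3t_1$, as the paper's later applications confirm --- you read through this typo correctly.

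The genuine gap is in items g) and h). You dispose of them with ``the analogous count removes only one cross edge per pair, giving the eight,'' but the analogy does not transfer. A triangle of $\mathcal T_2$ is witnessed either by one matching edge together with one independent vertex, or by two members of $\mathcal T$; in the first case the two witnesses may be forced to attach at overlapping vertices of the triangle (otherwise they alone would already yield an augmentation), and in the second case using a witness triangle in a swap consumes a member of $\mathcal T$, so the net triangle count does not obviously increase. Concretely, if $T_1,T_2\in\mathcal T_2$ carried all nine cross edges, the six vertices of $T_1\cup T_2$ can never host three disjoint triangles, and the straightforward moves with a matching-edge-plus-independent-vertex witness (e.g.\ $uvx_1$, $y_1x_2y_2$, and then $z_1z_2$ plus an unknown common neighbour) do not close; ruling out the ninth cross edge, and likewise ruling out an $\mathcal I$-vertex adjacent to all three vertices of a $\mathcal T_2$-triangle, is precisely where the case analysis of the source lemma lives, and it is absent from your sketch. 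Until that analysis is supplied, the per-pair constants $8$ in item h) and $2$ in item g) are asserted rather than proved.
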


\medskip

\begin{lemma}[{\cite[Lemma~4.3]{ABHP15}}]\label{LEM:upper-bound-each-part-2}
The following bounds hold.
\begin{enumerate}[label=\alph*)]
     \item If $t_{1} \neq1$ and $j \ge2$, then $e(\mathcal T_{1}, \mathcal T_{j}) \le7 t_{1} t_{j}.$\label{itm:t1>1-t1tj}
     \item If $t_{2} \neq1$ and $j\ge3$, then $e(\mathcal T_{2}, \mathcal T_{j}) \le8 t_{2} t_{j}.$\label{itm:t2>1-t2tj}
\end{enumerate}
\end{lemma}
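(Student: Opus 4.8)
The plan is to deduce both inequalities from a uniform per-pair estimate and then sum over pairs of triangles. Concretely, it suffices to prove that $e(A,B)\le 7$ whenever $A\in\mathcal T_1$ and $B\in\mathcal T_j$ with $j\ge 2$, and that $e(A,B)\le 8$ whenever $A\in\mathcal T_2$ and $B\in\mathcal T_j$ with $j\ge 3$; summing the first over the $t_1t_j$ pairs and the second over the $t_2t_j$ pairs yields the stated bounds. Since two vertex-disjoint triangles span at most a $K_{3,3}$ between them, the trivial bound is $e(A,B)\le 9$, so the entire content is to forbid, for each such pair, the one or two ``extra'' crossing edges. This mirrors the per-pair statement already proved in the main text as Claim~\ref{CLM:structure-edge-between-T1} for the case $A,B\in\mathcal T_1$, and the goal is to extend that mechanism to an arbitrary second part.

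The common engine is an augmenting rotation that converts matching edges (or seeing triangles) into triangles of the packing, thereby absorbing the distinguished vertices of $A$ and freeing cheap vertices to build one extra triangle, contradicting $(t+1)K_3$-freeness. For $A=xyz\in\mathcal T_1$ one uses that $A$ is seen by at least two members of $\mathcal M$, all of which see the same critical vertex $x$ (the analogue of Claim~\ref{CLM:matching-sees-same-vertex}). Absorbing $x$ into a matching edge $u_1v_1$ replaces $A$ by the triangle $u_1v_1x$ and liberates the non-critical pair $y,z$. Now if $e(A,B)\ge 8$, then at most one crossing edge is missing; since $x$ contributes at most $3$ crossing edges, the pair $\{y,z\}$ sends at least $8-3=5$ edges into $B$, and a counting argument ($|N_B(y)|+|N_B(z)|\ge 5$ with $|B|=3$) forces a common neighbour $w\in V(B)$, so $yzw$ is a triangle. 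The hypothesis $t_1\neq 1$ enters precisely here: a second triangle $A'\in\mathcal T_1$ with one of its own matching edges supplies the additional absorbing capacity needed to reassemble the remaining two vertices of $B$ together with the freed non-critical vertices of $A'$ into one further triangle. The bookkeeping then turns the three packing triangles $A,A',B$ together with two matching edges into four vertex-disjoint triangles, producing a forbidden $(t+1)K_3$; running the rotation with only a single surplus crossing edge shows that the threshold is exactly $7$.

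For $A\in\mathcal T_2$ and $B\in\mathcal T_j$ with $j\ge 3$ the scheme is parallel but the absorbing structure is weaker: by definition a $\mathcal T_2$-triangle is only guaranteed to be seen by two other members of $\mathcal T$, or by one member of $\mathcal M$ together with one member of $\mathcal I$. One absorbs the distinguished vertex of $A$ using a seeing triangle, or the matching-edge/independent-vertex pair, which frees fewer cheap vertices and lets one preclude only a \emph{single} surplus crossing edge; this is exactly why the bound relaxes from $7$ to $8$. Here $t_2\neq 1$ again guarantees a second $\mathcal T_2$-triangle to close the rotation, and the restriction $j\ge 3$ ensures $B\notin\mathcal T_1\cup\mathcal T_2$, so that $B$ imposes no competing seeing obligations that could be disrupted by the reshuffle.

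The main obstacle is the vertex-disjointness-preserving case analysis inside each rotation. One must check, over all patterns of the at most one (respectively two) missing crossing edges and all ways the liberated vertices attach to $B$, that the reconstructed triangles $u_1v_1x$, $yzw$, and the triangle built from the auxiliary part are genuinely pairwise disjoint and genuinely present in $H$, and that the auxiliary triangle $A'$ (or the seeing triangle for the $\mathcal T_2$ case) can be chosen disjoint from $B$ with an unused matching edge still available. The boundary cases $t_1=1$ and $t_2=1$ are excluded exactly because there the second absorbing triangle may fail to exist, and it is this loss of auxiliary capacity—not any change in the local edge count—that breaks the rotation.
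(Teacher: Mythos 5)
The paper does not actually prove Lemma~\ref{LEM:upper-bound-each-part-2}: it is imported verbatim from \cite[Lemma~4.3]{ABHP15}, so there is no in-paper argument to measure your proposal against, and I can only assess the proposal itself. It has a genuine gap at its central step. Your rotation for part~(a) produces the triangles $u_1v_1x$ (absorbing the critical vertex of $A$), $u'v'x'$ (absorbing the critical vertex of $A'$), and $yzw$ (from a common neighbour $w\in V(B)$ of the non-critical pair of $A$), and then needs a fourth triangle inside $\bigl(V(B)\setminus\{w\}\bigr)\cup\{y',z'\}$, where $y',z'$ are the non-critical vertices of $A'$. Nothing forces even a single edge between $V(A')$ and $V(B)$ --- $A'$ is an arbitrary second member of $\mathcal T_1$ --- so this fourth triangle simply need not exist, and without it you have converted three packing triangles and two matching edges into only three triangles: no net gain, hence no contradiction with $(t+1)K_3$-freeness or with the lexicographic maximality of $(\mathcal T,\mathcal M,\mathcal I)$. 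You flag this as ``the main obstacle'' but do not resolve it, and it is not a bookkeeping issue: the same difficulty already appears for a single pair with all nine crossing edges present, where $u_1v_1x$ and $yza$ leave $\{b,c,u_2,v_2\}$ with no guaranteed triangle.

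Beyond that, the reduction to a uniform per-pair bound is itself doubtful. If $e(A,B)\le 7$ held for every pair whenever $t_1\ge 2$, the second triangle $A'$ would have to act as a local gadget for the pair $(A,B)$; but, as above, it carries no usable adjacency to $B$. Contrast this with Claim~\ref{CLM:structure-edge-between-T1} in the present paper, where the per-pair bound for two $\mathcal T_1$-triangles works precisely because both triangles own absorbing matching edges and the extra triangle $z_1z_2y_2$ is built entirely from the two triangles themselves; for $B\notin\mathcal T_1$ that mechanism is unavailable. The natural reading of the hypothesis $t_1\neq 1$ is therefore that $7t_1t_j$ is an aggregate bound in which an exceptional pair with eight or nine crossing edges is offset by deficits at the other $\mathcal T_1$-triangles against the same $B$ --- which requires those other triangles to exist. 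Your summation step presupposes exactly the uniform pairwise statement that this reading contradicts. A correct proof must either carry out such a compensation argument or exploit the definitions of $\mathcal T_2,\mathcal T_3,\mathcal T_4$ (which objects see $B$, and at which vertices) far more specifically than your sketch does; the same criticism applies to part~(b), where ``absorbing the distinguished vertex of a $\mathcal T_2$-triangle'' is not even well defined until the two alternative seeing configurations in the definition of $\mathcal T_2$ are analysed separately.
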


\medskip

\begin{lemma}[{[Lemma~4.4]\cite{ABHP15}}]\label{LEM:upper-bound-each-part-3}
    The following bounds hold.
    \begin{enumerate}[label=\alph*)]
        \item \label{itm:t1t2}
     \begin{align*}
    e\left(\mathcal{T}_{1}, \mathcal{T}_{2}\right) + e\left(\mathcal{M}, \mathcal{T}_{2}\right) \leq 
    \begin{cases} 
    7t_{1}t_{2} + (2 + 3m)t_{2} & \text{if } m \geq 1, \\ 
    0 & \text{if } m = 0.
    \end{cases} 
    \end{align*}
    \item\label{itm:t1tj} If $j=3,4$, then 
     \begin{align*}
    e\left(\mathcal{T}_{1}, \mathcal{T}_{j}\right)+e\left(\mathcal{M}, \mathcal{T}_{j}\right) \leq 
    \begin{cases} 
    7 t_{1} t_{j}+(3+3 m) t_{j} & \text{if } m \geq 1, \\ 
    0 & \text{if } m=0.
    \end{cases} 
    \end{align*}
    \item\label{itm:t2tj} If $j=3,4$, then 
     \begin{align*}
    e\left(\mathcal{T}_{2}, \mathcal{T}_{j}\right)+e\left(\mathcal{I}, \mathcal{T}_{j}\right) \leq
    \begin{cases} 
    8 t_{2} t_{j}+(2+i) t_{j} & \text{if } i \geq 1, \\ 
    0 & \text{if } i=0.
    \end{cases} 
    \end{align*} 
    For $ t_2 = 1 $ and any $ T\in \mathcal{T}_3\cup\mathcal T_4 $, we have $ e(\mathcal{I}, T) \leq i $ if $ e(\mathcal{T}_2, T) = 9 $, and $ e(\mathcal{I}, T) \leq i + 2 $ if $ e(\mathcal{T}_1, T) \leq 8 $.  
    \end{enumerate}
\end{lemma}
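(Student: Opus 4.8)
The plan is to derive all three bounds from a single mechanism: a local \emph{augmenting rearrangement} of the tiling triple that contradicts its lexicographic maximality. Recall that $(\mathcal{T},\mathcal{M},\mathcal{I})$ maximizes $|\mathcal{T}|$ first; hence any configuration producing $|\mathcal{T}|+1$ pairwise disjoint triangles on the same vertex set is forbidden. In each of (a), (b), (c) the claimed right-hand side has the shape (triangle--triangle term) $+$ (per-member term)$\,\cdot t_j$. The triangle--triangle terms $7t_1t_j$ and $8t_2t_j$ are exactly Lemma~\ref{LEM:upper-bound-each-part-2}\ref{itm:t1>1-t1tj} and~\ref{itm:t2>1-t2tj}, so the real content is to bound, for a single triangle $T\in\mathcal{T}_j$, the number of edges it sends to $\mathcal{M}$ (in (a),(b)) or to $\mathcal{I}$ (in (c)), and then to check that the two estimates add without double counting.

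First I would record the per-object contributions. If a matching edge $uv\in\mathcal{M}$ does \emph{not} see $T=x'y'z'$, then $u$ and $v$ have no common neighbour in $T$, so each vertex of $T$ is hit by at most one of $u,v$ and $e(uv,T)\le 3$; symmetrically, an independent vertex $w\in\mathcal{I}$ that does not see $T$ is adjacent to at most one vertex of $T$, giving $e(w,T)\le 1$. The point is that a triangle $T\in\mathcal{T}_j$ with $j\ge 2$ lies outside $\mathcal{T}_1$, so by the definition of $\mathcal{T}_1$ it is seen by \emph{at most one} matching edge; likewise a triangle with $j\ge 3$ lies outside $\mathcal{T}_1\cup\mathcal{T}_2$ and is therefore seen by at most one independent vertex (being seen by a matching edge together with an independent vertex, or by two triangles, would force $T\in\mathcal{T}_2$). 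Thus only one seeing object per triangle can exceed the cheap bound, and the remaining $m-1$ edges of $\mathcal{M}$ (resp.\ $i-1$ vertices of $\mathcal{I}$) contribute at most $3(m-1)$ (resp.\ $i-1$).

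It remains to bound the contribution of the single seeing object, and here the augmentation enters. Suppose a matching edge $uv$ sees $T=x'y'z'$ at $x'$ with all six cross edges present, so $\{u,v,x',y',z'\}$ spans $K_5$. If in addition $T$ is seen by an independent vertex $w$ (the situation for $T\in\mathcal{T}_2$, whose second ``seer'' may be taken to be an $\mathcal{I}$-vertex), say $wx',wy'\in H$, then $\{w,x',y'\}$ and $\{u,v,z'\}$ are disjoint triangles, converting the original $T$ together with $uv$ and $w$ into two triangles and strictly increasing $|\mathcal{T}|$ --- a contradiction. Hence for $T\in\mathcal{T}_2$ the seeing edge sends at most $5$ edges, giving $e(T,\mathcal{M})\le 3(m-1)+5=3m+2$ and the term $(2+3m)t_2$ in (a); the same augmentation shows $e(\mathcal{I},T)\le(i-1)+3=i+2$ for the independent side in (c). For $T\in\mathcal{T}_3\cup\mathcal{T}_4$ there is no guaranteed second seer with which to pair off $z'$, so the full $K_5$ cannot be excluded and the seeing edge may send $6$, yielding $e(T,\mathcal{M})\le 3(m-1)+6=3m+3$ and the looser term $(3+3m)t_j$ of (b). Summing the per-member bounds over the $t_j$ triangles of $\mathcal{T}_j$ and adding Lemma~\ref{LEM:upper-bound-each-part-2} gives (a), (b), (c).

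Finally I would dispatch the boundary and refined cases. When $m=0$ we have $\mathcal{M}=\emptyset$, and since every $\mathcal{T}_1$-triangle must be seen by matching edges, $\mathcal{T}_1=\emptyset$ as well, so both summands in (a) and (b) vanish; when $i=0$ the corresponding argument handles (c). The sharpening for $t_2=1$ couples the two seers of a triangle $T\in\mathcal{T}_3\cup\mathcal{T}_4$: if the unique $\mathcal{T}_2$-triangle is completely joined to $T$ (so $e(\mathcal{T}_2,T)=9$), then $T$ cannot also be seen by an independent vertex, for otherwise an augmentation analogous to the one above (using the nine cross edges to the $\mathcal{T}_2$-triangle in place of the $K_5$) would again produce an extra triangle; this forces $e(\mathcal{I},T)\le i$, while if $e(\mathcal{T}_2,T)\le 8$ the generic bound $e(\mathcal{I},T)\le i+2$ still applies. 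The main obstacle throughout is bookkeeping: I must verify that the seeing object used to cap a triangle's edges to $\mathcal{M}$ or $\mathcal{I}$ is not simultaneously required to realize a triangle--triangle bound from Lemma~\ref{LEM:upper-bound-each-part-2}, i.e.\ that the reserves consumed by the various augmentations can be chosen vertex-disjoint, which is precisely what makes the additive combination of the two estimates legitimate.
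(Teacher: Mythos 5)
The paper gives no proof of this lemma at all --- it is quoted verbatim from~\cite{ABHP15} (their Lemma~4.4) --- so your attempt can only be judged on its own merits. Much of it is sound: a non-seeing $\mathcal{M}$-edge sends at most $3$ edges to a triangle and a non-seeing $\mathcal{I}$-vertex at most $1$; any $T\notin\mathcal{T}_1$ is seen by at most one $\mathcal{M}$-edge; the full-$K_5$ augmentation capping the seeing edge at $5$ for $T\in\mathcal{T}_2$ is correct; and your treatment of the $m=0$, $i=0$ boundary cases and of the $t_2=1$ refinement in (c) is essentially right.

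The genuine gap is your treatment of the triangle--triangle terms. You import $e(\mathcal{T}_1,\mathcal{T}_j)\le 7t_1t_j$ and $e(\mathcal{T}_2,\mathcal{T}_j)\le 8t_2t_j$ from Lemma~\ref{LEM:upper-bound-each-part-2}, but that lemma carries the hypotheses $t_1\neq 1$ and $t_2\neq 1$, which you never verify and which can fail: when $t_1=1$, the unique triangle $T'=x_1y_1z_1\in\mathcal{T}_1$ may send up to $9$ edges to some $T=xyz\in\mathcal{T}_j$ (a full join is consistent with maximality), so the two summands cannot be bounded separately and the \emph{sum} must be bounded jointly --- which is exactly why the lemma is stated as a bound on the sum, and why part (c) carries an explicit $t_2=1$ clause. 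The missing argument is that dense attachment of $T'$ suppresses the matching contribution: if $e(T',T)\ge 8$, then no $\mathcal{M}$-edge $uv$ can see $T$, since seeing at $x$ yields the triangle $uvx$, at most one of the four pairs in $\{y_1,z_1\}\times\{y,z\}$ is a non-edge so some $p\in\{y_1,z_1\}$ gives the triangle $pyz$, and one of the two $\mathcal{M}$-edges seeing $T'$ at $x_1$ (chosen distinct from $uv$) gives a third disjoint triangle, replacing two triangles by three and contradicting maximality; hence $e(T',T)+e(\mathcal{M},T)\le\max\{9+3m,\ 8+3m,\ 7+3m+3\}=7+(3+3m)$, recovering (b) (and (a) analogously, with the $\mathcal{I}$-seer capping the seeing edge at $5$ when $e(T',T)=7$). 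Your closing paragraph flags this interaction but does not resolve it. A secondary caveat: your step ``at most one $\mathcal{I}$-vertex sees $T\in\mathcal{T}_3\cup\mathcal{T}_4$'' (and the guaranteed $\mathcal{I}$-seer for a $\mathcal{T}_2$-triangle seen by an $\mathcal{M}$-edge) is valid only if the definition of $\mathcal{T}_2$ reads ``seen by at least two members of $\mathcal{I}$, or by one member of $\mathcal{M}$ and at least one of $\mathcal{I}$,'' as in~\cite{ABHP15}; under the paper's literal wording (``two members in $\mathcal{T}$''), three $\mathcal{I}$-vertices joined to the same pair of a $\mathcal{T}_3$-triangle are consistent with lexicographic maximality and give $e(\mathcal{I},T)=i+3$, so you have tacitly (and correctly) adopted the intended definition, but this should be made explicit.
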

 
\end{appendix}

\newpage

\section*{\normalfont Authors}

\begin{multicols}{2}
\begin{flushleft}

\vbox{%
Jinghua Deng \\
{\small Center for Discrete Mathematics} \\
{\small Fuzhou University} \\
{\small Fujian, 350003, China} \\
\texttt{Jinghua\_deng@163.com}
}

\vspace{0.7cm}

\vbox{%
Caiyun Hu \\
{\small Center for Discrete Mathematics} \\
{\small Fuzhou University} \\
{\small Fujian, 350003, China} \\
\texttt{hucaiyun.fzu@gmail.com}
}

\vspace{0.7cm}

\vbox{%
Jianfeng Hou \\
{\small Center for Discrete Mathematics} \\
{\small Fuzhou University} \\
{\small Fujian, 350003, China} \\
\texttt{jfhou@fzu.edu.cn}
}

\vspace{0.7cm}

\vbox{%
Xizhi Liu \\
{\small School of Mathematical Sciences} \\
{\small University of Science and Technology of China} \\
{\small Hefei, 230026, China} \\
\texttt{liuxizhi@ustc.edu.cn}
}

\end{flushleft}
\end{multicols}
\end{document}